\documentclass[12pt]{article}
\usepackage{listings}
\lstdefinestyle{mystyle}{
    showtabs=false,                  
    tabsize=3
}\lstset{style=mystyle}

\usepackage{currfile,datetime}
\usepackage{amsthm,amsmath,amssymb,bbm,geometry,epsfig,listings,
paralist,enumerate,comment,nicefrac,verbatim,multirow,xcolor,wasysym,tikz}
\usepackage{caption}
\usepackage{subcaption}
\usepackage[pagewise]{lineno}[4.41]
\usepackage{mathrsfs}  
\usepackage{marginnote}\reversemarginpar
\usepackage[linktocpage,colorlinks,hidelinks,linkcolor=red,citecolor=green]{hyperref}
\usepackage{soul}


\geometry{a4paper,top=15mm,left=20mm,right=20mm,bottom=20mm,headsep=2mm,footskip=8mm}
\usepackage[capitalise,compress]{cleveref} 


\crefname{equation}{}{}

\newtheorem{lemma}{Lemma}[section]

\newtheorem{theorem}[lemma]{Theorem}

\newtheorem{corollary}[lemma]{Corollary}

\theoremstyle{definition}
\newtheorem{example}[lemma]{Example}

\crefname{subsection}{Subsection}{Subsections}
\crefname{enumi}{item}{items}
\creflabelformat{enumi}{(#2\textup{#1}#3)}

\newcommand{\1}{\ensuremath{\mathbbm{1}}}
\providecommand{\N}{{\ensuremath{\mathbbm{N}}}}
\providecommand{\Z}{{\ensuremath{\mathbbm{Z}}}}
\providecommand{\R}{{\ensuremath{\mathbbm{R}}}}

\providecommand{\E}{{\ensuremath{\mathbbm{E}}}}

\renewcommand{\P}{{\ensuremath{\mathbbm{P}}}}
\renewcommand{\gets}{\curvearrowleft}
\newcommand{\F}{{\ensuremath{\mathbbm{F}}}}

\newcommand{\procX}{X}
\newcommand{\setS}{\mathbbm{S}}

\newcommand{\funcPi}{\iota}
\newcommand{\setSB}{\widetilde{\mathbbm{S}}}
\newcommand{\rdown}[2]{#2(#1)}
\newcommand{\size}[1]{\lvert #1 \rvert}

\newcommand{\totalD}{\mathsf{D}}

\newcommand{\exponentLP}{p}

\newcommand{\tnorm}[2]{{\left\vert\kern-0.25ex\left\vert\kern-0.25ex\left\vert #1     \right\vert\kern-0.25ex\right\vert\kern-0.25ex\right\vert}_{#2}}

\newcommand{\barC}{\bar{c}}

\newcommand{\xeqref}[1]{}

\title{Strong convergence rate of Euler--Maruyama approximations in temporal-spatial H\"older-norms}

\author
{Martin Hutzenthaler$^{1}$ \\
 Tuan Anh Nguyen$^{2}$\bigskip\\
\small{$^1$ Faculty of Mathematics, University of Duisburg-Essen,}\\
\small{Essen, Germany; e-mail: \texttt{martin.hutzenthaler}\textcircled{\texttt{a}}\texttt{uni-due.de}}\\
\small{$^2$ Faculty of Mathematics, University of Duisburg-Essen,}\\
\small{Essen, Germany; e-mail: \texttt{tuan.nguyen}\textcircled{\texttt{a}}\texttt{uni-due.de}}
}

\AtBeginDocument{%
\sloppy
\allowdisplaybreaks
}

\begin{document}

\maketitle
\begin{abstract}
 Classical approximation results for stochastic differential equations
 analyze the $L^p$-distance between the exact solution and its Euler--Maruyama approximations.
 In this article we measure the error with temporal-spatial H\"older-norms.
 Our motivation for this are multigrid approximations of the exact solution
 viewed as a function of the starting point.
 We establish the classical strong convergence rate $0.5$ with respect to
 temporal-spatial H\"older-norms
 if the coefficient functions have bounded derivatives of first and second order.
\end{abstract}
\tableofcontents
{%
\makeatletter
\let\@makefnmark\relax
\let\@thefnmark\relax
\@footnotetext{\emph{Key words and phrases:}
stochastic differential equation, strong convergence, Euler--Maruyama approximation,  Lipschitz condition, Lyapunov function,
curse of dimensionality, high-dimensional SDEs, 
high-dimensional BSDEs, multilevel Picard approximations, multilevel Monte Carlo method. }
\@footnotetext{\emph{AMS 2010 subject classification}:Primary 60H35; Secondary 65C05, 65C30.} 
\makeatother
}%

\section{Introduction}
Stochastic differential equations are typically not explicitly solvable
and need to be approximated numerically.
Classical results on strong convergence rates assume the SDE coefficients to
be globally Lipschitz continuous; see, e.g., \cite{KP95,Mao07}.
In the last decade, strong convergence rates were also established
in the case of non-globally Lipschitz coefficients.
We refer, e.g., to
 \cite{HutzenthalerJentzen2014Memoires,HJ20,HutzenthalerJentzenKloeden2012,Sabanis2013ECP,Sabanis2016}
for the case of locally Lipschitz coefficients with polynomial growth
where Euler approximations diverge in the strong and weak sense; see \cite{hjk11,HutzenthalerJentzenKloeden2013}.
Moreover, we refer, e.g., to 
\cite{BaoHuangZhang2020,DareiotisGerencserLe2021,LeLing2021,LeobacherSzoelgyenyi2016,NeuenkirchSzoelgyenyi2021,NeuenkirchEtAl2019,NgoTaguchi2017}
for the case of discontinuous drift coefficients.
The error is measured in all of these approximation results as $L^p$-distance between the
exact solution and the approximations for at least one $p\in[1,\infty)$.

In this article we measure the error with temporal-spatial H\"older-norms.
Our motivation for considering spatial H\"older-norms
is that in a number of applications we need to approximate the solution in several
starting points, e.g., in a domain or a submanifold.
It is inefficient to approximate the exact solution (as a function of the starting point)
by interpolating over a fine subgrid. More efficient is to apply multigrid approximations
and to exploit the spatial regularity of the approximation processes.
To give a specific example, the (first component of the) solution of a backward stochastic
differential equation (BSDE) can be written as $(u(t,X_t))_{t\in[0,1]}$ where $X$ is the forward process
 and where $u$ solves a backward partial differential equation (PDE).
Assuming that the PDE is linear, we can approximate $u$ by Monte Carlo Euler approximations
$(U_n)_{n\in\N}$. It is inefficient to approximate $(u(t,X_t))_{t\in[0,1]}$ 
by an affine-linear interpolation of $(U^n(k2^{-n},X_{k2^{-n}}))_{k\in\{0,1,\ldots,2^n\}}$, $n\in\N$.
More efficient than this are multigrid approximations
\begin{equation}  \begin{split}
  \sum_{\ell=0}^{n-1}\Bigl[
  \mathscr{L}\big((U^{n-\ell}(k2^{-\ell-1},X_{k2^{-\ell-1}}))_{k\in\{0,1,\ldots,2^{\ell+1}\}}\big)
  -\1_{\N}(\ell)\mathscr{L}\big((U^{n-\ell}(k2^{-\ell},X_{k2^{-\ell}}))_{k\in\{0,1,\ldots,2^\ell\}}\big)
  \Bigr]
\end{split}     \end{equation}
where $\1_{\N}$ is the indicator function of $\N=\{1,2,\ldots\}$ and
for every
$N\in \N$, $a\colon  \{0,1,,\ldots,N\}\to \R$
we denote by
$\mathscr{L}(a)\in C([0,1],\R)$ the continuous function which satisfies for all
 $k\in\{0,1,\ldots,N-1\} $, $t\in [\frac{k}{N},\frac{k+1}{N}]$ that
$
(\mathscr{L}(a))(t)=a_k(k+1-Nt)+a_{k+1}(Nt-k)
$;
 see \cite[Theorem 2.3]{hjkn2021BSDE}
for more details and cf.\ also \cite{h98,h01}.
For the analysis of these multigrid approximations, however, we need to understand the temporal-spatial
regularity of Monte Carlo Euler approximations.
If the PDE is nonlinear and high-dimensional, then we can approximate its solution
by multilevel Picard approximations;
see, e.g., \cite{EHutzenthalerJentzenKruse2016,HJK+18,hjk2021overcoming}.
Also in this case we need to understand regularity of Euler--Maruyama approximations considered as functions of the starting point.

The following \cref{a01} illustrates the main result of this article and proves
that Euler--Maruyama approximations converge with strong convergence rate $0.5$ also in spatial
$1$-H\"older norms under suitable assumptions. The central assumption of \cref{a01} is that
the coefficient functions $\mu$ and $\sigma$ are twice continuously differentiable and that
the derivatives of first and second order are bounded.
In fact, it suffices to assume a weaker assumption on the coefficient functions, namely
 there exists $c\in\R$ such that for all $\zeta\in\{\mu,\sigma\}$, $x,y\in\R^d$ it holds that
\begin{equation} \begin{split}\label{eq:second.order}
&
\left\lVert 
(\zeta(x)-\zeta (y))-
(\zeta(\tilde{x})-\zeta (\tilde{y}))\right\rVert 
\leq c
\left\lVert 
(x-y)-(\tilde{x}-\tilde{y})\right\rVert 
+c
\left(\left\lVert x-y\right\rVert 
+\left\lVert \tilde{x}-\tilde{y}\right\rVert 
\right)\lVert x-\tilde{x}\rVert,
\end{split} \end{equation}
cf.\ \cref{s01b} for details.
So global Lipschitz continuity of the coefficient functions is sufficient
for Euler--Maruyama approximations to converge
with strong convergence rate $0.5$ whereas condition \eqref{eq:second.order} is sufficient
to obtain strong convergence rate $0.5$ with respect to spatial H\"older norms.
Clearly, \eqref{eq:second.order} implies global Lipschitz continuity of $\mu$ and $\sigma$.
We believe, however, that condition \eqref{eq:second.order} is not necessary for
\eqref{eq:hoelder.intro} to hold.
It would be interesting to prove convergence rates in strong spatial H\"older norms
in the situations considered in the articles mentioned in the first paragraph, that is,
when the coefficients are only locally Lipschitz continuous or when the
drift coefficient is only measurable (plus suitable additional assumptions).

\begin{theorem}\label{a01}
Let $d\in\N$, 
$T,p\in (0,\infty)$,
let $\lVert\cdot\rVert \colon \R^d \to [0,\infty) $ be a norm,
let 
$\mu\in C^2(\R^d,\R^d) $, $ \sigma  \in C^2(\R^d,\R^{d\times d})$ 
have bounded first and second order derivatives,
let $(\Omega,\mathcal{F},\P, (\F_t)_{t\in[0,T]})$ be a filtered probability space which satisfies the usual conditions\footnote{Let $T \in (0,\infty)$ and let ${\bf \Omega} = (\Omega,\mathcal{F},\P, (\F_t)_{t\in[0,T]})$ be 
a filtered probability space. 
Then we say that ${\bf \Omega}$
satisfies the usual conditions if and only if 
it holds for all $t \in [0,T)$ that $\{ A\in \mathcal F: \P(A)=0 \} \subseteq \F_t 
= \cap_{ s \in (t,T] } \F_s$.},
let $(W_t)_{t\in[0,T]}\colon [0,T]\times\Omega\to\R^{d}$ be a standard
$(\F_t)_{t\in[0,T]}$-Brownian motion with continuous sample paths, and for every $n\in \N$, $x\in \R^d$ let 
${Y}_{k}^{n,x}\colon\Omega \to \R^d $, $k\in \{0,\ldots,n\}$,
satisfy for all $k\in  \{0,\ldots,n-1\}$ that
${Y}_{0}^{n,x}=x$ and 
$
{Y}_{k+1}^{n,x}= {Y}_{k}^{n,x} + 
\mu({Y}_{k}^{n,x}) \tfrac{T}{n}
+\sigma({Y}_{k}^{n,x})( W_ {\frac{(k+1)T}{n}}-
W_ {\frac{kT}{n}}).
$
Then
\begin{enumerate}[i)]
\item\label{a01a} 
for every $x\in\R^d$ there exists a unique adapted stochastic process with continuous sample paths
 $(X^{x}_t)_{t\in[0,T]} \colon [0,T]\times\Omega\to\R^d $  
  such that
for all $t\in[0,T]$ it holds a.s.\ that $X_t^{x}=x+\int_{0}^{t}\mu(X_s^{x})\,ds+\int_{0}^{t}\sigma(X_s^{x})\,dW_s$ and
\item\label{a01b} there exists $C\in[0,\infty)$ which satisfies  
for all $n\in \N$,
$k\in \{0,1,\ldots,n\}$, $x\in\R^d$, $y\in \R^d\setminus \{x\}$ that
\begin{align}\begin{split}\label{eq:hoelder.intro}
\frac{ \left(\E \!\left[\bigl\lVert X_{\frac{kT}{n}}^x- {Y}_{k}^{n,x}\bigr\rVert^p\right]\right)^{1/p}}{1+\lVert x\rVert}
+
\frac
{ \left(\E \!\left[\bigl\lVert 
\bigl( X_{\frac{kT}{n}}^x- {Y}_{k}^{n,x}\bigr)
-\bigl( X_{\frac{kT}{n}}^y- {Y}_{k}^{n,y}\bigr)
\bigr\rVert^p\right]\right)^{1/p}}{\lVert x-y\rVert\bigl(1+\lVert x\rVert+\lVert y\rVert\bigr)}\leq \frac{C}{\sqrt{n}}.
\end{split}\end{align}
\end{enumerate}
\end{theorem}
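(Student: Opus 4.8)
The plan is to prove \cref{a01} by carrying out the classical strong-convergence analysis of the Euler--Maruyama scheme twice in parallel: once for a single starting point, which yields the first summand of \eqref{eq:hoelder.intro} (the familiar rate $1/2$), and once for the difference of two starting points, which yields the second summand. The mechanism coupling the two analyses is the second-order estimate \eqref{eq:second.order}, which by a Taylor expansion is implied by the assumed boundedness of the first and second derivatives of $\mu$ and $\sigma$ --- up to an additional term of the form $c(\lVert x-y\rVert+\lVert\tilde x-\tilde y\rVert)\lVert(x-y)-(\tilde x-\tilde y)\rVert$ which will be of lower order in every instance below and absorbed without further comment. Throughout write $t_k=kT/n$, let $\lfloor s\rfloor$ denote the largest grid point $\le s$, and let $\bar Y^{n,x}$ be the time-continuous Euler interpolation, i.e.\ the adapted process with $\bar Y^{n,x}_{t_k}=Y^{n,x}_k$ for all $k$ and $\bar Y^{n,x}_t=x+\int_0^t\mu(\bar Y^{n,x}_{\lfloor s\rfloor})\,ds+\int_0^t\sigma(\bar Y^{n,x}_{\lfloor s\rfloor})\,dW_s$.

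The first assertion of \cref{a01} is classical: bounded first derivatives make $\mu$ and $\sigma$ globally Lipschitz and of at most linear growth, so the standard It\^o existence-and-uniqueness theorem applies. For the second assertion I would first record, by routine Gronwall and Burkholder--Davis--Gundy arguments and uniformly in $n\in\N$ and $x,y\in\R^d$, a priori bounds in every $L^q$-norm, $q\in[2,\infty)$: of order $1+\lVert x\rVert$ for $\sup_{t\le T}\lVert X^x_t\rVert$ and $\sup_k\lVert Y^{n,x}_k\rVert$; of order $\lVert x-y\rVert$ for $\sup_{t\le T}\lVert X^x_t-X^y_t\rVert$ and $\sup_k\lVert Y^{n,x}_k-Y^{n,y}_k\rVert$; and, for $0\le t-s\le T/n$, of order $\sqrt{T/n}\,(1+\lVert x\rVert)$ for $\lVert X^x_t-X^x_s\rVert$ and of order $\sqrt{T/n}\,\lVert x-y\rVert$ for $\lVert(X^x_t-X^x_s)-(X^y_t-X^y_s)\rVert$. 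The first summand of \eqref{eq:hoelder.intro} is then obtained in the standard way: for the error $X^x-\bar Y^{n,x}$ one writes $\mu(X^x_s)-\mu(\bar Y^{n,x}_{\lfloor s\rfloor})=[\mu(X^x_s)-\mu(X^x_{\lfloor s\rfloor})]+[\mu(X^x_{\lfloor s\rfloor})-\mu(\bar Y^{n,x}_{\lfloor s\rfloor})]$ and likewise for $\sigma$, estimates the first bracket by the increment bound and the second by global Lipschitz continuity times $\lVert X^x_{\lfloor s\rfloor}-\bar Y^{n,x}_{\lfloor s\rfloor}\rVert$, and invokes Gronwall's lemma, arriving at $\sup_k\lVert X^x_{t_k}-Y^{n,x}_k\rVert_{L^q}\le Cn^{-1/2}(1+\lVert x\rVert)$ for every $q\in(0,\infty)$; below this is used with $q=2p$.

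For the second summand of \eqref{eq:hoelder.intro} --- the essential new point --- fix $x\ne y$ and consider the second-difference process $D_t:=(X^x_t-\bar Y^{n,x}_t)-(X^y_t-\bar Y^{n,y}_t)$, which solves $D_t=\int_0^t A_s\,ds+\int_0^t B_s\,dW_s$ with integrands of the schematic form $[\zeta(X^x_s)-\zeta(\bar Y^{n,x}_{\lfloor s\rfloor})]-[\zeta(X^y_s)-\zeta(\bar Y^{n,y}_{\lfloor s\rfloor})]$, $\zeta\in\{\mu,\sigma\}$. I would split each such term, as above, into a ``temporal'' part $[\zeta(X^x_s)-\zeta(X^x_{\lfloor s\rfloor})]-[\zeta(X^y_s)-\zeta(X^y_{\lfloor s\rfloor})]$ and a ``Lipschitz'' part $[\zeta(X^x_{\lfloor s\rfloor})-\zeta(\bar Y^{n,x}_{\lfloor s\rfloor})]-[\zeta(X^y_{\lfloor s\rfloor})-\zeta(\bar Y^{n,y}_{\lfloor s\rfloor})]$ and apply \eqref{eq:second.order} to each. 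Into the temporal part one inserts $(X^x_s,X^x_{\lfloor s\rfloor},X^y_s,X^y_{\lfloor s\rfloor})$ for $(x,y,\tilde x,\tilde y)$, and Hölder's inequality together with the increment and difference bounds of the previous paragraph turns it into a source term of $L^q$-order $\sqrt{T/n}\,\lVert x-y\rVert(1+\lVert x\rVert+\lVert y\rVert)$. Into the Lipschitz part one inserts $(X^x_{\lfloor s\rfloor},\bar Y^{n,x}_{\lfloor s\rfloor},X^y_{\lfloor s\rfloor},\bar Y^{n,y}_{\lfloor s\rfloor})$, which produces a genuine Gronwall term $c\lVert D_{\lfloor s\rfloor}\rVert$ plus terms of the shape $c\bigl(\lVert X^x_{\lfloor s\rfloor}-\bar Y^{n,x}_{\lfloor s\rfloor}\rVert+\lVert X^y_{\lfloor s\rfloor}-\bar Y^{n,y}_{\lfloor s\rfloor}\rVert\bigr)\bigl(\lVert X^x_{\lfloor s\rfloor}-X^y_{\lfloor s\rfloor}\rVert+\lVert D_{\lfloor s\rfloor}\rVert\bigr)$; estimating the two error factors in $L^{2p}$ by the strong rate from the previous paragraph and the remaining factor by the crude bound $\lVert D_{\lfloor s\rfloor}\rVert_{L^{2p}}\le C\lVert x-y\rVert$, these also become source terms of order $n^{-1/2}\lVert x-y\rVert(1+\lVert x\rVert+\lVert y\rVert)$. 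A concluding application of Burkholder--Davis--Gundy and Gronwall's lemma to $t\mapsto\E\!\bigl[\sup_{r\le t}\lVert D_r\rVert^p\bigr]$ (for $p\ge2$; the case $p<2$ reduces to it by Jensen's inequality) then gives $\lVert D_{t_k}\rVert_{L^p}\le Cn^{-1/2}\lVert x-y\rVert(1+\lVert x\rVert+\lVert y\rVert)$, which, as $\bar Y^{n,z}_{t_k}=Y^{n,z}_k$, is exactly the second summand in \eqref{eq:hoelder.intro}.

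The step I expect to be the main obstacle is this last one, and within it the bookkeeping of the growth factors and the $L^q$-exponents: one must ensure that each term produced by \eqref{eq:second.order} scales precisely as $n^{-1/2}\lVert x-y\rVert(1+\lVert x\rVert+\lVert y\rVert)$ and not as a higher power of $1+\lVert x\rVert+\lVert y\rVert$. The device is to use Hölder's inequality to split every product into an ``error'' factor of size $n^{-1/2}$ (supplied by the strong rate) and a ``difference'' factor of size $\lVert x-y\rVert$, and --- decisively --- to bound the process $D$ itself by its crude, growth-factor-free estimate $\lVert D\rVert_{L^q}\le C\lVert x-y\rVert$ whenever it appears multiplied by an already-small factor, so that no $L^q$-norm of $D$ with $q$ larger than the running exponent ever reappears on the right-hand side of the Gronwall inequality.
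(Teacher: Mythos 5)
Your proposal is correct and in its essentials follows the same route as the paper. The central device in both is identical: after forming the doubly differenced process $D_t=(X^x_t-\bar Y^{n,x}_t)-(X^y_t-\bar Y^{n,y}_t)$, one uses the second-order Lipschitz condition \eqref{eq:second.order} (equivalently \eqref{v03}) to convert the four-point term involving $\mu,\sigma$ into a genuine Gronwall term $c\lVert D_{\lfloor s\rfloor}\rVert$ plus source terms that are products of a one-point strong error of size $n^{-1/2}(1+\lVert x\rVert)$ and a spatial difference of size $\lVert x-y\rVert$ (your H\"older split into $L^{2p}\times L^{2p}$ factors matches the paper's split into $L^p\times L^p$ from $L^{p/2}$ in \eqref{m19}, which is why they restrict to $p\ge 4$ while you can work with $p\ge 2$). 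The paper orders the algebra slightly differently — it applies \eqref{v03} once to the pair $(X^{\funcPi,x}_{s,t},X^{\delta,y}_{s,\tilde t},X^{\funcPi,\tilde x}_{s,t},X^{\delta,\tilde y}_{s,\tilde t})$ and then peels off the temporal correction $\lVert(X^{\funcPi,x}_t-X^{\funcPi,x}_{\tilde t})-(X^{\funcPi,\tilde x}_t-X^{\funcPi,\tilde x}_{\tilde t})\rVert$ via \eqref{m36}, whereas you first split the integrand into temporal and Lipschitz parts and apply \eqref{eq:second.order} to each — but this is a cosmetic reordering, not a different argument, and all the same ingredients (increment bounds, two-point Lipschitz bounds, H\"older, BDG, Gronwall) appear in both. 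The only substantive difference is scope: the paper establishes the much more general temporal-spatial four-point estimates of \cref{s01b} (arbitrary grids $\delta$, arbitrary starting times $s,\tilde s$, Lyapunov-weighted moments), which forces it to route through the Markov property and the disintegration theorem to glue estimates across grid intervals, whereas your argument is pinned to a single uniform grid starting at $0$, which is all that \cref{a01} actually requires. Two small remarks: first, \eqref{eq:second.order} follows from \cref{f01} on the nose (take $(v_1,w_1,v_2,w_2)=(\tilde y,\tilde x,y,x)$), so the ``additional term $\ldots$ absorbed without further comment'' you mention does not in fact arise; second, when you apply \eqref{eq:second.order} to the Lipschitz part with $(X^x_{\lfloor s\rfloor},\bar Y^{n,x}_{\lfloor s\rfloor},X^y_{\lfloor s\rfloor},\bar Y^{n,y}_{\lfloor s\rfloor})$, the last factor it produces is $\lVert X^x_{\lfloor s\rfloor}-X^y_{\lfloor s\rfloor}\rVert$, not $\lVert X^x_{\lfloor s\rfloor}-X^y_{\lfloor s\rfloor}\rVert+\lVert D_{\lfloor s\rfloor}\rVert$, though as you note this is harmless since both are $O(\lVert x-y\rVert)$ in $L^{2p}$.
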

\cref{a01}  follows from
 \cref{c10}, H\"older's inequality, and the fact that for all $d\in\N$ it holds that all norms on $\R^d$ are equivalent.
In order to illustrate \cref{a01} by a
numerical experiment
we consider an example where the SDE solution is explicit,  \cref{e01} below.
\begin{example}\label{e01}
Consider the setting of \cref{a01}, assume for all $x\in\R$ that
$T=1$, $d=1$, $p=2$, $\mu(x)= -\sin(x)\cos^3(x) $, and $\sigma(x)= \cos^2(x)$ and for every $n\in \N$ let 
$\Delta_n\in\R  $ 
satisfy that
$\Delta_n=
 \bigl(\E \bigl[\bigl\lvert 
( X_{1}^1- {Y}_{n}^{n,1})
-( X_{1}^{1+\frac{1}{n}}- {Y}_{n}^{n,1+\frac{1}{n}})
\bigr\rvert^2\bigr]\bigr)^{1/2}.
$ We approximate each expectation by 
a Monte-Carlo average over $1000$ independent samples.
Note that for all
$t\in[0,1]$, $x\in\R$ it holds a.s.\
that \begin{align}
X_t^x=x-\int_{0}^{t}\sin(X_s^x)\cos^3(X_s^x)\,ds+\int_{0}^{t}\cos^2(X^x_s)\,dW_s
\end{align}
and $X_t^x= \arctan(aW_t+\tan(x))$ (see, e.g., \cite[(4.27) in p. 121]{KP95}).
  \cref{a01} shows that $\sup_{n\in\N}[\Delta_nn^{-1.5}]<\infty$. 
Our simulations support this result and show that
$ (2^n,\Delta_{2^n})_{n\in \{1,2,\ldots,10\}} $, decays  like 
 $ (2^n,(2^n)^{-1.5}) _{n\in \{1,2,\ldots,10\}} $; see \cref{f10}.
\end{example}

\begin{figure}\centering
\includegraphics[scale=1]{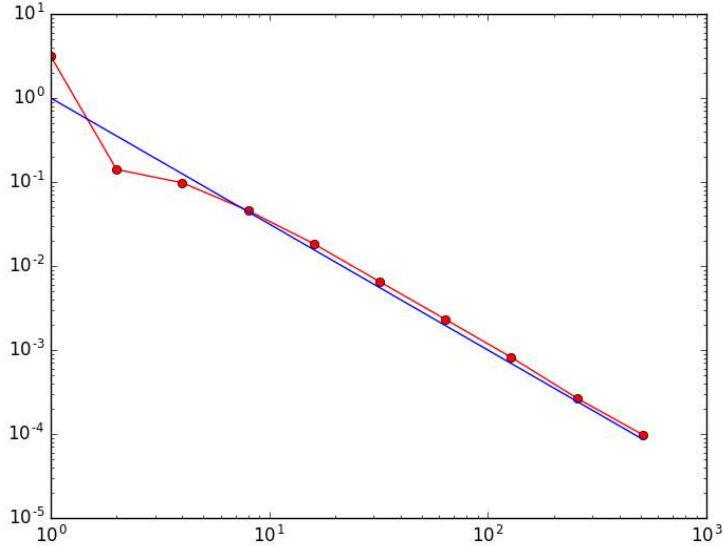}\caption{Simulation by Python using numpy and matplotlib.pyplot for \cref{e01}. The  line with dots contains the points $ (2^n,\Delta_{2^n}) $ for $n\in \{1,2,\ldots,10\} $ in \cref{e01}. The straight line is the reference line $(2^n,(2^n)^{-1.5})_{n\in\{1,2,\ldots,10\}}. $}\label{f10}
\end{figure}

The remainder of this article is organized as follows.
In \cref{sec:Gronwall} we generalize Gronwall's lemma to the case of general temporal
discretizations. 
In \cref{s01} we prove under suitable assumptions
that Euler--Maruyama approximations converge in temporal-spatial H\"older $L^p$-norms
with rate $0.5$ (cf.\ \cref{s01b} and \cref{c10}). As a corollary hereof, we prove in  \cref{c01} that Monte Carlo Euler approximations
converge in spatial Lipschitz $L^p$-norms with rate $0.5$.

\section{Gronwall inequalities}\label{sec:Gronwall}
\cref{b01} generalizes the well-known discrete and continuous Gronwall lemma.
\begin{lemma}\label{b01}
Let $a,c\in [0,\infty)$,
$T\in \R$, $t_0\in (-\infty,T)$, let
 $x\colon [t_0,T]\to [0,\infty) $, $\delta \colon [t_0,T]\to [t_0,T]$ be
 measurable, and assume for all $t\in[t_0,T]$ that 
$\delta(t)\leq t$ and 
 $
x(t)\leq a+\int_{t_0}^t c x(\delta(s))\,ds<\infty.
$
Then for all $t\in [t_0,T]$ it holds that $x(t)\leq a e^{c(t-t_0)}$.
\end{lemma}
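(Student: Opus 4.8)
The plan is to mimic the classical iteration/bootstrapping argument behind Gronwall's inequality, but carried out with the retarded argument $\delta(s)\leq s$ in place of $s$. First I would reduce to a statement about the function $M(t) := \sup_{s\in[t_0,t]} x(s)$, which is nondecreasing; since $\delta(s)\leq s$ we have $x(\delta(s)) \leq M(\delta(s)) \leq M(s)$ for every $s$, so the hypothesis yields the cleaner integral inequality
\begin{equation}\label{eq:Mbound}
x(t) \leq a + \int_{t_0}^{t} c\, M(s)\, ds \qquad \text{for all } t\in[t_0,T].
\end{equation}
Because the right-hand side of \eqref{eq:Mbound} is nondecreasing in $t$, taking the supremum over $[t_0,t]$ on the left gives $M(t)\leq a + \int_{t_0}^t c\,M(s)\,ds$, i.e.\ $M$ itself satisfies the classical Gronwall hypothesis with a genuine (non-retarded) integrand. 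This is the key maneuver: the supremum absorbs the delay.

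From there I would run the standard proof. One clean route: set $G(t) := a + \int_{t_0}^t c\,M(s)\,ds$, note $G$ is absolutely continuous with $G'(t) = c\,M(t) \leq c\,G(t)$ for a.e.\ $t$ (using $M\leq G$), so $\tfrac{d}{dt}\bigl(e^{-c(t-t_0)}G(t)\bigr) \leq 0$, hence $e^{-c(t-t_0)}G(t)\leq G(t_0)=a$, which gives $M(t)\leq G(t)\leq a e^{c(t-t_0)}$ and in particular $x(t)\leq M(t)\leq a e^{c(t-t_0)}$. Alternatively, and perhaps more elementarily given that measurability rather than continuity is assumed, I would iterate \eqref{eq:Mbound} directly: substituting the bound for $M$ into itself $k$ times produces $M(t) \leq a\sum_{j=0}^{k-1} \frac{(c(t-t_0))^j}{j!} + c^k \int_{t_0}^t \frac{(t-s)^{k-1}}{(k-1)!} M(s)\,ds$; since $M$ is bounded on $[t_0,t]$ (it is finite by hypothesis and nondecreasing), the remainder term tends to $0$ as $k\to\infty$ and the series converges to $a e^{c(t-t_0)}$.

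I expect the only genuine subtlety is the bookkeeping needed to legitimately pass from the delayed integrand $x(\delta(s))$ to $M(s)$ and then take the supremum — in particular making sure $M$ is finite (which follows from the assumed finiteness of $x(t)\leq a+\int_{t_0}^t cx(\delta(s))\,ds<\infty$ together with monotonicity of the integral, after checking $s\mapsto x(\delta(s))$ is integrable, which is part of the hypothesis that the displayed integral makes sense) and that measurability of $x$ and $\delta$ suffices for all the integrals to be well defined. The case $c=0$ is trivial and the case $t_0=T$ vacuous, so no separate treatment is really needed. Everything after the reduction to \eqref{eq:Mbound} is the textbook argument.
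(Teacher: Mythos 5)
Your proof is correct. You take a genuinely different (though equally natural) route from the paper. Where you introduce the running supremum $M(t):=\sup_{s\in[t_0,t]}x(s)$, observe that monotonicity lets you replace the retarded integrand $x(\delta(s))$ by $M(s)$, and then pass the supremum through the nondecreasing right-hand side to close a classical (non-retarded) Gronwall inequality for $M$, the paper instead works directly with the composed function $y(t):=x(\delta(t))$: since $\delta(t)\leq t$ and the integrand is nonnegative, the hypothesis evaluated at $\delta(t)$ yields $y(t)\leq a+\int_{t_0}^{\delta(t)}c\,y(s)\,ds\leq a+\int_{t_0}^{t}c\,y(s)\,ds$, which is already a classical Gronwall inequality for $y$; citing classical Gronwall gives $y(t)\leq a e^{c(t-t_0)}$, and plugging this back into the original hypothesis gives $x(t)\leq a+\int_{t_0}^t c\,a e^{c(s-t_0)}\,ds=a e^{c(t-t_0)}$. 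The paper's substitution is a bit more economical — it needs no auxiliary supremum object, no observation that $M$ is measurable-because-monotone, and no separate check that $M$ is finite — whereas your approach is more self-contained (you supply two complete finishes for the resulting Gronwall inequality rather than outsourcing it to a citation) and arguably more robust, since the supremum trick generalizes to situations where a direct substitution like $y=x\circ\delta$ is less convenient. One small point worth making explicit in your write-up: the finiteness of $M$ follows from the uniform bound $x(s)\leq a+\int_{t_0}^T c\,x(\delta(r))\,dr<\infty$ (the integral being nondecreasing in the upper limit), and the measurability of $M$ is a consequence of its monotonicity; both are implicit in what you wrote but deserve a sentence.
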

\begin{proof}[Proof of \cref{b01}]
The assumptions on $x$ and $\delta$
 imply for all $t\in[t_0,T]$ that
$
x(\delta(t)) \leq a+ \int_{t_0}^{\delta(t)}c x(\delta(s))\,ds
\leq a + \int_{t_0}^{t}c x(\delta(s))\,ds<\infty
$. This and Gronwall's lemma (see, e.g., \cite[Lemma~2.11]{GrohsWurstemberger2018})
 show for all
$t\in[t_0,T]$ that $x(\delta(t))\leq a e^{c(t-t_0)}$.
Therefore, the assumptions on $x$ and $\delta$  imply for all
$t\in[t_0,T]$
that 
$
x(t)\leq  a+\int_{t_0}^t c x(\delta(s))\,ds\leq 
a (1+c\int_{t_0}^t e^{c(s-t_0)}\,ds)
= a e^{c(t-t_0)}
.
$
This completes the proof of \cref{b01}.
\end{proof}

\begin{corollary}\label{b02}
Let $c\in [0,\infty)$, $p\in [1,\infty)$,
$T\in \R$, $t_0\in (-\infty,T)$, let
 $a,x\colon [t_0,T]\to [0,\infty) $, $\delta \colon [t_0,T]\to [t_0,T]$ be
 measurable, and assume for all $t\in[t_0,T]$ that 
$\delta(t)\leq t$
 and $
x(t) \leq a(t)+\bigl(\int_{t_0}^t \lvert c x(\delta(s))\rvert^p\,ds\bigr)^{\nicefrac{1}{p}}<\infty.
$
Then 
for all $t\in[t_0,T]$ it holds that 
\begin{align}\textstyle
 x(t) \leq  2^{1-\frac{1}{p}} \left[\sup_{s\in [t_0,t]}a(s)\right] \exp (\tfrac{2^{p-1}c^p (t-t_0)}{p}  ).
\end{align}
\end{corollary}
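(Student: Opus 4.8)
The plan is to reduce the claim to \cref{b01} by passing to the $p$-th power of the running supremum of $x$. Write $A(t):=\sup_{s\in[t_0,t]}a(s)$ and $g(t):=\int_{t_0}^t\lvert cx(\delta(s))\rvert^p\,ds$; the hypothesis gives $x(t)\le a(t)+g(t)^{1/p}$ with $g$ nondecreasing and everywhere finite. Fix $\tau\in[t_0,T]$; if $A(\tau)=\infty$ the assertion is trivial, so assume $A(\tau)<\infty$ and work on $[t_0,\tau]$. For $s\le t\le\tau$, monotonicity of $A$ and $g$ gives $x(s)\le A(t)+g(t)^{1/p}$, hence $X(t):=\bigl(\sup_{s\in[t_0,t]}x(s)\bigr)^p\le\bigl(A(t)+g(t)^{1/p}\bigr)^p<\infty$; note that $X$ is nondecreasing, hence measurable.

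Next I would linearize using the elementary inequality $(u+v)^p\le 2^{p-1}(u^p+v^p)$ (valid since $p\ge 1$) to obtain $X(t)\le 2^{p-1}A(t)^p+2^{p-1}c^p\int_{t_0}^t x(\delta(s))^p\,ds$. Since $\delta$ maps $[t_0,\tau]$ into itself (because $t_0\le\delta(s)\le s\le\tau$) one has $x(\delta(s))^p\le X(\delta(s))$, so on $[t_0,\tau]$
\begin{equation*}
X(t)\le 2^{p-1}A(\tau)^p+2^{p-1}c^p\int_{t_0}^t X(\delta(s))\,ds<\infty .
\end{equation*}
This is precisely the hypothesis of \cref{b01} with the constant $2^{p-1}A(\tau)^p$ playing the role of $a$ and $2^{p-1}c^p$ that of $c$; applying \cref{b01} yields $X(t)\le 2^{p-1}A(\tau)^p\exp(2^{p-1}c^p(t-t_0))$ for all $t\in[t_0,\tau]$.

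Finally, evaluating at $t=\tau$ and taking $p$-th roots gives $x(\tau)\le X(\tau)^{1/p}\le 2^{1-1/p}A(\tau)\exp(\tfrac{2^{p-1}c^p(\tau-t_0)}{p})$, which is the claim since $\tau\in[t_0,T]$ was arbitrary. The step I would be most careful about is the passage to the running supremum $X$: this single device simultaneously converts the $L^p$-type integral bound into the affine-in-$X$ integral bound required by \cref{b01}, replaces the possibly non-constant function $a$ by the constant $A(\tau)$, and --- via the finiteness of $g$ --- supplies the finiteness of the majorant that \cref{b01} needs; the monotonicity of $g$ and of $s\mapsto\sup_{r\le s}x(r)$ is exactly what makes this reduction legitimate.
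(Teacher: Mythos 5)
Your proof is correct and follows essentially the same path as the paper's: both linearize by $(A+B)^p\le 2^{p-1}(A^p+B^p)$, apply \cref{b01} with the constants $a\gets 2^{p-1}\sup_{[t_0,\bar t]}a^p$ and $c\gets 2^{p-1}c^p$, and finish by taking $p$-th roots. The only (harmless) variation is that you feed $X(t)=\bigl(\sup_{s\in[t_0,t]}x(s)\bigr)^p$ into \cref{b01} instead of $x^p|_{[t_0,\bar t]}$ as the paper does; the monotone wrapper is unnecessary once $\sup a$ is absorbed into the constant, but it neatly disposes of the degenerate case $\sup_{[t_0,\tau]}a=\infty$, which the paper glosses over.
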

\begin{proof}[Proof of \cref{b02}]
The assumptions and the fact that
$\forall \, A,B\in[0,\infty)\colon (A+B)^p\leq 2^{p-1}(A^p+B^p)$
show for all  ${t}\in[t_0,T]$, $\bar{t}\in[t,T]$ that 
$
\lvert x(t)\rvert^p  \leq 2^{p-1}\sup_{s\in[t_0,\bar{t}]}a^p(s)+2^{p-1}\int_{t_0}^t c^p\lvert x(\delta(s))\rvert^p\,ds<\infty.
$ This and \cref{b01} (applied for every $\bar{t}\in(t_0,T]$ with
$a\gets 2^{p-1}\sup_{s\in[t_0,\bar{t}]}a^p(s) $, $c\gets 2^{p-1}c^p$,
$T\gets \bar{t} $,
$x\gets x^p|_{[t_0,\bar{t}]} $ in the notation of \cref{b01}\footnote{Here and throughout the paper $a \gets b$ should be read as ``$a$ replaced by $b$''.})
imply for all ${t}\in(t_0,T]$, $\bar{t}\in[t,T]$ that 
$\lvert x(t)\rvert^p\leq  2^{p-1}\sup_{s\in [t_0,\bar{t}]}a^p(s) \exp (2^{p-1}c^p (t-t_0)  )$.
This and the fact that
$x(t_0)\leq a(t_0)$
 complete the proof of \cref{b02}.
\end{proof}

\section{Strong convergence rate of Euler--Maruyama approximations in temporal-spatial H\"older norms}
\label{s01}

In \cref{s01} we prove under suitable assumptions that Euler--Maruyama approximations converge in temporal-spatial H\"older $L^p$-norms
with rate $0.5$ (cf.\ \cref{s01b} and \cref{c10}). As a corollary hereof, we prove in  \cref{c01} that Monte Carlo Euler approximations
converge in spatial Lipschitz $L^p$-norms with rate $0.5$. 
The central assumption for this is \eqref{v03}.
In \cref{f01} below we provide the well-known fact that a $C^2$-function with bounded first and second order derivatives
satisfies this condition \eqref{v03}. First, we provide in \cref{s28} well-known upper bounds for 
polynomials
 which we
use as Lyapunov-type functions.


\begin{lemma}\label{s28}
 Let $d\in\N$, 
let $\lVert \cdot \rVert\colon \R^d\to [0,\infty)$ be the standard norm, and let
$p\in [3/2,\infty)$, $a,c\in[0,\infty)$,
$V \colon \R^d\to \R $ 
 satisfy for all $x\in\R^d$ that
$
V (x)= ({a} +c^2\lVert x\rVert  ^2 )^p.
$
Then 
it holds for all $x,y,z\in\R^d$ that
$
\bigl\lvert((\totalD V)(x))(y)\bigr\rvert
\leq 2pc (V(x))^{\frac{2p-1}{2p}}\lVert y\rVert$ and
$
\bigl\lvert ((\totalD^2 V)(x))(y,z)\bigr\rvert
\leq 2p(2p-1)c^2(V (x))^{\frac{p-1}{p}}\lVert y\rVert \lVert z\rVert$.
\end{lemma}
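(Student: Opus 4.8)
The plan is to represent $V$ as a composition and differentiate by the chain rule. Write $V = h\circ N$, where $N(x)=\langle x,x\rangle = \lVert x\rVert^2$ is the squared standard norm and $h\colon[0,\infty)\to\R$, $h(t)=(a+c^2t)^p$. Since $N$ is a quadratic form it is smooth with $((\totalD N)(x))(y)=2\langle x,y\rangle$ and $((\totalD^2 N)(x))(y,z)=2\langle y,z\rangle$, and since $p\geq 3/2\geq 1$ the function $h$ is twice continuously differentiable on $[0,\infty)$ with $h'(t)=pc^2(a+c^2t)^{p-1}$ and $h''(t)=p(p-1)c^4(a+c^2t)^{p-2}$. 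Hence $V\in C^2(\R^d,\R)$ and, for all $x,y,z\in\R^d$,
\[
((\totalD V)(x))(y)=2pc^2(a+c^2\lVert x\rVert^2)^{p-1}\langle x,y\rangle
\]
and
\[
((\totalD^2 V)(x))(y,z)=4p(p-1)c^4(a+c^2\lVert x\rVert^2)^{p-2}\langle x,y\rangle\langle x,z\rangle+2pc^2(a+c^2\lVert x\rVert^2)^{p-1}\langle y,z\rangle.
\]

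Next I would invoke the Cauchy--Schwarz inequality $\lvert\langle u,v\rangle\rvert\leq\lVert u\rVert\lVert v\rVert$ for each inner product above, together with the elementary bounds $c^2\lVert x\rVert^2\leq a+c^2\lVert x\rVert^2$ and therefore $c\lVert x\rVert\leq(a+c^2\lVert x\rVert^2)^{1/2}$, both valid because $a\geq 0$. For the first-order estimate this yields
\[
\lvert((\totalD V)(x))(y)\rvert\leq 2pc^2(a+c^2\lVert x\rVert^2)^{p-1}\lVert x\rVert\lVert y\rVert\leq 2pc(a+c^2\lVert x\rVert^2)^{p-\frac12}\lVert y\rVert,
\]
and $(a+c^2\lVert x\rVert^2)^{p-\frac12}=(V(x))^{\frac{2p-1}{2p}}$ gives the first claim. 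For the second-order estimate, Cauchy--Schwarz bounds the Hessian form by $[4p(p-1)c^4(a+c^2\lVert x\rVert^2)^{p-2}\lVert x\rVert^2+2pc^2(a+c^2\lVert x\rVert^2)^{p-1}]\lVert y\rVert\lVert z\rVert$; estimating $c^2\lVert x\rVert^2\leq a+c^2\lVert x\rVert^2$ in the first summand turns the bracket into $[4p(p-1)+2p]c^2(a+c^2\lVert x\rVert^2)^{p-1}=2p(2p-1)c^2(a+c^2\lVert x\rVert^2)^{p-1}$, and $(a+c^2\lVert x\rVert^2)^{p-1}=(V(x))^{\frac{p-1}{p}}$ gives the second claim.

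I do not expect a deep obstacle; the statement is a direct computation, and the step needing the most care is merely the bookkeeping. Specifically: the degenerate case $a+c^2\lVert x\rVert^2=0$ must be treated separately, where one observes that $V$ and both its derivatives vanish at such an $x$ (using $2p\geq 3>2$, so that $x\mapsto\lVert x\rVert^{2p}$ has vanishing Hessian at the origin), so that both asserted inequalities reduce to $0\leq 0$; the hypothesis $p\geq 3/2>1$ is what guarantees the $C^2$-regularity, the nonnegativity of the exponent $p-1$, and the legitimacy of passing from $c^2\lVert x\rVert^2\leq a+c^2\lVert x\rVert^2$ to the corresponding inequality for their $(p-2)$-th powers times $c^2\lVert x\rVert^2$; and the arithmetic identity $4p(p-1)+2p=4p^2-2p=2p(2p-1)$ is precisely what makes the constant in the second bound come out as stated.
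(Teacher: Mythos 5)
Your proposal is correct and follows essentially the same route as the paper: compute $\totalD V$ and $\totalD^2 V$ explicitly, apply Cauchy--Schwarz to each inner product, and close with the elementary bound $c\lVert x\rVert\leq(a+c^2\lVert x\rVert^2)^{1/2}=(V(x))^{1/(2p)}$, which makes the constants $2p$ and $4p(p-1)+2p=2p(2p-1)$ fall out as stated. The one thing you add that the paper does not mention is the separate treatment of the point where $a+c^2\lVert x\rVert^2=0$, where for $p\in[3/2,2)$ the chain-rule expression for the Hessian is formally singular; noting that the Hessian of $\lVert\cdot\rVert^{2p}$ vanishes at the origin for $2p\geq 3$ is a small but genuine clarification.
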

\begin{proof}[Proof of \cref{s28}]Throughout the proof let 
$\langle\cdot ,\cdot \rangle\colon \R^d\times\R^d\to\R $ be the standard scalar product. The assumptions and the Cauchy-Schwarz inequality
 show   for all  $x,y,z\in\R^d$  that
 $c \lVert x\rVert  \leq (V (x))^{\frac{1}{2p}}$, 
$ \bigl\lvert
[(\totalD V )(x)](y)\bigr\rvert= 
\bigl\lvert
p\bigl[{a} +c^2\lVert x\rVert  ^2 \bigr]^{p-1}2c^2\langle x,y\rangle\bigr\rvert\leq 
2pc^2(V  (x))^{\frac{p-1}{p}}\lVert x\rVert\lVert y\rVert
\leq 
2pc(V(x))^{\frac{2p-1}{2p}}\lVert y\rVert,
$ and
\begin{align}
\begin{split}
\left\lvert [(\totalD^2 V )(x)](y,z)\right\rvert
&= \left\lvert p(p-1)\left[{a} +c^2\lVert x\rVert  ^2 \right]^{p-2}2c^2\langle x,z\rangle
2c^2
\langle x,y\rangle+2pc^2\left[{a} +c^2\lVert x\rVert  ^2 \right]^{p-1}\langle y,z\rangle\right\rvert
\\
&\leq  4p(p-1)c^4(V (x))^{\frac{p-2}{p}}\lVert x\rVert^2\lVert y\rVert \lVert z\rVert+2pc^2(V  (x))^{\frac{p-1}{p}}\lVert y\rVert\lVert z\rVert\\
&\leq  4p(p-1)c^2(V (x))^{\frac{p-1}{p}}\lVert y\rVert \lVert z\rVert+2pc^2(V  (x))^{\frac{p-1}{p}}\lVert y\rVert\lVert z\rVert\\
&\leq (4p^2-2p)c^2(V (x))^{\frac{p-1}{p}}\lVert y\rVert \lVert z\rVert.
\end{split}
\end{align}
This 
completes the proof of \cref{s28}.
\end{proof}

 The following theorem, \cref{s01b}, is the main result of this article and establishes strong convergence
  rate $0.5$ of Euler--Maruyama approximations in temporal-spatial H\"older norms.
The estimates in \cref{s01b} are explicit in all parameters.
  In particular, this allows to identify situations where the approximation error grows at most polynomially in the dimension
  and where the Euler--Maruyama approximations do not suffer from the curse of dimensionality.
Note that $b=\infty$ is the case of globally Lipschitz continuous coefficients and in this case the right-hand sides
 of \eqref{m33b} and \eqref{m32b} are trivial (in $\{0,\infty\}$). 
Moreover, observe that \eqref{y01} and the fact that $\forall\,t\in [0,T]\colon\funcPi(t)=t $ imply
for every 
 $s\in [0,T]$, $x\in\R^d$  that
 $(X^{\funcPi,x}_{s,t})_{t\in[s,T]} $
is the exact solution to the SDE with coefficient functions $\mu,\sigma$ starting at $(s,x)$.
Furthermore,
\eqref{y01} implies that
for all 
$\delta\in \setS$,
$n\in\N$,
$
t_0,t_1,\ldots,t_{n}\in [0,T]$, $k\in[0,n-1]\cap\Z $,
$s\in[0,T]$, $t\in [s,T]$
 with  $0=t_0< t_1<\ldots<t_n=T$,
$ 
\delta ([t_0,t_1])=\{t_0\},\delta ((t_1,t_2])=\{t_1\},\ldots,
\delta ((t_{n-1},t_n])=\{t_{n-1}\}$, 
$t\in(t_k,t_{k+1}]$ it holds that $\delta(t)=t_k$, $X_{s,s}^{\delta,x}=x$, and
\begin{align}
 X^{\delta,x}_{s,t}
=X^{\delta,x}_{s,\max\{s,t_k\}} 
+\mu(X^{\delta,x}_{s,\max\{s,t_k\}} )(t-\max\{s,t_k\})
+\sigma(X^{\delta,x}_{s,\max\{s,t_k\}} )(W_t-W_{\max\{s,t_k\}})
,
\end{align} that is,
$(X^{\delta,x}_{s,t})_{t\in[s,T]} $
is the Euler--Maruyama approximation to the SDE with coefficient functions $\mu,\sigma$ starting at $(s,x)$ and associated to the partition $(t_0,t_1,\ldots,t_n)$ of $[0,T]$.

Let us discuss the proof of \cref{s01b}.
First, 
 \eqref{r06b}--\eqref{m36} are standard results 
 and we include their proofs here for convenience and to have explicit constants. The main parts of \cref{s01b} are
 \eqref{m33}--\eqref{m32}.
While the estimate of a two point term is based on 
Gronwall's inequality and \eqref{k02},
the four point term in  \eqref{m33}
is estimated by using Gronwall's inequality and \eqref{m23}.
A crucial step for \eqref{m23} is \eqref{m12} in which
the regularity assumption of $\mu,\sigma$ in \eqref{v03}  is used to ``break'' the four point term containing $\mu,\sigma$ in \eqref{m23} into 
a four point term solely containing the SDE solution and its approximation.
To prove \eqref{m32}  we extend the four point estimate in \eqref{m33} to time regularity estimates. In
the proof of \eqref{m28}
 we have done similar things but for two point terms.


\begin{theorem}[Strong convergence of Euler--Maruyama approximations in H\"older norms]\label{s01b}
Let $\lVert \cdot\rVert\colon \bigcup_{k,\ell\in\N}\R^{k\times \ell}\to[0,\infty)$ satisfy for all $k,\ell\in\N$, $s=(s_{ij})_{i\in[1,k]\cap\N,j\in [1,\ell]\cap\N}\in\R^{k\times \ell}$ that
$\lVert s\rVert^2=\sum_{i=1}^{k}\sum_{j=1}^{\ell}\lvert s_{ij}\rvert^2$,
let $0\cdot \infty=0$,
let $d,m\in \N$, 
$T,c,\barC\in (0,\infty)$,
$b\in (0,\infty]$,
$p\in[2,\infty)$,
$\mu\in C(\R^d,\R^d)$, 
$\sigma\in C(\R^d,\R^{d\times m})$,  
$V\in C^2(\R^d,[1,\infty))$ satisfy for all
$x,y,\tilde{x},\tilde{y}\in\R^d$ that 
$
\lVert\mu(0)\rVert+\lVert\sigma(0)\rVert+c\lVert x\rVert\leq  (V(x))^{\nicefrac{1}{p}},
$
\begin{equation}
\bigl\lvert((\totalD V)(x))(y)\bigr\rvert
\leq \barC(V(x))^{\frac{p-1}{p}}\lVert y\rVert,\quad 
\bigl\lvert ((\totalD^2 V)(x))(y,y)\bigr\rvert
\leq \barC(V(x))^{\frac{p-2}{p}}\lVert  y\rVert^2,\label{m42}
\end{equation}
and
\begin{equation}
\begin{split}
&
\max _{\zeta\in\{\mu,\sigma\}}
\left\lVert 
(\zeta(x)-\zeta (y))-
(\zeta(\tilde{x})-\zeta (\tilde{y}))\right\rVert 
\\&\leq c
\left\lVert 
(x-y)-(\tilde{x}-\tilde{y})\right\rVert 
+b
\frac{\left\lVert x-y\right\rVert 
+\left\lVert \tilde{x}-\tilde{y}\right\rVert 
}{2}\lVert x-\tilde{x}\rVert,
\label{v03}
\end{split}
\end{equation}
let $\funcPi\colon [0,T] \to[0,T]$ satisfy for all $t\in[0,T]$ that
$\funcPi(t)=t$,
let $\setS$ satisfy that
\begin{equation}\small\begin{split}
\setS= \left \{
\delta\colon [0,T]\to [0,T]\colon 
\begin{aligned}
&\exists\,
n\in\N,
t_0,t_1,\ldots,t_{n}\in [0,T]\colon 0=t_0< t_1<\ldots<t_n=T, \\
&
\delta ([t_0,t_1])=\{t_0\},\delta ((t_1,t_2])=\{t_1\},\ldots,
\delta ((t_{n-1},t_n])=\{t_{n-1}\}
\end{aligned}
\right\},
\end{split}\end{equation}
let $\setSB=\setS\cup\{\funcPi\}$,
let $\size{\cdot}\colon \setSB\to [0,T]$ satisfy for all $\delta \in \setS$ that $\size{\funcPi}=0$ and
\begin{equation}
 \size{\delta} =\max
\Bigl\{\lvert s-t\rvert\colon s,t\in \delta([0,T]), s< t, (s,t)\cap \delta([0,T])=\emptyset  \Bigr\},
\end{equation}
let $(\Omega,\mathcal{F},\P, (\F_t)_{t\in[0,T]})$ be a filtered probability space which satisfies the usual conditions,
for every $s\in[1,\infty)$, 
$k,\ell\in\N$
and every random variable $\mathfrak{X}\colon \Omega\to\R^{k\times \ell}$ let $\lVert\mathfrak{X}\rVert_s\in[0,\infty]$ satisfy that $\lVert\mathfrak{X}\rVert_s^s=\E [\lVert\mathfrak{X}\rVert^s]$,
let $W=(W_t)_{t\in[0,T]}\colon [0,T]\times\Omega\to\R^{m}$ be a standard 
$(\F_t)_{t\in[0,T]}$-Brownian motion with continuous sample paths, and
for every 
$\delta\in \setSB$, $s\in [0,T]$, $x\in\R^d$ 
 let $(X^{\delta,x}_{s,t})_{t\in[s,T]} \colon[s,T]  \times\Omega\to\R^d $ 
be an adapted stochastic process with continuous sample paths such that
for all $t\in[s,T]$ it holds a.s.\ that
\begin{equation}
\label{y01}
 X^{\delta,x}_{s,t}=
x+\int_{s}^{t}\mu( X_{s,\max\{s,\rdown{r}{\delta}\}}^{\delta,x})\,d{r}
+\int_{s}^{t}\sigma( X_{s,\max\{s,\rdown{r}{\delta}\}}^{\delta,x})\,dW_{r}.
\end{equation}
Then
\begin{enumerate}[i)]
\item \label{r06b}it holds 
for all $\delta\in\setSB$,
 $s\in[0,T]$, $t\in [s,T]$,
 $x\in\R^d$
  that 
$
\E\bigl[ V(X_{s,t}^{\delta,x})\bigr]\leq e^{1.5\barC\lvert t-s\rvert}V(x)
,
$
\item \label{m29} it holds for all
 $\delta\in\setSB$,
$s\in[0,T]$, 
$t\in [s,T]$,
$x\in\R^d$ that
\begin{align}
\left\lVert 
X_{s,t}^{\delta,x}
-X_{s,t}^{\funcPi,x}\right\rVert_{p}
\leq 
 \sqrt{2}
c\left[
\sqrt{T}+p
\right]^2
e^{ c^2\left[
\sqrt{T}+p
\right]^2 T}
(e^{1.5\barC T}V(x))^{\nicefrac{1}{p} }
\lvert t-s\rvert^{\nicefrac{1}{2}} \size{\delta}^{\nicefrac{1}{2}}
,
\end{align}

\item 
\label{m28} it holds for all $\delta\in \setSB$,
$ s,\tilde{s}\in [0,T]$,
$t\in[s,T]$,
$\tilde{t}\in[\tilde{s},T]$,
 $x,\tilde{x}\in\R^d$ that
\begin{align}\begin{split}
&
\left \lVert X^{\delta,x}_{s,t}
-
X^{\delta,\tilde{x}}_{\tilde{s},\tilde{t}}\right\rVert_{p}\leq \sqrt{2}\lVert x-\tilde{x} \rVert 
e^{c^2\left[
\sqrt{T}+p
\right]^2 T}\\
&+
{5} e^{c^2\left[
\sqrt{T}+p
\right]^2T}
\left[
\sqrt{T}+p
\right]
e^{1.5 \barC T/p}
\frac{(V(x))^{\nicefrac{1}{p}}
+(V(\tilde{x}))^{\nicefrac{1}{p} }}{2}
\left[
\lvert s-\tilde{s}\rvert^{\nicefrac{1}{2}} +
\lvert t-\tilde{t}\rvert^{\nicefrac{1}{2}}\right]
,
\end{split}\end{align}

\item \label{m36}it holds
for all 
$\delta\in\setSB$, $s\in[0,T]$,
$t, \tilde{t}\in[s,T]$, 
$x,\tilde{x}\in\R^d$ that
\begin{align}
&\left \lVert 
\bigl(X^{\delta,x}_{s,\tilde{t}}-X_{s,t}^{\delta,x}\bigr)
-\bigl(X^{\delta,\tilde{x}}_{s,\tilde{t}}-X^{\delta,\tilde{x}}_{s,t}
\bigr)\right\rVert_{p}
\leq c\left[
\sqrt{T}+p
\right]\sqrt{2}
e^{c^2\left[
\sqrt{T}+p
\right]^2 T} \lVert x-\tilde{x}\rVert\lvert t-\tilde{t}\rvert^{\nicefrac{1}{2}}
,
\end{align}

\item 
\label{m33}
it holds for all 
$\delta\in\setSB$, $s\in[0,T]$,
$t\in[s,T]$, 
$x,\tilde{x},y,\tilde{y}\in\R^d$ that
\begin{align}
&\1_{ [4,\infty)} (p)\left\lVert 
\bigl(
X^{\funcPi,x}_{s,t}
-X^{\delta,y}_{s,t}
\bigr)
-\bigl(
X^{\funcPi,\tilde{x}}_{s,t}
-X^{\delta,\tilde{y}}_{s,t}
\bigr)
\right\rVert_{\frac{p}{2}}
  \leq \sqrt{2}e^{c^2\left[\sqrt{T}+p\right]^2T}
\left\lVert 
\bigl(
x-y \bigr)
-
\bigl(
\tilde{x}-\tilde{y} \bigr)\right\rVert\nonumber\\
&+
2\sqrt{2}(c^2+bc+b)
\left[\sqrt{T}+p\right]^4
e^{3 c^2\left[\sqrt{T}+p\right]^2 T}
e^{1.5\barC T /p}
\frac{(V(x))^{\nicefrac{1}{p}}
+(V(\tilde{x}))^{\nicefrac{1}{p} }}{2}
\size{\delta}^{\nicefrac{1}{2}}\lVert x-\tilde{x} \rVert
\lvert t-s\rvert^{\nicefrac{1}{2}}
\nonumber \\
& + 2\sqrt{2}b\left[\sqrt{T}+p\right] e^{3 c^2\left[
\sqrt{T}+p
\right]^2 T}
\frac{\left(\lVert x-y \rVert+
\lVert \tilde{x}-\tilde{y} \rVert\right) \lVert x-\tilde{x}\rVert}{2}
\lvert t-s\rvert^{\nicefrac{1}{2}},\label{m33b}
\end{align}
and
\item 
\label{m32}it holds
for all $\delta\in \setSB$,
 $s,\tilde{s}\in[0,T]$, 
$t\in [s,T]$, $ \tilde{t}\in[\tilde{s},T]$,
$x,\tilde{x}\in\R^d$ that
\begin{align}\begin{split}
&\1_{ [4,\infty)} (p)
\left\lVert \bigl(
X_{s,t}^{\funcPi,x}
-
X_{\tilde{s},\tilde{t}}^{\funcPi,\tilde{x}}
\bigr)
 -
\bigl(X_{s,t }^{\delta,x} 
-X_{\tilde{s},\tilde{t}}^{\delta,\tilde{x}} \bigr)\right\rVert_{\frac{p}{2}}\\
&\leq 
31 (b+c)(c+1)
\left[\sqrt{T}+p\right]^6
e^{5 c^2\left[\sqrt{T}+p\right]^2 T}\\
&\qquad\qquad
 e^{4.5\barC T/p}\frac{(V(x))^{\nicefrac{2}{p}}
+(V(\tilde{x}))^{\nicefrac{2}{p} }}{2}
\left[
\lvert s-\tilde{s}\rvert^{\nicefrac{1}{2}}+
\lvert t-\tilde{t}\rvert^{\nicefrac{1}{2}}+\lVert x-\tilde{x}\rVert\right]
\size{\delta}^{\nicefrac{1}{2}}.\end{split}\label{m32b}
\end{align}
\end{enumerate}
\end{theorem}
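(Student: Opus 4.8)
The plan is to prove the six items in the stated order, since each one feeds into the next, and to reduce every estimate --- after an application of It\^o's formula, the Burkholder--Davis--Gundy (BDG) inequality (whose $L^q$-constant I bound by an absolute constant times $\sqrt q$, hence by a constant times $\sqrt T+p$ on the horizon $[0,T]$) and Minkowski's integral inequality --- to one of the two Gronwall lemmas \cref{b01} and \cref{b02}. Before starting I would record two elementary consequences of the hypotheses. Taking $\tilde x=x$ in \eqref{v03} gives $\lVert\zeta(\tilde y)-\zeta(y)\rVert\le c\lVert y-\tilde y\rVert$ for $\zeta\in\{\mu,\sigma\}$, so $\mu,\sigma$ are globally $c$-Lipschitz; combined with $\lVert\mu(0)\rVert+\lVert\sigma(0)\rVert+c\lVert x\rVert\le(V(x))^{1/p}$ this yields the linear growth bounds $\lVert\mu(x)\rVert\le(V(x))^{1/p}$ and $\lVert\sigma(x)\rVert\le(V(x))^{1/p}$. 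Integrating the gradient bound in \eqref{m42} along segments gives $(V(y))^{1/p}\le(V(x))^{1/p}+\tfrac{\barC}{p}\lVert x-y\rVert$, which I use to compare Lyapunov weights at different starting points, together with $\lVert y-\tilde y\rVert\le\lVert(x-y)-(\tilde x-\tilde y)\rVert+\lVert x-\tilde x\rVert$.

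For \cref{r06b} I would apply It\^o's formula to $t\mapsto V(X^{\delta,x}_{s,t})$, take expectations (the local-martingale part vanishes after a standard localization using $V\in C^2$), bound the drift and Hessian terms via \eqref{m42}, the growth of $\mu,\sigma$ and the conditional H\"older inequality, and close with \cref{b01} (after passing to the non-decreasing envelope of $r\mapsto\E[V(X^{\delta,x}_{s,r})]$ to absorb the mismatch between $X^{\delta,x}_{s,\max\{s,\delta(r)\}}$ and $X^{\delta,x}_{s,r}$); the constant $1.5\barC=(1+\tfrac12)\barC$ reflects the first- and second-order terms of It\^o's formula. The two-point and temporal-increment estimates \cref{m29}, \cref{m28}, \cref{m36} are then obtained by writing the relevant difference of processes as a deterministic plus a stochastic integral and estimating the integrand: for the purely spatial comparison $X^{\delta,x}_{s,\cdot}-X^{\delta,\tilde x}_{s,\cdot}$ one uses $c$-Lipschitzness and closes with \cref{b02}, producing the factor $\sqrt2\lVert x-\tilde x\rVert e^{c^2[\sqrt T+p]^2T}$; for temporal increments one bounds the integrands by $(V(\cdot))^{1/p}$ and invokes \cref{r06b}; for \cref{m29} one additionally splits $\mu(X^{\delta,x}_{s,\max\{s,\delta(r)\}})-\mu(X^{\funcPi,x}_{s,r})$ into $c\lVert X^{\delta,x}_{s,\max\{s,\delta(r)\}}-X^{\funcPi,x}_{s,\max\{s,\delta(r)\}}\rVert$ and the time-oscillation term $c\lVert X^{\funcPi,x}_{s,r}-X^{\funcPi,x}_{s,\max\{s,\delta(r)\}}\rVert\le(\text{const})\,\size{\delta}^{1/2}(V(x))^{1/p}$ (the bound coming from \cref{r06b}), and closes again with \cref{b02}; \cref{m36} follows by feeding the spatial two-point bound into a BDG estimate over $[t,\tilde t]$.

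The heart of the proof is \cref{m33}. Fixing $s,x,\tilde x,y,\tilde y$ I would set $Z_t:=(X^{\funcPi,x}_{s,t}-X^{\delta,y}_{s,t})-(X^{\funcPi,\tilde x}_{s,t}-X^{\delta,\tilde y}_{s,t})$, so that $Z_s=(x-y)-(\tilde x-\tilde y)$ and $Z$ solves the integral equation obtained by combining the four instances of \eqref{y01}. Its $\mu$-integrand is the four-point expression $\bigl(\mu(X^{\funcPi,x}_{s,r})-\mu(X^{\delta,y}_{s,\max\{s,\delta(r)\}})\bigr)-\bigl(\mu(X^{\funcPi,\tilde x}_{s,r})-\mu(X^{\delta,\tilde y}_{s,\max\{s,\delta(r)\}})\bigr)$, and similarly for $\sigma$. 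Applying \eqref{v03} with the four arguments $X^{\funcPi,x}_{s,r}$, $X^{\delta,y}_{s,\max\{s,\delta(r)\}}$, $X^{\funcPi,\tilde x}_{s,r}$, $X^{\delta,\tilde y}_{s,\max\{s,\delta(r)\}}$ is the ``breaking'' step: it splits this into (a) $c$ times the four-point difference of these arguments, which equals $Z_r$ minus the four-point Euler time-oscillation $(X^{\delta,y}_{s,\max\{s,\delta(r)\}}-X^{\delta,y}_{s,r})-(X^{\delta,\tilde y}_{s,\max\{s,\delta(r)\}}-X^{\delta,\tilde y}_{s,r})$ --- the latter controlled, by the argument of \cref{m36}, by a multiple of $\lVert y-\tilde y\rVert\,\size{\delta}^{1/2}$ --- and (b) the curvature term $b\,\tfrac{\lVert\cdot\rVert+\lVert\cdot\rVert}{2}\lVert X^{\funcPi,x}_{s,r}-X^{\funcPi,\tilde x}_{s,r}\rVert$, whose last factor is bounded by the spatial two-point estimate and whose first two factors are bounded via \cref{m29}, the two-point estimate and the gradient bound on $V$; thus (a) supplies both the term on which the Gronwall closure acts and the middle right-hand term of \eqref{m33b}, the initial value $Z_s$ accounts for the first right-hand term, and (b) accounts for the last term. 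Taking $\lVert\cdot\rVert_{p/2}$-norms (which is where $p/2\ge2$, hence the indicator $\1_{[4,\infty)}(p)$, is used), applying BDG to the $\sigma$-part, Minkowski throughout, and Cauchy--Schwarz to products of two $L^p$-factors, one reaches an inequality $\lVert Z_t\rVert_{p/2}\le f(t)+\bigl(\int_s^t\lvert C[\sqrt T+p]\lVert Z_{\delta(r)}\rVert_{p/2}\rvert^2\,dr\bigr)^{1/2}$ with $f$ equal to the last two lines of \eqref{m33b} up to constants, and \cref{b02} (with exponent $2$) finishes. The delicate point is precisely this splitting: extracting from \eqref{v03} a clean $c\lVert Z_r\rVert$-term for the Gronwall step while checking that every remaining term really carries the gain $\size{\delta}^{1/2}$ or the product structure $(\lVert x-y\rVert+\lVert\tilde x-\tilde y\rVert)\lVert x-\tilde x\rVert$, and keeping all constants explicit.

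Finally, \eqref{m32b} is deduced from \eqref{m33b} by adding temporal regularity, just as \cref{m28} was obtained from the spatial two-point bound. Specializing \eqref{m33b} to $y=x$, $\tilde y=\tilde x$ kills its first and third right-hand terms and shows that the error process $G^z_{s,t}:=X^{\funcPi,z}_{s,t}-X^{\delta,z}_{s,t}$ satisfies $\lVert G^x_{s,t}-G^{\tilde x}_{s,t}\rVert_{p/2}\le(\text{const})\,\size{\delta}^{1/2}\tfrac{(V(x))^{1/p}+(V(\tilde x))^{1/p}}{2}\lVert x-\tilde x\rVert\lvert t-s\rvert^{1/2}$. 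It remains to bound the temporal increments $\lVert G^{\tilde x}_{s,t}-G^{\tilde x}_{\tilde s,\tilde t}\rVert_{p/2}$ and the analogous increment of $G^x-G^{\tilde x}$ by a multiple of $\size{\delta}^{1/2}(V(\tilde x))^{1/p}(\lvert s-\tilde s\rvert^{1/2}+\lvert t-\tilde t\rvert^{1/2})$; this is done as in \cref{m28} and \cref{m36}, writing $G$ as a deterministic plus a stochastic integral and bounding the integrand $\mu(X^{\delta,\tilde x}_{s,\max\{s,\delta(r)\}})-\mu(X^{\funcPi,\tilde x}_{s,r})$ (and its $\sigma$-analogue) via Lipschitzness, \cref{m29} and the Euler time-oscillation estimate from \cref{r06b}, so that each integrand is itself $O(\size{\delta}^{1/2})$, and then applying BDG --- crucially estimating the increment of $G$ as a whole rather than those of $X^{\funcPi}$ and $X^\delta$ separately, which is what preserves the rate. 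Combining this with the above specialization of \eqref{m33b} and with $(V(y))^{1/p}\le(V(x))^{1/p}+\tfrac{\barC}{p}\lVert x-y\rVert$ gives \eqref{m32b}.
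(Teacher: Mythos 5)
Your plan reproduces the paper's architecture rather faithfully---break the four-point term with \eqref{v03} so that a Gronwall-closable $Z$-piece separates from error-rate carrying remainders, with items~(i)--(iv) supplying the requisite ingredients---and the key algebraic identity you write down for the $c$-term in \cref{m33} is correct. Two points where you take a genuinely different route, and one place you gloss over, are worth flagging. For \cref{r06b} the paper does not apply It\^o's formula plus a Gronwall/envelope argument; for the Euler process it conditions on $\F_{\max\{s,\delta(t)\}}$, bounds $\E[V(z+\mu(z)h+\sigma(z)W_h)]$ explicitly via the auxiliary Lyapunov lemma, and then iterates over grid intervals. This sidesteps the a-priori finiteness that \cref{b01} assumes ($x(t)\le a+\int\ldots<\infty$); if you go the It\^o route you need to supply the localization step making the local martingale integrable, which you mention in passing but which is the real content. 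In \cref{m33}, after breaking with \eqref{v03} you decompose the four-point argument difference as $Z_r$ (all at time $r$) plus the \emph{Euler} time oscillation, whereas the paper decomposes as $Z_{\max\{s,\delta(r)\}}$ (all at the grid time) plus the \emph{exact-solution} time oscillation \eqref{m14}. Both are algebraically valid, but the paper's choice is the one whose remainder comes out directly proportional to $\lVert x-\tilde x\rVert\size{\delta}^{1/2}$, matching the middle term of \eqref{m33b}; yours produces a remainder proportional to $\lVert y-\tilde y\rVert\size{\delta}^{1/2}$ that must be converted via $\lVert y-\tilde y\rVert\le\lVert(x-y)-(\tilde x-\tilde y)\rVert+\lVert x-\tilde x\rVert$, creating an extra $\lVert(x-y)-(\tilde x-\tilde y)\rVert\size{\delta}^{1/2}$ source that, while boundedly absorbable into the leading term, does not reproduce the stated constants; so as written your argument proves a bound of the same shape but not with the literal coefficients in \eqref{m33b} and \eqref{m32b}. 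Finally, for \cref{m28} and especially \cref{m32} the comparison of two Euler processes started at different times $s\ne\tilde s$ is not a ``temporal increment'' reducible to an integral over $[s,\tilde s]$: when $\tilde s\notin\delta([0,T])$ the two processes are anchored differently on the first incomplete grid cell, and the paper's proof hinges on the case distinction in Figures~\ref{f01a}--\ref{f01d}, the explicit one-step formulas \eqref{m40}/\eqref{m22} when $(s,\bar s)\cap\delta([0,T])=\emptyset$, and the Markov structure only at grid points. Your plan names the correct insight (estimating $G$ as a whole) but does not engage with this alignment problem, and ``bounding the integrands by $(V(\cdot))^{1/p}$ as for $t$ vs.\ $\tilde t$'' does not by itself cover the $s$ vs.\ $\tilde s$ comparison; that case analysis is the step you would still need to supply.
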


\begin{proof}[Proof of \cref{s01b}]
Throughout this proof let $\sigma_1,\sigma_2,\ldots,\sigma_m\in C(\R^d,\R^d)$ satisfy that $\sigma= (\sigma_1,\sigma_2,\ldots,\sigma_m)$.
First,
observe that
\eqref{v03} (applied with $(x,y,\tilde{x},\tilde{y})\gets (x,y,x,x)  $ in the notation of \eqref{v03}), the fact that
$\forall\,x\in\R^d\colon
\lVert\mu(0)\rVert+\lVert\sigma(0)\rVert+c\lVert x\rVert\leq  (V(x))^{\nicefrac{1}{p}}
$, and the triangle inequality
 prove for all $x,y\in\R^d$, $\zeta\in\{\mu,\sigma\}$ that
\begin{align}
\left\lVert
\zeta(x)-\zeta(y)\right\rVert
\leq c \lVert x-y\rVert
\quad\text{and}\quad
\lVert \zeta(x)\rVert\leq 
\lVert \zeta(0)\rVert+c\lVert x\rVert
\leq 
 (V(x))^{\nicefrac{1}{p}}.\label{r11}
\end{align}
This,  \eqref{m42}, and the fact that
$\forall\,A,B\in [0,\infty),\lambda\in (0,1)\colon A^\lambda B^{1-\lambda}\leq \lambda A+(1-\lambda)B$ imply for all $x,y\in\R^d$  that 
\begin{align}
&\left\lvert ((\totalD V)(y))(\mu(x))\right\rvert+\frac{1}{2}\left\lvert \sum_{k=1}^{m}
((\totalD^2 V)(y))(\sigma_k(x),\sigma_k(x))\right\rvert \nonumber \\
&\leq\xeqref{m42}
\barC(V(y))^{1-\frac{1}{p}}
\lVert \mu(x)\rVert+\frac{1}{2} 
\barC(V(y))^{1-\frac{2}{p}}\lVert \sigma(x)\rVert^2\nonumber\\
& 
\leq \xeqref{r11}
\barC (V(y))^{1-\frac{1}{p}}(V(x))^{\frac{1}{p}}+\frac{1}{2}\barC(V(y))^{1-\frac{2}{p}}
(V(x))^\frac{2}{p}  \nonumber
\\
&
\leq
\barC\left[\left(1-\frac{1}{p}\right)V(y)+\frac{1}{p}V(x)\right]+ \frac{1}{2}\barC\left[\left(1-\frac{2}{p}\right)V(y)+\frac{2}{p}V(x)\right] \nonumber \\
&= \left[\barC\left(1-\frac{1}{p}\right)+\frac{1}{2}\barC\left(1-\frac{2}{p}\right)\right]V(y)
+
\left[\barC\frac{1}{p}+\frac{1}{2}\barC
\frac{2}{p}\right]V(x)  \nonumber\\
&
= \left(1.5\barC-\frac{2 \barC}{p}\right)V(y)+\frac{2\barC}{p}V(x).
\label{r04}\end{align}
This  and
\cite[Lemma~2.2]{CoxHutzenthalerJentzen2014} (applied
for all $s\in[0,T)$,
$t\in(s,T]$,  $x\in\R^d$
with
$T\gets  t-s$,
$O\gets \R^d$,
$V\gets \bigl( [0,t-s]\times\R^d\ni(\mathfrak{t},\mathfrak{x})\mapsto V(\mathfrak{x})\in[0,\infty)\bigr)$, $\alpha\gets  \bigl([0,t-s]\ni \mathfrak{t}\mapsto 1.5 \barC \in [0,\infty) \bigr)$, 
$\tau\gets  t-s$,
$X\gets  (X^{\funcPi,x}_{s,s+r})_{r\in[0,t-s]}$  
 in the notation of
\cite[Lemma~2.2]{CoxHutzenthalerJentzen2014})  show for all
$x\in\R^d$, $s\in[0,T]$, $t\in [s,T]$ 
  that 
\begin{align}\label{r05}
\E \bigl[V(X_{s,t}^{\funcPi,x})\bigr]\leq e^{1.5\barC(t-s)}V(x).
\end{align}
Next, \eqref{r04},
 \cite[Theorem~2.4]{hudde2021stochastic} (applied for all $x\in\R^d$, $s\in[0,T]$ with
$H\gets  \R^d$, $U\gets \R^m$, $O\gets \R^d$,
$\tau\gets  (\Omega\ni\omega\mapsto s\in[0,T] )$,
$X\gets  (x+\mu(x)t+\sigma(x)W_{t})_{{t}\in [0,T]}$,
 $a\gets \bigl([0,T]\times\Omega\ni ({t},\omega)\mapsto \mu(x)\in\R^d \bigr)$,
 $b\gets \bigl([0,T]\times\Omega\ni (t,\omega)\mapsto \sigma(x)\in\R^{{d\times m}} \bigr)$,
$p\gets  1$,
$V\gets  ( [0,T]\times\R^d\ni (t,y))\mapsto V(y)\in[0,\infty))$,
 $\alpha\gets \bigl([0,T]\times\Omega\ni ({t},\omega)\mapsto  (1.5\barC-\frac{2\barC}{p})\in[0,\infty]\bigr)$,
$\beta\gets \bigl([0,T]\times\Omega\ni ({t},\omega)\mapsto  \frac{2\barC V(x)}{p}\in[0,\infty]\bigr)$, $q_1\gets  1$, $q_2\gets  \infty$  in the notation of \cite[Theorem~2.4]{hudde2021stochastic}), and
the fact that $\forall\,a\in[0,\infty)\colon 1+ a\leq e^{a}$ imply for all $x\in\R^d$, $s\in[0,T]$ that
\begin{align}\begin{split}
&
\E\bigl[
V(x+\mu(x)s+\sigma(x)W_{s}))\bigr]
\leq e^{(1.5\barC-\frac{2\barC}{p})s}\left(1+\tfrac{2\barC s}{p}\right)V(x)\\
&
\leq e^{(1.5\barC-\frac{2\barC}{p})s}e^{\frac{2\barC s}{p}}V(x)
= e^{1.5\barC s}V(x).\end{split}
\end{align}%
This, the tower property,
the disintegration theorem (see, e.g., \cite[Lemma 2.2]{HJK+18}),
 the Markov property of $W$, and the fact that
$\forall\, s\in[0,T],t\in[s,T], B\in\mathcal{B}(\R^d)\colon \P((W_{t}-W_{s})\in B)=\P( W_{t-s}\in B)$ imply  for all
$ \delta\in \setS$, $x\in\R^d$, $s\in[0,T]$, $t\in [s,T]$ 
  that
\begin{align}
&\E \bigl[V(X_{s,t}^{\delta,x} )\bigr]=\E\! \left[\E \bigl[V(X_{s,t}^{\delta,x})\big|\F_{\max\{s,\rdown{t}{\delta}\}}\bigr] \right]\nonumber 
\\&=\E\biggl[\E \Bigl[V\Bigl(z+
\mu(z)(t- \max\{s,\rdown{t}{\delta}\})+
\sigma(z)(W_{t} -W_{ \max\{s,\rdown{t}{\delta}\}} ) \Bigr)\Bigr|_{z=X_{s,\max\{s,\rdown{t}{\delta}\}}^{\delta,x} } \Big|\F_{\max\{s,\rdown{t}{\delta}\}}\Bigr]\biggr]\nonumber \\
&
=\E\!\left[\E \Bigl[V \Bigl(z+
\mu(z)(t- \max\{s,\rdown{t}{\delta}\})+
\sigma(z)(W_{t- \max\{s,\rdown{t}{\delta}\}} ) \Bigr) \Bigr]\Bigr|_{ z=X_{s,\max\{s,\rdown{t}{\delta}\}}^{\delta,x} }\right]\nonumber \\
&\leq e^{1.5\barC (t-\max\{s,\rdown{t}{\delta}\})}
\E\!\left[
V\bigl(X_{s,\max\{s,\rdown{t}{\delta}\}}^{\delta,x} \bigr)\right].
\end{align}
This,  induction, and \eqref{y01}  show 
for all $\delta\in\setS$,
 $x\in\R^d$, $s\in[0,T]$, $t\in [s,T]$ 
  that 
$
\E \bigl[V(X_{s,t}^{\delta,x})\bigr]\leq e^{1.5\barC (t-s)}V(x).
$ This, \eqref{r05}, and Jensen's inequality
 show 
for all 
$q\in[1,p]$,
$\delta\in\setSB$,
 $x\in\R^d$, $s\in[0,T]$, $t\in [s,T]$ 
  that 
\begin{equation}\label{r06}
\left \lVert (V(X_{s,t}^{\delta,x}))^{\nicefrac{1}{p}}\right\rVert_q\leq \bigl(e^{1.5\barC\lvert t-s\rvert}V(x)\bigr)^{\nicefrac{1}{p}}.
\end{equation}
This shows 
\eqref{r06b}.   

Next, 
H\"older's inequality,
the
Burkholder-Davis-Gundy inequality (see, e.g., \cite[Lemma~7.7]{dz92}), and
the fact that
$\forall\,t,\tilde{t}\in[0,T]\colon \lvert t-\tilde{t}\rvert^{\nicefrac{1}{2}}\leq\sqrt{T}$
show  for all 
$q\in [2,p]$,
$s\in[0,T]$,
$t\in[s,T]$, 
$\tilde{t}\in[t,T]$,
$ \mathfrak{a}\in \mathrm{span}_{\R}\{ ( \mu( X^{\delta,x}_{s,\max\{s,\delta(r) \}} ) )_{r\in [t,\tilde{t}]}\colon \delta\in \setSB,x\in \R^d\}$,
$ \mathfrak{b}\in \mathrm{span}_{\R}\{ ( \sigma( X^{\delta,x}_{s,\max\{s,\delta(r) \}} ) )_{r\in [t,\tilde{t}]}\colon \delta\in \setSB,x\in \R^d\}$
  that 
\begin{align}\label{m18}
\begin{split}
&
\left\lVert 
\int_{t}^{\tilde{t}}\mathfrak{a}_r\,dr\right\rVert_{q}+
\left\lVert 
\int_{t}^{\tilde{t}}\mathfrak{b}_r\,dW_r\right\rVert_{q}
\leq \lvert \tilde{t}-t\rvert^{\nicefrac{1}{2}}
\left[
\int_{t}^{\tilde{t}}\lVert \mathfrak{a}_r\rVert_{q}^2\,dr \right]^{\nicefrac{1}{2}}
+
q
\left[
\int_{t}^{\tilde{t}}\lVert \mathfrak{b}_r\rVert_{q}^2\,dr\right]^{\nicefrac{1}{2}} \\
&\leq \left[\sqrt{T}+{q}\right]\left[\int_{t}^{\tilde{t}}
\max\{\lVert \mathfrak{a}_r \rVert_q^2,\lVert \mathfrak{b}_r \rVert_q^2\}\,dr\right]^{\nicefrac{1}{2}}
\leq \left[
\sqrt{T}+{q}
\right]\lvert \tilde{t}-t\rvert^{\nicefrac{1}{2}}
\sup_{r\in\left[t,\tilde{t}\right]}\!\max\{\lVert \mathfrak{a}_r\rVert_{q},
\lVert \mathfrak{b}_r\rVert_{q} \}.\end{split} 
\end{align}
This,
\eqref{y01},
the triangle inequality,
 \eqref{r11}, and \eqref{r06} show for all 
$q\in [2,p]$,
$s\in[0,T]$,
$t\in[s,T]$, 
$\tilde{t}\in[t,T]$,
$ \delta\in \setSB$, $x\in \R^d$ 
that
\begin{equation}\begin{split}
& \left\lVert X _{s,\tilde{t}}^{\delta,x}-X _{s,t}^{\delta,x}\right\rVert_q
\leq 
\xeqref{y01}
\left\lVert \int_{t}^{\tilde{t}}\mu ( X^{\delta,x}_{s,\max\{s,\delta(r) \}} )\,dr\right\rVert_q
+\left\lVert \int_{t}^{\tilde{t}}\sigma ( X^{\delta,x}_{s,\max\{s,\delta(r) \}} )\,dW_{r}
\right\rVert_q \\
&
\leq \xeqref{m18}\left[
\sqrt{T}+q
\right]\lvert
\tilde{t}- t\rvert^{\nicefrac{1}{2}}\sup_{r\in\left[t,\tilde{t}\right]}\max_{\zeta\in\{ \mu,\sigma\}}\left\lVert \zeta ( X^{\delta,x}_{s,\max\{s,\delta(r) \}})\right\rVert_{q}
\\
&
\leq \xeqref{r11}\left[
\sqrt{T}+q
\right]\lvert \tilde{t}- t\rvert^{\nicefrac{1}{2}}
\sup_{r\in \left[t,\tilde{t}\right ]}\left\lVert V ( X^{\delta,x}_{s,\max\{s,\delta(r) \}} )^{\nicefrac{1}{p}}\right\rVert_q\\
&\leq\xeqref{r06} \left[
\sqrt{T}+q
\right]
(e^{1.5 \barC T}V(x))^{\nicefrac{1}{p} }\lvert \tilde{t}-t\rvert^{\nicefrac{1}{2}}
.\end{split}
\label{c03d}
\end{equation}
Moreover,  \eqref{m18} and
\eqref{r11} imply for all 
$q\in[2,p]$,
$s\in [0,T] $, $t\in [s,T]$, 
$\mathbb{X},\mathbb{Y}\in 
\{ (  X^{\delta,x}_{s,\max\{s,\delta(r) \}}  )_{r\in [s,t]}\colon \delta\in \setSB,x\in \R^d\}$
that
\begin{align}\begin{split}
&
\left\lVert \int_{s}^{t}
\mu(\mathbb{X}_{r})-
\mu(\mathbb{Y}_{r})\,dr\right\rVert_q
+\left\lVert \int_{s}^{t}
\sigma(\mathbb{X}_{r})-
\sigma(\mathbb{Y}_{r})
\,dW_{r}\right\rVert_{q}\\
&
\leq \xeqref{m18}
\left[
\sqrt{T}+q
\right]\left[
 \int_{s}^{t}\max_{\zeta\in \{\mu,\sigma\}}\lVert 
\zeta(\mathbb{X}_{r})-
\zeta(\mathbb{Y}_{r})\rVert_q^2 dr \right]^{\nicefrac{1}{2}}
 \\
&\leq  \xeqref{r11}c\left[
\sqrt{T}+q
\right]\left[
\int_{s}^{t}
\left\lVert 
\mathbb{X}_{r}-\mathbb{Y}_{r}\right\rVert_{q}^2dr\right]^{\nicefrac{1}{2}}
\leq c\left[
\sqrt{T}+q
\right] \lvert t-s\rvert^{\nicefrac{1}{2}}\sup_{r\in[s,t]}
\left\lVert 
\mathbb{X}_{r}-\mathbb{Y}_{r}\right\rVert_{q}
.\label{m01}\end{split}
\end{align}
This, \eqref{y01}, and
the triangle inequality
 show  for all 
$q\in[2,p]$,
$\delta\in \setSB$,
$s\in[0,T]$, $t\in[s,T]$,  $x,\tilde{x}\in\R^d$ 
 that\begin{equation}\begin{split}
&\left\lVert  X^{\delta,x}_{s,t}
- X^{\delta,\tilde{x}}_{s,t}\right\rVert_{q}
\leq \xeqref{y01}
\lVert 
x-\tilde{x}\rVert+\left\lVert \int_{s}^{t}\mu( X_{s,\max\{s,\rdown{r}{\delta}\}}^{\delta,x})
-\mu( X_{s,\max\{s,\rdown{r}{\delta}\}}^{\delta,\tilde{x}})
\,d{r}\right\rVert_{q}\\
&\qquad
+\left\lVert \int_{s}^{t}\sigma( X_{s,\max\{s,\rdown{r}{\delta}\}}^{\delta,x})
-\sigma( X_{s,\max\{s,\rdown{r}{\delta}\}}^{\delta,\tilde{x}})
\,dW_{r}\right\rVert_{q}\\
&\leq \lVert x-\tilde{x}\rVert
+\xeqref{m01} c\left[
\sqrt{T}+q
\right]\left[
\int_{s}^{t}
\left\lVert 
X^{\delta,x}_{s,\max\{s,\rdown{r}{\delta}\}}-
X^{\delta,\tilde{x}}_{s,\max\{s,\rdown{r}{\delta}\}}\right\rVert_{q}^2dr\right]^{\nicefrac{1}{2}}.
\end{split}\label{k02}\end{equation}
This, \cref{b02}, and integrability in \eqref{r06b}
imply for all 
$\delta\in \setSB$, $q\in[2,p]$,
$s\in[0,T]$, $t\in[s,T]$,  $x,\tilde{x}\in\R^d$ 
 that 
\begin{equation}\label{m24}
\left\lVert
X_{s,t}^{\delta,x}-
X_{s,t}^{\delta,\tilde{x}}
\right\rVert_{q}
\leq 
\sqrt{2}\lVert x-\tilde{x} \rVert
e^{c^2\left[
\sqrt{T}+q
\right]^2 T}.
\end{equation}
Next, \eqref{y01} 
proves  for all $\delta\in\setSB$,
$s\in[0,T]$, $t\in[s,T]$, 
$x\in\R^d$ that 
\begin{align}\begin{split}
&    X_{s,t}^{\delta,x}
-X_{s,t}^{\funcPi,x}  
=    \xeqref{y01}
\int_{s}^{t}
\mu(X_{s,\max\{s,\rdown{r}{\delta}\}}^{\delta,x})
-
\mu(X_{s,r}^{\funcPi,x})
\,dr+
\int_{s}^{t}
\sigma(X_{s,\max\{s,\rdown{r}{\delta}\}}^{\delta,x})
-\sigma(X_{s,r}^{\funcPi,x})
\,dW_{r}  
\\
&=     \int_{s}^{t}
\mu(X_{s,\max\{s,\rdown{r}{\delta}\}}^{\delta,x})
-\mu(X_{s,\max\{s,\rdown{r}{\delta}\}}^{\funcPi,x})
\,dr   +
  \int_{s}^{t}
\sigma(X_{s,\max\{s,\rdown{r}{\delta}\}}^{\delta,x})
-\sigma(X_{s,\max\{s,\rdown{r}{\delta}\}}^{\funcPi,x})
\,dW_{r}   \\
&\quad +   
\int_{s}^{t}
\mu(X_{s,\max\{s,\rdown{r}{\delta}\}}^{\funcPi,x})
-\mu(X_{s,r}^{\funcPi,x})
\,dr   +  
\int_{s}^{t}
\sigma(X_{s,\max\{s,\rdown{r}{\delta}\}}^{\funcPi,x})
-\sigma(X_{s,r}^{\funcPi,x})
\,dW_{r}   .\end{split}\label{m27}
\end{align}
This,
the triangle inequality, 
 \eqref{m01}, 
 \eqref{c03d}, and
 the fact that $\forall\, t\in[0,T],\delta\in\setS\colon
0\leq t- \delta(t)\leq \size{\delta}$
prove for all $\delta\in\setSB$,
$q\in[2,p]$,
$s\in[0,T]$, $t\in[s,T]$, 
$x\in\R^d$ that 
\begin{align}\begin{split}
&\left \lVert X_{s,t}^{\delta,x}
-X_{s,t}^{\funcPi,x}\right\rVert_{q}
\leq \xeqref{m27}\xeqref{m01} c\left[
\sqrt{T}+q
\right]\Biggl[\left( \int_{s}^{t}
\bigl \lVert 
X_{s,\max\{s,\rdown{r}{\delta}\}}^{\delta,x}
-X_{s,\max\{s,\rdown{r}{\delta}\}}^{\funcPi,x}\bigr\rVert_{q}^2\,dr   \right)^{\nicefrac{1}{2}}\\
&\qquad\qquad\qquad\qquad\qquad\qquad\qquad\qquad\qquad
+
\lvert t-s\rvert^{\nicefrac{1}{2}}
\sup_{r\in[s,t]}
\bigl \lVert 
X_{s,\max\{s,\rdown{r}{\delta}\}}^{\funcPi,x}
-X_{s,r}^{\funcPi,x}
\bigr\rVert_{q}\Biggr] \\
&\leq  c\left[
\sqrt{T}+q
\right]
\left(\int_{s}^{t}
\bigl \lVert 
X_{s,\max\{s,\rdown{r}{\delta}\}}^{\delta,x}
-X_{s,\max\{s,\rdown{r}{\delta}\}}^{\funcPi,x}\bigr\rVert_{q}^2\,dr\right)^{\nicefrac{1}{2}}\\
&\qquad\qquad\qquad\qquad\qquad
+
c\left[\sqrt{T}+q\right]\lvert {t}-s\rvert^{\nicefrac{1}{2}}\xeqref{c03d} \left[\sqrt{T}+q\right]
(e^{1.5\barC T}V(x))^{\nicefrac{1}{p} } \size{\delta}^{\nicefrac{1}{2}}.\label{m12b}
\end{split}\end{align}
This, \cref{b02},  integrability in \eqref{r06b}, and the fact that
$\forall\,t,s\in[0,T]\colon \lvert t-s\rvert^{\nicefrac{1}{2}}\leq\sqrt{T}+p $
show for all
$q\in[2,p]$,
 $\delta\in\setSB$,
$s\in[0,T]$, 
$t\in [s,T]$,
$x\in\R^d$ that 
\begin{equation}\begin{split}
&\left\lVert 
X_{s,t}^{\delta,x}
-X_{s,t}^{\funcPi,x}\right\rVert_{q}
\leq 
 \sqrt{2}e^{ c^2\left[\sqrt{T}+q\right]^2 T}\xeqref{m12b}
c\left[
\sqrt{T}+q
\right]^2
(e^{1.5 \barC T}V(x))^{\nicefrac{1}{p} }
\lvert t-s\rvert^{\nicefrac{1}{2}} \size{\delta}^{\nicefrac{1}{2}}\\
&\leq \sqrt{2}e^{ c^2\left[\sqrt{T}+q\right]^2 T}
c\left[
\sqrt{T}+q
\right]^3
(e^{1.5 \barC T}V(x))^{\nicefrac{1}{p} }
\size{\delta}^{\nicefrac{1}{2}}
.\label{m12}
\end{split}\end{equation}
This
proves \eqref{m29}.

\begin{figure}\center
\begin{subfigure}[b]{0.3\textwidth}\centering
\begin{tikzpicture}
\draw (0,0) -- (5,0);
\node [above]  at (.5,0) {$s$};
\node [above]  at (1.5,0) {$\tilde{s}$};
\node [above] at (4,0) {$\bar{s}$};
\draw plot [mark=+]  (.5,0);
\draw plot [mark=+]  (1.5,0);
\draw plot [mark=+]  (4,0);
\end{tikzpicture}\subcaption{There is no grid point on $(s,\bar{s})$.}
\label{f01a}
\end{subfigure}\hspace{2cm}
\begin{subfigure}[b]{0.3\textwidth}
\begin{tikzpicture}
\draw (0,0) -- (5,0);
\node [above]  at (.5,0) {$s$};
\node [above]  at (1.5,0) {$\tilde{s}$};
\node [above]  at (2.5,0) {$\bar{s}$};
\node [above] at (4,0) {$t$};
\draw plot [mark=+]  (.5,0);
\draw plot [mark=+]  (1.5,0);
\draw plot [mark=x]  (2.5,0);
\draw plot [mark=+]  (4,0);
\end{tikzpicture}\subcaption{$\bar{s}$ is the smallest grid point on $(\tilde{s},t]$.}\label{f01b}
\end{subfigure}

\begin{subfigure}[b]{0.3\textwidth}
\begin{tikzpicture}
\draw (0,0) -- (5,0);
\node [above]  at (.5,0) {$s$};
\node [above]  at (2.5,0) {$\tilde{s}$};
\node [above] at (4,0) {$t$};
\draw plot [mark=+]  (.5,0);
\draw plot [mark=x]  (2.5,0);
\draw plot [mark=+]  (4,0);
\end{tikzpicture}
\subcaption{$\tilde{s}$ is a grid point.}
\label{f01c}
\end{subfigure}\hspace{2cm}
\begin{subfigure}[b]{0.3\textwidth}
\begin{tikzpicture}
\draw (0,0) -- (5,0);
\node [above]  at (.5,0) {$s$};
\node [above]  at (2.5,0) {$\bar{s}$};
\node [above]  at (3.5,0) {$\tilde{s}$};
\node [above] at (4.5,0) {$t$};
\draw plot [mark=+]  (.5,0);
\draw plot [mark=x]  (2.5,0);
\draw plot [mark=+]  (3.5,0);
\draw plot [mark=+]  (4.5,0);
\end{tikzpicture}\subcaption{$\bar{s}$ is the largest grid point on $[s,\tilde{s})$.}
\label{f01d}\end{subfigure}
\caption{An illustration for the case distinction.
A  grid point is drawn by
 $\times$. 
}%
\end{figure}
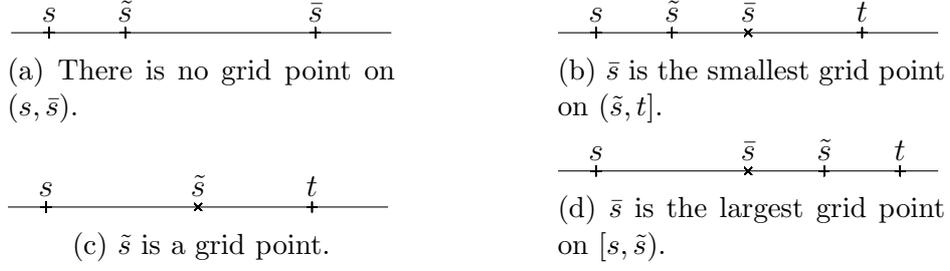

Next, the Markov property of 
the exact solution in \eqref{y01},
 the fact  that the Euler approximations \eqref{y01}
restricted to their grid points satisfy the Markov property,
the disintegration theorem (see, e.g., \cite[Lemma 2.2]{HJK+18}),
\eqref{m24}, and \eqref{c03d} show
for all 
$q\in[2,p]$,
$\delta\in \setSB$, $x\in\R^d$,
$s\in[0,T] $, 
$\tilde{s}\in\delta([0,T])\cap [s,T]$,
$t\in[\tilde{s},T]$ (cf. \cref{f01c}) that
\begin{align}\begin{split}
&\left\lVert X_{s,t}^{\delta,x}-X_{\tilde{s},t}^{\delta,x}\right\rVert _q
=
 \left\lVert \left\lVert X_{\tilde{s},t}^{\delta,\eta}-X_{\tilde{s},t}^{\delta,x}\right\rVert _q\bigr|_{\eta=X_{s,\tilde{s}}^{\delta,x} }\right\rVert _q 
\\
&\leq \xeqref{m24}
\sqrt{2}e^{c^2\left[
\sqrt{T}+q
\right]^2 T}\left\lVert X_{s,\tilde{s}}^{\delta,x} -x \right\rVert_{q}
\leq \sqrt{2}e^{c^2\left[
\sqrt{T}+q
\right]^2 T} \xeqref{c03d}
\left[
\sqrt{T}+q
\right]
(e^{1.5 \barC T }V(x))^{\nicefrac{1}{p} }\lvert \tilde{s}-s\rvert^{\nicefrac{1}{2}}
.\label{s06b}\end{split}
\end{align}
Next, \eqref{y01}, the triangle inequality, 
and \eqref{c03d} 
show that
for all $q\in[2,p]$,
$\delta\in \setS$,
$s\in [0,T]$, 
$\tilde{s}\in[s,T]$,
$\bar{s}\in[\tilde{s},T]$ 
with 
$(s,\bar{s})\cap \delta([0,T]) =\emptyset$ (cf. \cref{f01a})
it holds that
\begin{equation}
X_{s,\bar{s}}^{\delta,x}= \mu(x)(\bar{s}-s) + 
\sigma(x)(W_{\bar{s}}-W_{s}),\quad 
X_{\tilde{s},\bar{s}}^{\delta,x}= \mu(x)(\bar{s}-\tilde{s}) + 
\sigma(x)(W_{\bar{s}}-W_{\tilde{s}}),
\end{equation}
 and 
\begin{align}
\left \lVert  X_{\tilde{s},\bar{s}}^{\delta,x}- X_{s,\bar{s}}^{\delta,x}\right\rVert_{q}=\left \lVert 
\mu(x)(s-\tilde{s}) + \sigma(x)(W_{s}-W_{\tilde{s}})\right\rVert_{q}
\leq \xeqref{c03d}\left[
\sqrt{T}+q
\right](e^{1.5 \barC T } V(x))^{\nicefrac{1}{p}}\lvert s-\tilde{s}\rvert^{\nicefrac{1}{2}}.
\label{m40}
\end{align}
Furthermore, note that (cf. \cref{f01a,f01b}) for all $\delta\in \setS$,
$s\in [0,T]$, 
$\tilde{s}\in[s,T]$,
$t\in [\tilde{s},T]$
with $ (s,\tilde{s})\cap \delta([0,T])=\emptyset$
 there exists $\bar{s}\in[\tilde{s},t]\cap\bigl(\delta([0,T])\cup \{t\}\bigr)$
with 
$(s,\bar{s})\cap \delta([0,T]) =\emptyset$.
This,   the fact  that the Euler approximations \eqref{y01}
restricted to their grid points satisfy the Markov property,
the disintegration theorem (see, e.g., \cite[Lemma 2.2]{HJK+18}),
 \eqref{m24}, and \eqref{m40} prove that
for all 
$q\in[2,p]$,
$\delta\in \setS$,
$s\in [0,T]$, 
$\tilde{s}\in[s,T]$,
$t\in [\tilde{s},T]$,
$\bar{s}=\min( [\tilde{s},t]\cap\bigl(\delta([0,T])\cup \{t\}\bigr) )$
(cf. \cref{f01a,f01b}) it holds that
\begin{align}
&\left\lVert X_{s,t}^{\delta,x}-X_{\tilde{s},t}^{\delta,x}\right\rVert_q=
\left\lVert \left\lVert X_{\bar{s},t}^{\delta,\eta}-X_{\bar{s},t}^{\delta,\tilde{\eta}}\right\rVert_q\bigr|_{
\eta=X_{s,\bar{s}}^{\delta,x},\, \tilde{\eta}=X_{\tilde{s},\bar{s}}^{\delta,x}}\right\rVert_q \nonumber\\
&\leq \xeqref{m24}
\left\lVert 
\left[\sqrt{2}e^{c^2\left[
\sqrt{T}+q
\right]^2 T}\lVert \eta-\tilde{\eta} \rVert\right]
\bigr|_{\eta=X_{s,\bar{s}}^{\delta,x},\, \tilde{\eta}=X_{\tilde{s},\bar{s}}^{\delta,x}}
\right\rVert_{q}\nonumber\\
& \leq \sqrt{2}e^{c^2\left[
\sqrt{T}+q
\right]^2 T}\xeqref{m40}
\left[
\sqrt{T}+q
\right]
(e^{1.5 \barC T } V(x))^{\nicefrac{1}{p}}|\tilde{s}-s|^{\nicefrac{1}{2}}.\label{m09}
\end{align}
This, the triangle inequality, and \eqref{s06b} prove that for all 
$q\in[2,p]$,
$\delta\in \setS$,
$s\in [0,T]$, 
$\tilde{s}\in[s,T]$,
$t\in [\tilde{s},T]$, $\bar{s}\in [s,\tilde{s}]\cap (\delta([0,T]) \cup\{s\})$ with 
$ (\bar{s},\tilde{s})\cap\delta([0,T])=\emptyset$ (cf. \cref{f01d}) it holds that
$\max\{ \lvert s-\bar{s}\rvert, \lvert \bar{s}-\tilde{s}\rvert\}\leq \lvert s-\tilde{s}\rvert $ and
\begin{align}\begin{split}
&\left\lVert   X_{s,t}^{\delta,x}-X_{\tilde{s},t}^{\delta,x} 
\right\rVert_{q}
\leq 
\left\lVert  X_{s,t}^{\delta,x}-X_{\bar{s},t}^{\delta,x} \right\rVert_{q}
+
\left\lVert 
  X_{\bar{s},t}^{\delta,x}-X_{\tilde{s},t}^{\delta,x} 
\right\rVert_{q}\\
&\leq 2\sqrt{2}e^{c^2\left[
\sqrt{T}+q
\right]^2T}
\left[
\sqrt{T}+q
\right]
(e^{1.5 \barC T}V(x))^{\nicefrac{1}{p} }\lvert s-\tilde{s}\rvert^{\nicefrac{1}{2}}.
\end{split}\label{m11b}\end{align}
Next, note that (cf. \cref{f01d}) for all $\delta\in \setS$,
$s\in [0,T]$, 
$\tilde{s}\in[s,T]$,
$t\in [\tilde{s},T]$ there exists $\bar{s}\in [s,\tilde{s}]\cap (\delta([0,T]) \cup\{s\})$ with 
$ (\bar{s},\tilde{s})\cap\delta([0,T])=\emptyset$. 
This, \eqref{m11b}, symmetry, and the strong convergence in \eqref{m29} as $\size{\delta}\to 0$ 
 imply
that for all 
$q\in[2,p]$,
$\delta\in \setSB$,
$s,\tilde{s}\in[0,T]$,
$t \in [\max\{s,\tilde{s}\},T] $ it holds that
\begin{align}
&\left\lVert   X_{s,t}^{\delta,x}-X_{\tilde{s},t}^{\delta,x} 
\right\rVert_{q}
\leq {2}\sqrt{2}e^{c^2\left[
\sqrt{T}+q
\right]^2T}
\left[
\sqrt{T}+q
\right]
(e^{1.5 \barC T}V(x))^{\nicefrac{1}{p} }\lvert s-\tilde{s}\rvert^{\nicefrac{1}{2}}.\label{m11}
\end{align}
Next, the fact that $\forall\, s,\tilde{s},t,\tilde{t}\in [0,T]\colon
\max\{\tilde{s},\tilde{t}\}\leq  \max\{s,\tilde{s},t,\tilde{t}\}\leq  \max\{\tilde{s},\tilde{t}\}
+
\lvert s-\tilde{s}\rvert+\lvert  t-\tilde{t}\rvert
$ and
the fact that
$\forall A,B\in [0,\infty)\colon \sqrt{A+B}\leq 
\sqrt{A}+\sqrt{B}$ show
 for all $s,\tilde{s},t,\tilde{t}\in[0,T]$ 
that
\begin{equation} \label{m31}
\left\lvert \max\{s,\tilde{s},t,\tilde{t}\}-\max\{\tilde{s},\tilde{t}\}\right\rvert^{\nicefrac{1}{2}}
\leq \lvert s-\tilde{s}\rvert^{\nicefrac{1}{2}}+\lvert  t-\tilde{t}\rvert^{\nicefrac{1}{2}}.
\end{equation}
This, the triangle inequality, \eqref{c03d},  \eqref{m11}, \eqref{m24}, 
the fact that $V\geq 1$,
and the fact that
$2\sqrt{2}+2\leq 5$
show that for all $\delta\in \setSB$,
$ s,\tilde{s},t,\tilde{t}\in [0,T]$, $q\in [2,p]$, $x,\tilde{x}\in\R^d$
with $ V(x)\leq V(\tilde{x})$
it holds
 that
\begin{align}
&\left \lVert X^{\delta,x}_{s,\max\{s,t\}}
-
X^{\delta,\tilde{x}}_{\tilde{s},\max\{\tilde{s},\tilde{t}\}}\right\rVert_{q}
\leq  
\left \lVert 
X^{\delta,x}_{s,\max\{s,t\}}-
X^{\delta,x}_{s,\max\{s,\tilde{s},t,\tilde{t}\}}\right\rVert_{q}
+\left \lVert 
X^{\delta,x}_{s,\max\{s,\tilde{s},t,\tilde{t}\}}
-
X^{\delta,x}_{\tilde{s},\max\{s,\tilde{s},t,\tilde{t}\}}\right\rVert_{q} \nonumber\\
&\qquad\qquad
+\left \lVert 
X^{\delta,x}_{\tilde{s},\max\{s,\tilde{s},t,\tilde{t}\}}
-X^{\delta,x}_{\tilde{s},\max\{\tilde{s},\tilde{t}\}}
\right\rVert_{q}
+\left \lVert 
X^{\delta,x}_{\tilde{s},\max\{\tilde{s},\tilde{t}\}}
-X^{\delta,\tilde{x}}_{\tilde{s},\max\{\tilde{s},\tilde{t}\}}\right\rVert_{q}
\nonumber
 \\
&\leq \xeqref{c03d}
\left[
\sqrt{T}+q
\right]
(e^{1.5 \barC T}V(x))^{\nicefrac{1}{p} }\lvert \max\{s,t\}-\max\{s,\tilde{s},t,\tilde{t}\}\rvert^{\nicefrac{1}{2}}
\nonumber\\
&\quad 
+\xeqref{m11}
2\sqrt{2}e^{c^2\left[\sqrt{T}+q\right]^2T}
\left[\sqrt{T}+q\right]
(e^{1.5 \barC T}V(x))^{\nicefrac{1}{p} }\lvert \tilde{s}-s\rvert^{\nicefrac{1}{2}}
\nonumber \\
&\quad +
\xeqref{c03d}
\left[\sqrt{T}+q\right]
(e^{1.5 \barC T}V(x))^{\nicefrac{1}{p} }\lvert \max\{s,\tilde{s},t,\tilde{t}\}-\max\{\tilde{s},\tilde{t}\}\rvert^{\nicefrac{1}{2}} 
+\xeqref{m24}\sqrt{2}\lVert x-\tilde{x} \rVert 
e^{c^2\left[
\sqrt{T}+q
\right]^2 T}\nonumber
 \\
&\leq
{5} e^{c^2\left[
\sqrt{T}+q
\right]^2T}
\left[
\sqrt{T}+q
\right]
e^{1.5 \barC T/p}
\frac{(V(x))^{\nicefrac{1}{p}}
+(V(\tilde{x}))^{\nicefrac{1}{p} }}{2}
\left[
\lvert s-\tilde{s}\rvert^{\nicefrac{1}{2}} +
\lvert t-\tilde{t}\rvert^{\nicefrac{1}{2}}\right]\nonumber\\
&\quad +
\sqrt{2}\lVert x-\tilde{x} \rVert 
e^{c^2\left[
\sqrt{T}+q
\right]^2 T}
.\label{m26}
\end{align}
This and symmetry establish \eqref{m28}.

Next,
\eqref{y01}, the triangle inequality, \eqref{m01}, and \eqref{m24}
prove for all 
$q\in [2,p]$,
$\delta\in\setSB$, $s\in[0,T]$,
$t \in[s,T]$, 
$\tilde{t}\in [t,T]$,
$x,\tilde{x}\in\R^d$ that
\begin{align}
&\left \lVert 
\bigl(X^{\delta,x}_{s,\tilde{t}}-X_{s,t}^{\delta,x}\bigr)
-\bigl(X^{\delta,\tilde{x}}_{s,\tilde{t}}-X^{\delta,\tilde{x}}_{s,t}
\bigr)\right\rVert_{q}\nonumber  \\&=
\xeqref{y01}
\left \lVert 
\int_{t}^{\tilde{t}}
\mu( X_{s,\max\{s,\rdown{r}{\delta}\}}^{\delta,x})
-\mu( X_{s,\max\{s,\rdown{r}{\delta}\}}^{\delta,\tilde{x}})
\,d{r}
+\int_{t}^{\tilde{t}}
\sigma( X_{s,\max\{s,\rdown{r}{\delta}\}}^{\delta,x})-
\sigma( X_{s,\max\{s,\rdown{r}{\delta}\}}^{\delta,\tilde{x}})
\,dW_{r}
\right\rVert_{q} \nonumber\\
&
\leq\xeqref{m01} c\left[
\sqrt{T}+q
\right]\lvert t-\tilde{t}\rvert^{\nicefrac{1}{2}}
\sup_{r\in[0,T]}
\left \lVert 
 X_{s,\max\{s,\rdown{r}{\delta}\}}^{\delta,x}
- X_{s,\max\{s,\rdown{r}{\delta}\}}^{\delta,\tilde{x}}\right\rVert_{q}\nonumber\\
&
\leq\xeqref{m24} c\left[
\sqrt{T}+q
\right]\sqrt{2}
e^{c^2\left[
\sqrt{T}+q
\right]^2 T} \lVert x-\tilde{x}\rVert\lvert t-\tilde{t}\rvert^{\nicefrac{1}{2}}
.\label{m14}
\end{align}
This and symmetry
prove \eqref{m36}.
 
For the rest of this proof we assume that $p\geq 4$.
The triangle inequality, \eqref{c03d}, \eqref{m12}, and \eqref{m24}
show  for all
$\delta\in\setSB$, $s\in[0,T]$,
$t\in[s,T]$, 
$x,y\in\R^d$ that
\begin{align}
&
\left\lVert 
X_{s,t}^{\funcPi,x}-
X_{s,\max\{s,\delta(t)\}}^{\delta,y}
\right\rVert_{p}\nonumber\\
&\leq 
\left\lVert 
X_{s,t}^{\funcPi,x}-X_{s,\max\{s,\delta(t)\}}^{\funcPi,x}\right\rVert_p+
\left\lVert 
X_{s,\max\{s,\delta(t)\}}^{\funcPi,x}-
X_{s,\max\{s,\delta(t)\}}^{\delta,x}
\right\rVert_{p}+
\left\lVert 
X_{s,\max\{s,\delta(t)\}}^{\delta,x}-
X_{s,\max\{s,\delta(t)\}}^{\delta,y}
\right\rVert_{p} \nonumber\\
&
\leq \xeqref{c03d}\left[
\sqrt{T}+p
\right]
(e^{1.5 \barC T}V(x))^{\nicefrac{1}{p} }\size{\delta}^{\nicefrac{1}{2}}
 +\xeqref{m12}
\sqrt{2}
\left[\sqrt{T}+p\right]^3c
e^{ c^2\left[\sqrt{T}+p\right]^2 T}
(e^{1.5 \barC T}V(x))^{\nicefrac{1}{p} }
\size{\delta}^{\nicefrac{1}{2}}\nonumber\\
&\quad 
+\xeqref{m24}
 \sqrt{2}e^{c^2\left[
\sqrt{T}+p
\right]^2 T}\lVert x-y \rVert\nonumber\\
&\leq \left[\sqrt{2}c+1\right]
\left[
\sqrt{T}+p
\right]^3
e^{ c^2\left[\sqrt{T}+p\right]^2 T}
(e^{1.5 \barC T}V(x))^{\nicefrac{1}{p} } 
\size{\delta}^{\nicefrac{1}{2}}+\sqrt{2}e^{c^2\left[
\sqrt{T}+p
\right]^2 T}\lVert x-y \rVert.\label{m15}
\end{align}
This, \eqref{v03},  the triangle inequality,
H\"older's inequality, the fact that $p\geq 4$,
\eqref{m14}, the fact that
$\forall\, t\in[0,T],\delta\in\setSB\colon
0\leq t- \delta(t)\leq \size{\delta}$,
 \eqref{m24}, and the fact that $V\geq 1$
show that for all
$\zeta\in \{\mu,\sigma\}$,
$\delta\in\setSB$, $s\in[0,T]$,
$t,\tilde{t}\in[s,T]$,
$x,y\in\R^d$ with $\tilde{t}= \max\{s,\delta(t)\}$ it holds that
\begin{align}
&
\left\lVert 
\bigl(
\zeta(X_{s,t}^{\funcPi,x})
-
\zeta(X_{s,\tilde{t}}^{\delta,y})
\bigr)
-
\bigl(
\zeta(X_{s,t}^{\funcPi,\tilde{x}})-
\zeta(X_{s,\tilde{t}}^{\delta,\tilde{y}})
\bigr)
\right\rVert_{\frac{p}{2}}
  \nonumber \\
&\leq \xeqref{v03}
c
\left\lVert 
\bigl(X_{s,t}^{\funcPi,x}
-
X_{s,\tilde{t}}^{\delta,y}
\bigr)
-
\bigl(
X_{s,t}^{\funcPi,\tilde{x}}
-X_{s,\tilde{t}}^{\delta,\tilde{y}}\bigr)
\right\rVert_{\frac{p}{2}}
+
b   
 \frac{\left\lVert 
X_{s,t}^{\funcPi,x}-X_{s,\tilde{t}}^{\delta,y}
\right\rVert_{p}+
\left\lVert 
X_{s,t}^{\funcPi,\tilde{x}}-X_{s,\tilde{t}}^{\delta,\tilde{y}}
\right\rVert_{p}}{2} 
\left\lVert 
X_{s,t}^{\funcPi,x}-X_{s,t}^{\funcPi,\tilde{x}}
\right\rVert_{p}
\nonumber 
\\
&\leq 
c
\left\lVert 
\bigl(X_{s,\tilde{t}}^{\funcPi,x}-X_{s,\tilde{t}}^{\delta,y}\bigr)
-
\bigl(
X_{s,\tilde{t}}^{\funcPi,\tilde{x}}-X_{s,\tilde{t}}^{\delta,\tilde{y}}\bigr)
\right\rVert_{\frac{p}{2}}
+
c
\left\lVert 
\bigl(
X_{s,t}^{\funcPi,x}-X_{s,\tilde{t}}^{\funcPi,x}\bigr)
-
\bigl(X_{s,t}^{\funcPi,\tilde{x}}-X_{s,\tilde{t}}^{\funcPi,\tilde{x}}\bigr)
\right\rVert_{\frac{p}{2}}\nonumber
\\
&\quad 
+
b    \frac{\left\lVert 
X_{s,t}^{\funcPi,x}-X_{s,\tilde{t}}^{\delta,y}
\right\rVert_{p}+
\left\lVert 
X_{s,t}^{\funcPi,\tilde{x}}-X_{s,\tilde{t}}^{\delta,\tilde{y}}
\right\rVert_{p}}{2} 
\left\lVert 
X_{s,t}^{\funcPi,x}-X_{s,t}^{\funcPi,\tilde{x}}
\right\rVert_{p}
\nonumber 
\\
&\leq c
\left\lVert 
\bigl(X_{s,\tilde{t}}^{\funcPi,x}-X_{s,\tilde{t}}^{\delta,y}\bigr)
-
\bigl(
X_{s,\tilde{t}}^{\funcPi,\tilde{x}}-X_{s,\tilde{t}}^{\delta,\tilde{y}}\bigr)
\right\rVert_{\frac{p}{2}}
+\xeqref{m14}
 c^2\left[\sqrt{T}+p\right]\sqrt{2}
e^{c^2\left[
\sqrt{T}+p
\right]^2 T} \lVert x-\tilde{x}\rVert\size{\delta}^{\nicefrac{1}{2}}
\nonumber\\
&\quad 
+ b\xeqref{m15}  \Biggl[\frac{\left[\sqrt{2}c+1\right]\left[\sqrt{T}+p\right]^3 e^{c^2\left[
\sqrt{T}+p
\right]^2  T}
e^{1.5 \barC T /p}\left(
(V(x))^{\nicefrac{1}{p}}
+(V(\tilde{x}))^{\nicefrac{1}{p}}\right)
\size{\delta}^{\nicefrac{1}{2}} }{2} \nonumber \\
&\qquad\qquad+
 \sqrt{2}e^{c^2\left[
\sqrt{T}+p
\right]^2  T} 
\frac{\lVert x-y \rVert+
\lVert \tilde{x}-\tilde{y} \rVert}{2}
\Biggr]\xeqref{m24}
 \sqrt{2}e^{c^2\left[
\sqrt{T}+p
\right]^2  T}\lVert x-\tilde{x} \rVert
\nonumber \\
&\leq c
\left\lVert 
\bigl(X_{s,\tilde{t}}^{\funcPi,x}-X_{s,\tilde{t}}^{\delta,y}\bigr)
-
\bigl(
X_{s,\tilde{t}}^{\funcPi,\tilde{x}}-X_{s,\tilde{t}}^{\delta,\tilde{y}}\bigr)
\right\rVert_{\frac{p}{2}}\nonumber\\
&\quad+
2(c^2+bc+b)
\left[\sqrt{T}+p\right]^3
e^{2 c^2\left[\sqrt{T}+p\right]^2 T}
e^{1.5 \barC T /p}
\frac{(V(x))^{\nicefrac{1}{p}}
+(V(\tilde{x}))^{\nicefrac{1}{p} }}{2}
\size{\delta}^{\nicefrac{1}{2}}\lVert x-\tilde{x} \rVert\nonumber \\
&\quad + 2b e^{2c^2\left[
\sqrt{T}+p
\right]^2 T}
\frac{\left(\lVert x-y \rVert+
\lVert \tilde{x}-\tilde{y} \rVert\right) \lVert x-\tilde{x}\rVert}{2}
. \label{m19}
\end{align}
Next, \eqref{y01}
proves for all
$\delta\in\setSB$, $s\in[0,T]$,
$t\in[s,T]$, 
$x,\tilde{x},y,\tilde{y}\in\R^d$ that
\begin{align}\begin{split}
&
\bigl(X^{\funcPi,x}_{s,t}-X^{\delta,y}_{s,t}\bigr)
-\bigl(
X^{\funcPi,\tilde{x}}_{s,t}-X^{\delta,\tilde{y}}_{s,t}\bigr)
=
\bigl(x-y \bigr)-\bigl(\tilde{x}-\tilde{y} \bigr)
\\
&+
\int_{s}^{t}
\bigl(
\mu(X_{s,r}^{\funcPi,x})-
\mu(X_{s,\max\{s,\rdown{r}{\delta}\}}^{\delta,y})
\bigr)
-
\bigl(
\mu(X_{s,r}^{\funcPi,\tilde{x}})
-\mu(X_{s,\max\{s,\rdown{r}{\delta}\}}^{\delta,\tilde{y}})
\bigr)\,
dr 
\\
&\qquad
+
\int_{s}^{t}
\bigl(
\sigma(X_{s,r}^{\funcPi,x})
-
\sigma(X_{s,\max\{s,\rdown{r}{\delta}\}}^{\delta,y})
\bigr)
-
\bigl(
\sigma(X_{s,r}^{\funcPi,\tilde{x}})
-
\sigma(X_{s,\max\{s,\rdown{r}{\delta}\}}^{\delta,\tilde{y}})
\bigr)
\,dW_{r}.\end{split} 
\end{align}
This, the triangle inequality,
\eqref{m18}, 
and
\eqref{m19}
prove for all
$\delta\in\setSB$, $s\in[0,T]$,
$t\in[s,T]$, 
$x,\tilde{x},y,\tilde{y}\in\R^d$ that
\begin{align}
&\left\lVert 
\bigl(X^{\funcPi,x}_{s,t}-X^{\delta,y}_{s,t}\bigr)
-\bigl(
X^{\funcPi,\tilde{x}}_{s,t}-X^{\delta,\tilde{y}}_{s,t}\bigr)\right\rVert_{\frac{p}{2}}
\leq 
 \left\lVert 
\bigl(
x-y \bigr)
-
\bigl(
\tilde{x}-\tilde{y} \bigr)\right\rVert\nonumber
\\
&\quad \xeqref{m18}+\left[\sqrt{T}+p\right]\Biggl[\int_{s}^{t}\max_{\zeta\in \{\mu,\sigma\}}
\Bigl \lVert \bigl(
\zeta(X_{s,r}^{\funcPi,x})
-
\zeta(X_{s,\max\{s,\rdown{r}{\delta}\}}^{\delta,y})
\bigr)\nonumber\\[-12pt]
&\qquad\qquad\qquad\qquad\qquad\qquad\qquad\qquad\qquad
-
\bigl(
\zeta(X_{s,r}^{\funcPi,\tilde{x}})
-
\zeta(X_{s,\max\{s,\rdown{r}{\delta}\}}^{\delta,\tilde{y}})
\bigr)\Bigr \rVert^2_{\frac{p}{2}}\,
dr\Biggr]^{\nicefrac{1}{2}}\nonumber
 \\
&\leq\xeqref{m19} \left\lVert 
\bigl(x-y \bigr)-\bigl(\tilde{x}-\tilde{y} \bigr)\right\rVert
+
c\left[\sqrt{T}+p\right]\Biggl[\int_{s}^{t}
\Bigl\lVert 
\bigl(X_{s,\max\{s,\delta(r)\}}^{\funcPi,x}-X_{s,\max\{s,\delta(r)\}}^{\delta,y}\bigr)\nonumber\\[-12pt]
&\qquad\qquad\qquad\qquad\qquad\qquad\qquad\qquad\qquad\qquad
-
\bigl(
X_{s,\max\{s,\delta(r)\}}^{\funcPi,\tilde{x}}-X_{s,\max\{s,\delta(r)\}}^{\delta,\tilde{y}}\bigr)
\Bigr\rVert^2_{\frac{p}{2}}\,dr\Biggr]^{\nicefrac{1}{2}}\nonumber\\
& +
\left[\sqrt{T}+p\right]\biggl[
2(c^2+bc+b)
\left[\sqrt{T}+p\right]^3
e^{2 c^2\left[\sqrt{T}+p\right]^2 T}
e^{1.5 \barC T /p}
\frac{(V(x))^{\nicefrac{1}{p}}
+(V(\tilde{x}))^{\nicefrac{1}{p} }}{2}
\size{\delta}^{\nicefrac{1}{2}}\lVert x-\tilde{x} \rVert\nonumber \\
&\qquad\qquad\qquad\qquad\qquad + 2b e^{2c^2\left[
\sqrt{T}+p
\right]^2 T}
\frac{\left(\lVert x-y \rVert+
\lVert \tilde{x}-\tilde{y} \rVert\right) \lVert x-\tilde{x}\rVert}{2}
\biggr]\lvert t-s\rvert^{\nicefrac{1}{2}}
.\label{m23}
\end{align}
%
%
%
This, \cref{b02}, and integrability in \eqref{r06b} show for all 
$\delta\in\setSB$, $s\in[0,T]$,
$t\in[s,T]$, 
$x,\tilde{x},y,\tilde{y}\in\R^d$ that
\begin{align}
&\left\lVert 
\bigl(
X^{\funcPi,x}_{s,t}
-X^{\delta,y}_{s,t}
\bigr)
-\bigl(
X^{\funcPi,\tilde{x}}_{s,t}
-X^{\delta,\tilde{y}}_{s,t}
\bigr)
\right\rVert_{\frac{p}{2}}
 \leq 
\sqrt{2}e^{c^2\left[\sqrt{T}+p\right]^2T}\Biggl\{ \left\lVert 
\bigl(x-y \bigr)-\bigl(\tilde{x}-\tilde{y} \bigr)\right\rVert
\nonumber\\
& 
+
\left[\sqrt{T}+p\right]\biggl[
2(c^2+bc+b)
\left[\sqrt{T}+p\right]^3
e^{2 c^2\left[\sqrt{T}+p\right]^2 T}
e^{1.5 \barC T /p}
\frac{(V(x))^{\nicefrac{1}{p}}
+(V(\tilde{x}))^{\nicefrac{1}{p} }}{2}
\size{\delta}^{\nicefrac{1}{2}}\lVert x-\tilde{x} \rVert\nonumber \\
&\qquad\qquad\qquad\qquad\qquad + 2b e^{2c^2\left[
\sqrt{T}+p
\right]^2 T}
\frac{\left(\lVert x-y \rVert+
\lVert \tilde{x}-\tilde{y} \rVert\right) \lVert x-\tilde{x}\rVert}{2}
\biggr]\lvert t-s\rvert^{\nicefrac{1}{2}}\Biggr\}\nonumber\\
&
\leq \sqrt{2}e^{c^2\left[\sqrt{T}+p\right]^2T}
\left\lVert 
\bigl(
x-y \bigr)
-
\bigl(
\tilde{x}-\tilde{y} \bigr)\right\rVert\nonumber\\
&+
2\sqrt{2}(c^2+bc+b)
\left[\sqrt{T}+p\right]^4
e^{3 c^2\left[\sqrt{T}+p\right]^2 T}
e^{1.5 \barC T /p}
\frac{(V(x))^{\nicefrac{1}{p}}
+(V(\tilde{x}))^{\nicefrac{1}{p} }}{2}
\size{\delta}^{\nicefrac{1}{2}}\lVert x-\tilde{x} \rVert
\lvert t-s\rvert^{\nicefrac{1}{2}}
\nonumber \\
& + 2\sqrt{2}b\left[\sqrt{T}+p\right] e^{3 c^2\left[
\sqrt{T}+p
\right]^2 T}
\frac{\left(\lVert x-y \rVert+
\lVert \tilde{x}-\tilde{y} \rVert\right) \lVert x-\tilde{x}\rVert}{2}
\lvert t-s\rvert^{\nicefrac{1}{2}}.\label{m21}
\end{align}
This establishes \eqref{m33}.

Next, \eqref{y01} shows that for all 
$\delta\in \setS$, $x \in\R^d$,
$s\in[0,T]$, $\tilde{s}\in [s,T] $, $\bar{s}\in [\tilde{s},T]$
with $(s,\bar{s})\cap \delta ( [0,T] ) =\emptyset$
(cf. \cref{f01a})
it holds
that 
\begin{align}\begin{split}
X_{s,\bar{s}}^{\delta,x} -
X_{\tilde{s},\bar{s}}^{\delta,x}&= \bigl[ \mu(x)(\bar{s}-s) +\sigma (x)(W_{\bar{s}}-W_{s})\bigr]
-\bigl[
\mu(x)(\bar{s}-\tilde{s}) +\sigma (x)(W_{\bar{s}}-W_{\tilde{s}})\bigr]\\
&=\mu(x)(\tilde{s}-s)+\sigma(x)(W_{\tilde{s}}-W_{s})
\end{split}\end{align}
and
\begin{equation}\begin{split}
&
\bigl(
X_{s,\bar{s}}^{\funcPi,x}
-X_{s,\bar{s}}^{\delta,x} 
\bigr) -
\bigl(X_{\tilde{s},\bar{s}}^{\funcPi,x} -
X_{\tilde{s},\bar{s}}^{\delta,x}  
\bigr)=\bigl( X_{s,\bar{s}}^{\funcPi,x}
-X_{\tilde{s},\bar{s}}^{\funcPi,x} \bigr)
-\bigl(X_{s,\bar{s}}^{\delta,x} -
X_{\tilde{s},\bar{s}}^{\delta,x}  \bigr)\\
&=\left[
\int_{s}^{\bar{s}}
\mu(\procX_{s,r}^{\funcPi,x})\,dr
+\int_{s}^{\bar{s}}
\sigma(\procX_{s,r}^{\funcPi,x})\,dW_{r}\right]
-\left[
\int_{\tilde{s}}^{\bar{s}}
\mu(X_{\tilde{s},r}^{\funcPi,x})\,dr+
\int_{\tilde{s}}^{\bar{s}}
\sigma(X_{\tilde{s},r}^{\funcPi,x})\,dW_{r}\right]\\
&\qquad\qquad-\left[\int_{s}^{\tilde{s}}\mu(x)\,dr+\int_{s}^{\tilde{s}}\sigma(x)\,dW_{r}\right]\\
&=\left[\int_{s}^{\tilde{s}}\mu(\procX_{s,r}^{\funcPi,x})-\mu(x)\,dr
+\int_{s}^{\tilde{s}}
\sigma(\procX_{s,r}^{\funcPi,x})-\sigma(x)\,dW_{r}\right]\\
&\qquad\qquad+\left[\int_{\tilde{s}}^{\bar{s}}
\mu(\procX_{s,r}^{\funcPi,x})-\mu(X_{\tilde{s},r}^{\funcPi,x})\,dr
+\int_{\tilde{s}}^{\bar{s}}
\sigma(\procX_{s,r}^{\funcPi,x})-\sigma(X_{\tilde{s},r}^{\funcPi,x})\,dW_{r}\right].
\end{split}\label{m22}\end{equation}
This,
 the triangle inequality,
\eqref{m01}, 
\eqref{c03d}, 
and \eqref{m11} 
 show that for all $q\in [2,p/2]$, $\delta\in \setS$, $x \in\R^d$,
$s\in[0,T]$, $\tilde{s}\in [s,T] $, $\bar{s}\in [\tilde{s},T]$
with $(s,\bar{s})\cap \delta ( [0,T] ) =\emptyset$ (cf. \cref{f01a})
it holds
that
$\max\{\lvert \tilde{s}-s \rvert, \lvert\bar{s} -\tilde{s}\rvert\}\leq \size{\delta}$ and
\begin{align}\begin{split}
&
\left\lVert 
\bigl(X_{s,\bar{s}}^{\funcPi,x}-X_{s,\bar{s}}^{\delta,x}\bigr) -\bigl(X_{\tilde{s},\bar{s}}^{\funcPi,x} - X_{\tilde{s},\bar{s}}^{\delta,x}\bigr)
\right\rVert_{q}\\
&
\leq \xeqref{m22}\xeqref{m01}
c\left[
\sqrt{T}+q
\right]\size{\delta}^{\nicefrac{1}{2}}
\left[
\sup_{r\in [s,\tilde{s}]} \left\lVert X_{s,r}^{\funcPi,x}-x\right\rVert_{q}
+
\sup_{r\in \left[\tilde{s},\bar{s}\right]} \left\lVert X_{s,r}^{\funcPi,x}-X_{\tilde{s},r}^{\funcPi,x}
\right\rVert_{q}\right] \\
&\leq c\left[
\sqrt{T}+q
\right]\size{\delta}^{\nicefrac{1}{2}}\biggl[\xeqref{c03d}
\left[
\sqrt{T}+q
\right]
(e^{ 1.5 \barC T}V(x))^{\nicefrac{1}{p} }
\lvert s-\tilde{s}\rvert^{\nicefrac{1}{2}}
\\&\qquad\qquad\qquad\qquad\qquad+
\xeqref{m11}
{2}\sqrt{2}e^{c^2\left[
\sqrt{T}+q
\right]^2T}
\left[
\sqrt{T}+q
\right]
(e^{ 1.5 \barC T}V(x))^{\nicefrac{1}{p} }\lvert s-\tilde{s}\rvert^{\nicefrac{1}{2}}
\biggr].\end{split}
\end{align}
This and the fact that $1+2\sqrt{2}\leq 4$
show that for all $q\in [2,p/2]$, $\delta\in \setS$, $x \in\R^d$,
$s\in[0,T]$,
$\tilde{s}\in[s,T]$, 
 $\bar{s}\in [\tilde{s},T]$ 
with $(s,\bar{s})\cap \delta ( [0,T] ) =\emptyset$ (cf. \cref{f01a})
it holds
that
\begin{align}\begin{split}
&
\left\lVert 
\bigl(X_{s,\bar{s}}^{\funcPi,x}-X_{s,\bar{s}}^{\delta,x} \bigr) -\bigl(
X_{\tilde{s},\bar{s}}^{\funcPi,x} -X_{\tilde{s},\bar{s}}^{\delta,x}\bigr)
\right\rVert_{q}\\
&\leq 
{4}c\left[
\sqrt{T}+q
\right]^2
e^{c^2\left[
\sqrt{T}+q
\right]^2T}
(e^{ 1.5 \barC T}V(x))^{\nicefrac{1}{p} }\lvert s-\tilde{s}\rvert^{\nicefrac{1}{2}}
\size{\delta}^{\nicefrac{1}{2}}
.
\end{split}\label{m30}\end{align}
This, the fact  that the Euler approximations 
in \eqref{y01}
restricted to their grid points satisfy the Markov property,
the disintegration theorem (see, e.g., \cite[Lemma 2.2]{HJK+18}),
\eqref{m21}, 
 the fact that $\forall\,t,s\in[0,T]\colon \lvert t-s\rvert^{\nicefrac{1}{2}}\leq \sqrt{T}$, the triangle inequality,
H\"older's inequality, the fact that $p\geq 4$,
\eqref{r06}, 
\eqref{m11}, 
\eqref{m12}, 
 the fact that $V\geq 1$,
and the fact that
$\sqrt{2}\cdot 4c+ 2\sqrt{2} (c^2+bc+b) 2\sqrt{2}+\sqrt{2}b\cdot 2\sqrt{2}c\cdot 2\sqrt{2}
\leq 6c+8(c^2+bc+b)+12 bc
\leq 20(c^2+bc+b+c)=20(b+c)(c+1)
$
show that 
for all  $\delta\in \setS$, $x \in\R^d$,
$s\in[0,T]$, 
$\tilde{s}\in [s,T]$,
$t\in [\tilde{s},T]$,
$\bar{s}\in[\tilde{s},t]\cap\bigl(\delta([0,T])\cup \{t\}\bigr)$
with 
$(s,\bar{s})\cap \delta([0,T]) =\emptyset$ (cf. \cref{f01a,f01b})
it holds
that
\begin{align}
&\left\lVert 
\bigl(
X_{s,t}^{\funcPi,x}-
X_{s,t}^{\delta,x} 
\bigr) -
\bigl(X_{\tilde{s},t}^{\funcPi,x}-X_{\tilde{s},t}^{\delta,x} \bigr)\right\rVert_{\frac{p}{2}}\nonumber \\
&=\left\lVert \left\lVert 
\bigl(
X_{\bar{s},t}^{\funcPi,\mathbf{x}}-X_{\bar{s},t}^{\delta,\mathbf{y}}
\bigr)
 -
\bigl(
X_{\bar{s},t}^{\funcPi,\mathbf{\tilde{x}}}
-X_{\bar{s},t}^{\delta,\mathbf{\tilde{y}}} \bigr)\right\rVert_{\frac{p}{2}}\bigr|_{
\substack{
\mathbf{x}=X_{s,\bar{s}}^{\funcPi,x},
\mathbf{y} =X_{s,\bar{s}}^{\delta,x},
\mathbf{\tilde{x}}=
X^{\funcPi,x}_{\tilde{s},\bar{s}},
\mathbf{\tilde{y}}=X_{\tilde{s},\bar{s}}^{\delta,x}
}}
\right\rVert_{\frac{p}{2}}\nonumber \\
&\leq
\xeqref{m21}
\Biggl \lVert  \Biggl[\sqrt{2}e^{c^2\left[\sqrt{T}+p\right]^2 T}
\left\lVert 
\bigl(
\mathbf{x}-\mathbf{y} \bigr)
-
\bigl(
\tilde{\mathbf{x}}-\tilde{\mathbf{y}} \bigr)\right\rVert\nonumber\\
&\quad+2\sqrt{2}(c^2+bc+b)
\left[\sqrt{T}+p\right]^5
e^{3c^2\left[\sqrt{T}+p\right]^2 T}
e^{1.5\barC T /p}
\frac{(V(\mathbf{x}))^{\nicefrac{1}{p}}
+(V(\tilde{\mathbf{x}}))^{\nicefrac{1}{p} }}{2}
\size{\delta}^{\nicefrac{1}{2}}\lVert \mathbf{x}-\tilde{\mathbf{x}} \rVert
\nonumber \\
&\quad  +2\sqrt{2}b\left[\sqrt{T}+p\right]^2 e^{3c^2\left[
\sqrt{T}+p
\right]^2 T}
\frac{\left(\lVert \mathbf{x}-\mathbf{y} \rVert+
\lVert \tilde{\mathbf{x}}-\tilde{\mathbf{y}} \rVert\right) \lVert \mathbf{x}-\tilde{\mathbf{x}}\rVert}{2}
\Biggr]\Bigr|_{
\substack{
\mathbf{x}=X_{s,\bar{s}}^{\funcPi,x},
\mathbf{y} =X_{s,\bar{s}}^{\delta,x},
\mathbf{\tilde{x}}=
X^{\funcPi,x}_{\tilde{s},\bar{s}},
\mathbf{\tilde{y}}=X_{\tilde{s},\bar{s}}^{\delta,x}
}}
\Biggr \rVert_{\frac{p}{2}}\nonumber \\
&\leq 
\sqrt{2}e^{c^2\left[\sqrt{T}+p\right]^2 T}
\left\lVert 
\bigl(
X_{s,\bar{s}}^{\funcPi,x}-X_{s,\bar{s}}^{\delta,x}
\bigr)-
\bigl(
X^{\funcPi,x}_{\tilde{s},\bar{s}}-X_{\tilde{s},\bar{s}}^{\delta,x}
\bigr)\right\rVert_{\frac{p}{2}}\nonumber\\
&\quad+2\sqrt{2}(c^2+bc+b)
\left[\sqrt{T}+p\right]^5
e^{3c^2\left[\sqrt{T}+p\right]^2 T}\nonumber\\
&\qquad\qquad
\left\lVert 
e^{ 1.5 \barC T/p}\frac{
(V(X_{s,\bar{s}}^{\funcPi,x}))^{\nicefrac{1}{p}}+(V(X_{\tilde{s},\bar{s}}^{\funcPi,x}))^{\nicefrac{1}{p}}}{2}
\right\rVert_{p}
\size{\delta}^{\nicefrac{1}{2}}\left\lVert X_{s,\bar{s}}^{\funcPi,x}-X_{\tilde{s},\bar{s}}^{\funcPi,x}\right\rVert_{p}
\nonumber \\
&\quad  +\sqrt{2}b\left[\sqrt{T}+p\right]^2 e^{3c^2\left[
\sqrt{T}+p
\right]^2 T}
\left(
\left\lVert 
X_{s,\bar{s}}^{\funcPi,x} -X_{s,\bar{s}}^{\delta,x}
\right\rVert_{p}
+
\left\lVert X_{\tilde{s},\bar{s}}^{\funcPi,x}-X_{\tilde{s},\bar{s}}^{\delta,x} \right\rVert_{p}
\right) \left\lVert X_{s,\bar{s}}^{\funcPi,x}-X_{\tilde{s},\bar{s}}^{\funcPi,x}\right\rVert_{p}
\nonumber\\
&\leq 
\sqrt{2}e^{c^2\left[\sqrt{T}+p\right]^2 T}\xeqref{m30}
4 c\left[\sqrt{T}+p\right]^2
e^{c^2\left[
\sqrt{T}+p
\right]^2T}
(e^{ 1.5 \barC T}V(x))^{\nicefrac{1}{p} }\lvert s-\tilde{s}\rvert^{\nicefrac{1}{2}}
\size{\delta}^{\nicefrac{1}{2}}\nonumber\\
&\quad+ 2\sqrt{2}(c^2+bc+b)
\left[\sqrt{T}+p\right]^5
e^{3c^2\left[\sqrt{T}+p\right]^2 T}\nonumber\\
&\qquad\qquad\xeqref{r06}
e^{ 3 \barC T/p}
(V(x))^{\nicefrac{1}{p}}
\size{\delta}^{\nicefrac{1}{2}}\xeqref{m11}
{2}\sqrt{2}e^{c^2\left[
\sqrt{T}+p
\right]^2T}
\left[
\sqrt{T}+p
\right]
(e^{1.5 \barC T}V(x))^{\nicefrac{1}{p} }\lvert s-\tilde{s}\rvert^{\nicefrac{1}{2}}
\nonumber \\
&\quad  +\sqrt{2}b\left[\sqrt{T}+p\right]^2 e^{3c^2\left[
\sqrt{T}+p
\right]^2T}
2
\xeqref{m12}
\sqrt{2}e^{ c^2\left[
\sqrt{T}+p
\right]^2 T}c\left[\sqrt{T}+p\right]^3 
(e^{1.5 \barC T}V(x))^{\nicefrac{1}{p} }
 \size{\delta}^{\nicefrac{1}{2}}\nonumber\\
&\qquad\qquad \xeqref{m11} {2}\sqrt{2}e^{c^2\left[
\sqrt{T}+p
\right]^2T}
\left[
\sqrt{T}+p
\right]
(e^{1.5 \barC T}V(x))^{\nicefrac{1}{p} }\lvert s-\tilde{s}\rvert^{\nicefrac{1}{2}} \nonumber\\
&\leq 20(b+c)(c+1)e^{5 c^2\left[\sqrt{T}+p\right]^2 T}
\left[\sqrt{T}+p\right]^6
e^{ 4.5 \barC T/p}
(V(x))^{\nicefrac{2}{p}}
\lvert s-\tilde{s}\rvert^{\nicefrac{1}{2}}
 \size{\delta}^{\nicefrac{1}{2}}
.\label{m25b}
\end{align}
Furthermore, note that (cf. \cref{f01a,f01b}) for all $\delta\in \setS$,
$s\in [0,T]$, 
$\tilde{s}\in[s,T]$,
$t\in [\tilde{s},T]$
with $ (s,\tilde{s})\cap \delta([0,T])=\emptyset$
 there exists $\bar{s}\in[\tilde{s},t]\cap\bigl(\delta([0,T])\cup \{t\}\bigr)$
with 
$(s,\bar{s})\cap \delta([0,T]) =\emptyset$.
This and \eqref{m25b} show
for all $\delta\in \setS$,
$s\in [0,T]$, 
$\tilde{s}\in[s,T]$,
$t\in [\tilde{s},T]$ that
\begin{align}
&\left\lVert 
\bigl(
X_{s,t}^{\funcPi,x}-
X_{s,t}^{\delta,x} 
\bigr) -
\bigl(X_{\tilde{s},t}^{\funcPi,x}-X_{\tilde{s},t}^{\delta,x} \bigr)\right\rVert_{\frac{p}{2}}\nonumber \\
&\leq 20(b+c)(c+1)e^{5 c^2\left[\sqrt{T}+p\right]^2 T}
\left[\sqrt{T}+p\right]^6
e^{ 4.5 \barC T/p}
(V(x))^{\nicefrac{2}{p}}
\lvert s-\tilde{s}\rvert^{\nicefrac{1}{2}}
 \size{\delta}^{\nicefrac{1}{2}}
.\label{m25}
\end{align}
Next, 
the fact  that the Euler approximations 
in \eqref{y01}
restricted to their grid points satisfy the Markov property,
the disintegration theorem (see, e.g., \cite[Lemma 2.2]{HJK+18}),
\eqref{m21}, 
the fact that $\forall\,t,s\in[0,T]\colon \lvert t-s\rvert^{\nicefrac{1}{2}}\leq \sqrt{T}$, the triangle inequality,
H\"older's inequality, the fact that $p\geq 4$,
 \eqref{m12}, 
\eqref{r06}, 
 \eqref{c03d}, 
the fact that $V\geq 1$,
and the fact that
$\sqrt{2}\cdot\sqrt{2}c+2\sqrt{2}(c^2+bc+b)+\sqrt{2}b\sqrt{2}c \leq 2c+3(c^2+bc+b)+2bc\leq 5(c^2+bc+b+c)=5(b+c)(c+1)$
 show 
 for all $\delta\in\setS$, $s\in [0,T]$, $\tilde{s}\in \delta([0,T])\cap[s,T]$, $t\in [\tilde{s},T]$ (cf. \cref{f01c}) that 
\begin{align}
&\left\lVert 
\bigl(X_{s,t}^{\funcPi,x}
-X_{s,t}^{\delta,x} 
\bigr)
-\bigl(
X_{\tilde{s},t}^{\funcPi,x} -
X_{\tilde{s},t}^{\delta,x} 
\bigr)
\right\rVert_{\frac{p}{2}}=
\left\lVert \left \lVert 
\bigl(X_{\tilde{s},t}^{\funcPi,\mathbf{x}}-X_{\tilde{s},t}^{\delta,\mathbf{y}}\bigr) -
\bigl(X_{\tilde{s},t}^{\funcPi,x} -X_{\tilde{s},t}^{\delta,x}
\bigr)
\right\rVert_{\frac{p}{2}}
\bigr|_{
\mathbf{x}= X^{\funcPi,x}_{s,\tilde{s}},\mathbf{y}= X^{\delta,x}_{s,\tilde{s}}
}
\right\rVert_{\frac{p}{2}}\nonumber \\
&\leq\xeqref{m21} \Biggl \lVert \biggl[\sqrt{2}e^{c^2\left[\sqrt{T}+p\right]^2T}
\left\lVert 
\bigl(
\mathbf{x}-\mathbf{y} \bigr)
-
\bigl({x}-{x} \bigr)\right\rVert\nonumber\\
&\quad+ 2\sqrt{2}(c^2+bc+b)
\left[\sqrt{T}+p\right]^5
e^{3 c^2\left[\sqrt{T}+p\right]^2 T}
e^{1.5 \barC T /p}
\frac{(V(\mathbf{x}))^{\nicefrac{1}{p}}
+(V({x}))^{\nicefrac{1}{p} }}{2}
\size{\delta}^{\nicefrac{1}{2}}\lVert \mathbf{x}-{x} \rVert
\nonumber \\
&\quad  +2\sqrt{2}b\left[\sqrt{T}+p\right]^2 
e^{3 c^2\left[\sqrt{T}+p\right]^2 T}
\frac{\left(\lVert \mathbf{x}-\mathbf{y} \rVert+
\lVert {x}-{x} \rVert\right) \lVert \mathbf{x}-{x}\rVert}{2}
\biggr]
\bigr|_{
\mathbf{x}= X^{\funcPi,x}_{s,\tilde{s}},\mathbf{y}= X^{\delta,x}_{s,\tilde{s}}
 }\Biggr \rVert_{{\frac{p}{2}}}\nonumber \\
&\leq \sqrt{2}e^{c^2\left[\sqrt{T}+p\right]^2 T}
\left\lVert 
X^{\funcPi,x}_{s,\tilde{s}}- X^{\delta,x}_{s,\tilde{s}} 
\right\rVert_{\frac{p}{2}}\nonumber\\
&\quad+2\sqrt{2}(c^2+bc+b)\nonumber\\
&\qquad\qquad
\left[\sqrt{T}+p\right]^5
e^{3 c^2\left[\sqrt{T}+p\right]^2 T}
e^{1.5\barC T /p}
\left\lVert\frac{(V(X^{\funcPi,x}_{s,\tilde{s}}))^{\nicefrac{1}{p}}
+(V({x}))^{\nicefrac{1}{p} }}{2}\right\rVert_{p}
\size{\delta}^{\nicefrac{1}{2}}\left\lVert X^{\funcPi,x}_{s,\tilde{s}}-{x} \right\rVert_{p}
\nonumber \\
&\quad  + \sqrt{2}b\left[\sqrt{T}+p\right]^2 e^{3c^2\left[
\sqrt{T}+p
\right]^2 T}
\left\lVert X^{\funcPi,x}_{s,\tilde{s}}-X^{\delta,x}_{s,\tilde{s}} \right\rVert_{p}
\left \lVert X^{\funcPi,x}_{s,\tilde{s}}-{x}\right\rVert_{p}\nonumber\\
&\leq \sqrt{2}e^{c^2\left[\sqrt{T}+p\right]^2 T}
\xeqref{m12}
 \sqrt{2}e^{ c^2\left[\sqrt{T}+p\right]^2 T}c\left[\sqrt{T}+p\right]^2
(e^{1.5 \barC T}V(x))^{\nicefrac{1}{p} }
\lvert s-\tilde{s}\rvert^{\nicefrac{1}{2}} \size{\delta}^{\nicefrac{1}{2}}\nonumber\\
&\quad+ 2\sqrt{2}(c^2+bc+b)
\left[\sqrt{T}+p\right]^5
e^{3c^2\left[\sqrt{T}+p\right]^2 T}\xeqref{r06}
e^{3 \barC T /p}(V({x}))^{\nicefrac{1}{p} }
\size{\delta}^{\nicefrac{1}{2}}\nonumber\\
&\qquad\qquad\xeqref{c03d}
\left[\sqrt{T}+p\right]
(e^{1.5\barC T}V(x))^{\nicefrac{1}{p} }\lvert s-\tilde{s}\rvert^{\nicefrac{1}{2}}
\nonumber \\
&\quad +\sqrt{2}b
\left[\sqrt{T}+p\right]^2
e^{3c^2\left[\sqrt{T}+p\right]^2 T}\xeqref{m12} 
\sqrt{2}e^{ c^2\left[\sqrt{T}+p\right]^2 T}c\left[\sqrt{T}+p\right]^3
(e^{1.5 \barC T}V(x))^{\nicefrac{1}{p} }
 \size{\delta}^{\nicefrac{1}{2}}
\nonumber\\
&\qquad\qquad
\xeqref{c03d}\left[
\sqrt{T}+p
\right]
(e^{1.5\barC T}V(x))^{\nicefrac{1}{p} }\lvert s-\tilde{s}\rvert^{\nicefrac{1}{2}}\nonumber\\
&\leq 5(b+c)(c+1)\left[\sqrt{T}+p\right]^6
e^{4c^2\left[\sqrt{T}+p\right]^2 T}
e^{4.5 \barC T/p}(V(x))^{\nicefrac{2}{p} }
\lvert {s}-\tilde{s}\rvert^{\nicefrac{1}{2}} \size{\delta}^{\nicefrac{1}{2}}.
\label{m34}\end{align}
This, the triangle inequality, and \eqref{m25} show that for all
$\delta\in\setS$,
$s\in [0,T]$, 
$\tilde{s}\in[s,T]$,
$t\in [\tilde{s},T]$, $\bar{s}\in [s,\tilde{s}]\cap (\delta([0,T]) \cup\{s\})$ with 
$ (\bar{s},\tilde{s})\cap\delta([0,T])=\emptyset$ (cf. \cref{f01d}) it holds that
$\max\{ \lvert s-\bar{s}\rvert, \lvert \bar{s}-\tilde{s}\rvert\}\leq \lvert s-\tilde{s}\rvert $ and
\begin{align}\begin{split}
&\left \lVert \bigl(X_{\tilde{s},t}^{\funcPi,x} -
X_{\tilde{s},t}^{\delta,x}
\bigr)
-\bigl(X_{s,t}^{\funcPi,x}-X_{s,t}^{\delta,x}\bigr)\right\rVert_{\frac{p}{2}}
\\
&
\leq \left\lVert 
\bigl(
X_{\tilde{s},t}^{\funcPi,x} -X_{\tilde{s},t}^{\delta,x}
\bigr)
-
\bigl(X_{\bar{s},t}^{\funcPi,x}-X_{\bar{s},t}^{\delta,x}\bigr)\right\rVert_{\frac{p}{2}}
+\left\lVert \bigl(X_{\bar{s},t}^{\funcPi,x}-X_{\bar{s},t}^{\delta,x}\bigr)
-\bigl(X_{s,t}^{\funcPi,x}-X_{s,t}^{\delta,x}\bigr)
\right\rVert_{\frac{p}{2}}  \\
&\leq \xeqref{m25}20 (b+c)(c+1)e^{5 c^2\left[\sqrt{T}+p\right]^2 T}
\left[\sqrt{T}+p\right]^6
e^{ 4.5 \barC T/p}
(V(x))^{\nicefrac{2}{p}}
\lvert \tilde{s}-\bar{s}\rvert^{\nicefrac{1}{2}}
 \size{\delta}^{\nicefrac{1}{2}}\\
&\quad +\xeqref{m34}5 (b+c)(c+1)\left[\sqrt{T}+p\right]^6
e^{4c^2\left[\sqrt{T}+p\right]^2 T}
e^{4.5 \barC T/p}(V(x))^{\nicefrac{2}{p} }
\lvert \bar{s}-s\rvert^{\nicefrac{1}{2}} \size{\delta}^{\nicefrac{1}{2}}\\
&\leq
25 (b+c)(c+1)\left[\sqrt{T}+p\right]^6
e^{5c^2\left[\sqrt{T}+p\right]^2 T}
e^{4.5 \barC T/p}(V(x))^{\nicefrac{2}{p} }
\lvert s-\tilde{s}\rvert^{\nicefrac{1}{2}} \size{\delta}^{\nicefrac{1}{2}}.
\end{split}\label{m10b}\end{align}
Next, note that (cf. \cref{f01d}) for all $\delta\in \setS$,
$s\in [0,T]$, 
$\tilde{s}\in[s,T]$,
$t\in [\tilde{s},T]$ there exists $\bar{s}\in [s,\tilde{s}]\cap (\delta([0,T]) \cup\{s\})$ with 
$ (\bar{s},\tilde{s})\cap\delta([0,T])=\emptyset$.
This, \eqref{m10b}, and symmetry imply 
for all $\delta\in\setS$,
$s,\tilde{s}\in[0,T]$, $t\in [\max\{s,\tilde{s}\},T]$
that
\begin{align}\begin{split}
&
\left \lVert \bigl(
X_{\tilde{s},t}^{\funcPi,x} -
X_{\tilde{s},t}^{\delta,x}
\bigr)
-\bigl(X_{s,t}^{\funcPi,x}-X_{s,t}^{\delta,x} \bigr)\right\rVert_{\frac{p}{2}}
\\
&
\leq 
25 (b+c)(c+1)  \left[\sqrt{T}+p\right]^6 e^{5 c^2\left[\sqrt{T}+p\right]^2 T}
e^{ 4.5 \barC T/p}(V(x))^{\nicefrac{2}{p}}
\lvert s-\tilde{s}\rvert^{\nicefrac{1}{2}}\size{\delta}^{\nicefrac{1}{2}}.
\label{m10}
\end{split}\end{align}
Next, \eqref{y01}, 
 the triangle inequality,  
\eqref{m01}, 
\eqref{c03d}, 
and \eqref{m12} 
prove for all $\delta\in \setS$,
 $ s\in[0,T]$, $t\in[s,T]$, 
$\tilde{t}\in[t,T]$,
$x\in\R^d$ that
\begin{align}
&\left\lVert 
\bigl(X^{\funcPi,x}_{s,\tilde{t}}-
X_{s,\tilde{t}}^{\delta,x}\bigr)-\bigl(X^{\funcPi,x}_{s,t}-X_{s,t}^{\delta,x}\bigr)\right\rVert_{\frac{p}{2}}\nonumber \\
&
= \xeqref{y01} \left\lVert \int_{t}^{\tilde{t}}
\mu(X^{\funcPi,x}_{s,r})-\mu(X^{\delta,x}_{s,\max\{s,\rdown{r}{\delta}\}})
\,dr
+\int_{t}^{\tilde{t}}
\sigma(X^{\funcPi,x}_{s,r})
-\sigma(X^{\delta,x}_{s,\max\{s,\rdown{r}{\delta}\}})
\,dW_{r}\right\rVert_{\frac{p}{2}}\nonumber \\
&\leq \xeqref{m01}
 c\left[\sqrt{T}+p\right] \lvert t-\tilde{t}\rvert^{\nicefrac{1}{2}} \sup_{r\in\left[t,\tilde{t}\right]}
\left \lVert
X^{\funcPi,x}_{s,r}
-X^{\delta,x}_{s,\max\{s,\rdown{r}{\delta}\}}
\right\rVert_{\frac{p}{2}}\nonumber \\
&\leq
  c\left[\sqrt{T}+p\right] \lvert t-\tilde{t}\rvert^{\nicefrac{1}{2}} \nonumber
\\&\qquad\qquad
\sup_{r\in\left[t,\tilde{t}\right]} \left[
\left \lVert
X^{\funcPi,x}_{s,r}
-X^{\funcPi,x}_{s,\max\{s,\rdown{r}{\delta}\}}
\right\rVert_{\frac{p}{2}}
+\left\lVert 
X^{\funcPi,x}_{s,\max\{s,\rdown{r}{\delta}\}}
-
X^{\delta,x}_{s,\max\{s,\rdown{r}{\delta}\}}\right\rVert_{\frac{p}{2}}
\right]\nonumber 
 \\
&\leq 
 c\left[\sqrt{T}+p\right]\lvert t-\tilde{t}\rvert^{\nicefrac{1}{2}} 
\biggl[\xeqref{c03d}\left[
\sqrt{T}+p
\right]
(e^{1.5 \barC T}V(x))^{\nicefrac{1}{p} }\size{\delta}^{\nicefrac{1}{2}}\nonumber\\
&\qquad\qquad\qquad\qquad\qquad\qquad
+\xeqref{m12}
\sqrt{2}e^{ c^2\left[\sqrt{T}+p\right]^2 T}
c\left[\sqrt{T}+p\right]^3
(e^{1.5 \barC T}V(x))^{\nicefrac{1}{p} }
 \size{\delta}^{\nicefrac{1}{2}}\biggr]
\nonumber \\
&\leq \left[\sqrt{2}c^2+c\right]
\left[\sqrt{T}+p\right]^4
e^{ c^2\left[\sqrt{T}+p\right]^2 T}
(e^{1.5 \barC T}V(x))^{\nicefrac{1}{p} }\size{\delta}^{\nicefrac{1}{2}} \lvert t-\tilde{t}\rvert^{\nicefrac{1}{2}}.\label{m35}
\end{align}
This, symmetry, the triangle inequality, 
\eqref{m10}, 
\eqref{m21} (applied with 
$(x,y,\tilde{x},\tilde{y})\gets (x,x,\tilde{x},\tilde{x}) $ in the notation of
\eqref{m21}), 
the fact that
$\forall\,t,s\in[0,T]\colon \lvert t-s\rvert^{\nicefrac{1}{2}}\leq \sqrt{T}+p$,
 \eqref{m31},
the fact that $V\geq 1$,
and the fact that
$2(\sqrt{2}c^2+c)+25 (b+c)(c+1)
+2\sqrt{2}(c^2+bc+b)
\leq 25(b+c)(c+1)+6(c^2+bc+b+c)= 31 (b+c)(c+1)$
prove that for all $\delta\in \setS$,
 $s,\tilde{s},t,\tilde{t}\in[0,T]$, $x,\tilde{x}\in\R^d$ with 
$V(x)\leq V(\tilde{x})$ it holds that
\begin{align}
&
\left\lVert \bigl(
X_{s,\max\{s,t\}}^{\funcPi,x}
-X_{s,\max\{s,t\} }^{\delta,x} 
\bigr)
 -
\bigl(
X_{\tilde{s},\max\{\tilde{s},\tilde{t}\}}^{\funcPi,\tilde{x}}
-X_{\tilde{s},\max\{\tilde{s},\tilde{t}\}}^{\delta,\tilde{x}} \bigr)\right\rVert_{{\frac{p}{2}}} \nonumber\\
&
\leq \left\lVert \bigl(X_{s,\max\{s,t\}}^{\funcPi,x}
-X_{s,\max\{s,t\} }^{\delta,x}
\bigr)
-
\bigl(
X_{s,\max\{s,\tilde{s},t,\tilde{t}\} }^{\funcPi,x}-
X_{s,\max\{s,\tilde{s},t,\tilde{t}\} }^{\delta,x}
\bigr)\right\rVert_{{\frac{p}{2}}} \nonumber\\
&\quad +\left\lVert\bigl(
X_{s,\max\{s,\tilde{s},t,\tilde{t}\} }^{\funcPi,x}
-X_{s,\max\{s,\tilde{s},t,\tilde{t}\} }^{\delta,x}
\bigr)
-\bigl(
 X_{\tilde{s},\max\{s,\tilde{s},t,\tilde{t}\} }^{\funcPi,x}
-X_{\tilde{s},\max\{s,\tilde{s},t,\tilde{t}\} }^{\delta,x}
\bigr)\right\rVert_{{\frac{p}{2}}} \nonumber\\
&\quad +\left\lVert\bigl(
X_{\tilde{s},\max\{s,\tilde{s},t,\tilde{t}\} }^{\funcPi,x}
-X_{\tilde{s},\max\{s,\tilde{s},t,\tilde{t}\} }^{\delta,x}
\bigr)
-\bigl( X_{\tilde{s},\max\{\tilde{s},\tilde{t}\}}^{\funcPi,x}
-X_{\tilde{s},\max\{\tilde{s},\tilde{t}\}}^{\delta,x} \bigr)\right\rVert_{{\frac{p}{2}}} \nonumber\\
&\quad +\left\lVert\bigl(
X_{\tilde{s},\max\{\tilde{s},\tilde{t}\}}^{\funcPi,x} 
-X_{\tilde{s},\max\{\tilde{s},\tilde{t}\}}^{\delta,x}
\bigr)-
\bigl(X_{\tilde{s},\max\{\tilde{s},\tilde{t}\}}^{\funcPi,\tilde{x}}
-X_{\tilde{s},\max\{\tilde{s},\tilde{t}\}}^{\delta,\tilde{x}} \bigr)\right\rVert_{{\frac{p}{2}}} \nonumber\\
&\leq \xeqref{m35}
\left[\sqrt{2} c^2+c\right]
\left[\sqrt{T}+p\right]^4
e^{ c^2\left[\sqrt{T}+{p}\right]^2 T}
(e^{1.5\barC T}V(x))^{\nicefrac{1}{p} } \size{\delta}^{\nicefrac{1}{2}}
\left\lvert\max\{s,t\}-\max\{s,\tilde{s},t,\tilde{t}\} \right \rvert^{\nicefrac{1}{2}} \nonumber\\
&\quad + \xeqref{m10}
25 (b+c)(c+1)    \left[\sqrt{T}+p\right]^6 e^{5 c^2\left[\sqrt{T}+p\right]^2 T}
e^{ 4.5 \barC T/p}(V(x))^{\nicefrac{2}{p}}\size{\delta}^{\nicefrac{1}{2}}
\lvert s-\tilde{s}\rvert^{\nicefrac{1}{2}}
 \nonumber\\
&\quad +
\xeqref{m35}\left[\sqrt{2} c^2+c\right]
\left[\sqrt{T}+p\right]^4
e^{ c^2\left[\sqrt{T}+{p}\right]^2 T}
(e^{1.5\barC T}V(x))^{\nicefrac{1}{p} } \size{\delta}^{\nicefrac{1}{2}}
\left\lvert\max\{s,\tilde{s},t,\tilde{t}\}-\max\{\tilde{s},\tilde{t}\}\right\rvert^{\nicefrac{1}{2}}
\nonumber\\
&\quad + \xeqref{m21}
2\sqrt{2} (c^2+bc+b) 
\left[\sqrt{T}+p\right]^5e^{3 c^2\left[\sqrt{T}+p\right]^2 T}
e^{1.5 \barC T/p}
\frac{(V(x))^{\nicefrac{1}{p}}
+(V(\tilde{x}))^{\nicefrac{1}{p} }}{2}
\size{\delta}^{\nicefrac{1}{2}}\lVert x-\tilde{x} \rVert\nonumber\\
&\leq 31 (b+c)(c+1)
\left[\sqrt{T}+p\right]^6
e^{5 c^2\left[\sqrt{T}+p\right]^2 T}\nonumber\\
&\qquad\qquad
 e^{4.5\barC T/p}\frac{(V(x))^{\nicefrac{2}{p}}
+(V(\tilde{x}))^{\nicefrac{2}{p} }}{2}
\left[
\lvert s-\tilde{s}\rvert^{\nicefrac{1}{2}}+
\lvert t-\tilde{t}\rvert^{\nicefrac{1}{2}}+\lVert x-\tilde{x}\rVert\right]
\size{\delta}^{\nicefrac{1}{2}}.
\end{align}
This and symmetry
imply \eqref{m32}.
The proof of \cref{s01b} is thus completed.
\end{proof}
\begin{lemma}\label{f01}Let $(V,\lVert \cdot \rVert_V)$, 
$(W,\lVert \cdot \rVert_W)$ be finite-dimensional normed $\R$-vector spaces with $V\neq \{0\}$ and let 
$f\in C^2(V,W)$.
Then
it holds for all $v_1,v_2,w_1,w_2\in V $
 that
\begin{align}
&\lVert (f(v_1)-f(w_1)) - (f(v_2) -f(w_2))\rVert_W\leq \left[\sup_{v\in V,h\in V\setminus\{0\}}
\frac{\left\lVert (\totalD f(v))(h)\right\rVert_W}{\lVert h\rVert_V}\right]\lVert(v_1-w_1) -(v_2-w_2)\rVert_V\nonumber\\
&\quad +\frac{1}{2}\left[\sup_{v\in V, h,k\in \setminus\{0\}}\frac{\left\lVert (\totalD^2 f (v))(h,k)\right\rVert_W}{\lVert h \rVert_V\lVert k\rVert_V}\right]\Bigl[\lVert v_1-v_2\rVert_V+\lVert w_1-w_2\rVert_V\Bigr]\lVert w_1-w_2\rVert_V.
\nonumber\end{align}
\end{lemma}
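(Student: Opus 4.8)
The plan is to turn the four–point expression into a single integral of $\totalD f$ along the two affine segments joining $w_i$ to $v_i$, to split off the mismatch of the first derivatives so that the remaining genuinely two–point contribution involves only $\totalD^2 f$, and then to estimate that contribution by the mean value inequality applied to $\totalD f$. The proof is elementary and uses none of the earlier results of the article. Throughout write $M_1:=\sup_{v\in V,\,h\in V\setminus\{0\}}\lVert(\totalD f(v))(h)\rVert_W/\lVert h\rVert_V$ and $M_2:=\sup_{v\in V,\,h,k\in V\setminus\{0\}}\lVert(\totalD^2 f(v))(h,k)\rVert_W/(\lVert h\rVert_V\lVert k\rVert_V)$; we may assume $M_1,M_2<\infty$, since otherwise the corresponding summand on the right–hand side is $+\infty$ or has vanishing coefficient.

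First I would set $\gamma_i(r):=w_i+r(v_i-w_i)$ for $i\in\{1,2\}$ and $r\in[0,1]$. Then $f\circ\gamma_i\in C^2([0,1],W)$ with $(f\circ\gamma_i)'(r)=(\totalD f(\gamma_i(r)))(v_i-w_i)$, so the fundamental theorem of calculus gives $f(v_i)-f(w_i)=\int_0^1(\totalD f(\gamma_i(r)))(v_i-w_i)\,dr$ and hence
\[
(f(v_1)-f(w_1))-(f(v_2)-f(w_2))=\int_0^1\Bigl[(\totalD f(\gamma_1(r)))(v_1-w_1)-(\totalD f(\gamma_2(r)))(v_2-w_2)\Bigr]\,dr .
\]
Adding and subtracting an appropriate term inside the integrand I would rewrite it as $(\totalD f(\gamma_1(r)))\bigl((v_1-w_1)-(v_2-w_2)\bigr)$, whose $W$–norm is at most $M_1\lVert(v_1-w_1)-(v_2-w_2)\rVert_V$ uniformly in $r$, plus a remainder of the form $\bigl[(\totalD f(\gamma_1(r)))-(\totalD f(\gamma_2(r)))\bigr](w_1-w_2)$.

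For the remainder I would apply the fundamental theorem of calculus a second time, now to $s\mapsto\totalD f\bigl(\gamma_2(r)+s(\gamma_1(r)-\gamma_2(r))\bigr)$ on $[0,1]$, obtaining $(\totalD f(\gamma_1(r)))-(\totalD f(\gamma_2(r)))=\int_0^1\bigl(\totalD^2 f(\gamma_2(r)+s(\gamma_1(r)-\gamma_2(r)))\bigr)(\gamma_1(r)-\gamma_2(r),\,\cdot\,)\,ds$, and therefore the remainder has $W$–norm at most $M_2\lVert\gamma_1(r)-\gamma_2(r)\rVert_V\lVert w_1-w_2\rVert_V$. The decisive elementary computation is the identity
\[
\gamma_1(r)-\gamma_2(r)=(w_1-w_2)+r\bigl((v_1-w_1)-(v_2-w_2)\bigr)=(1-r)(w_1-w_2)+r(v_1-v_2),
\]
exhibiting $\gamma_1(r)-\gamma_2(r)$ as a convex combination of $w_1-w_2$ and $v_1-v_2$; hence $\lVert\gamma_1(r)-\gamma_2(r)\rVert_V\le(1-r)\lVert w_1-w_2\rVert_V+r\lVert v_1-v_2\rVert_V$ and $\int_0^1\lVert\gamma_1(r)-\gamma_2(r)\rVert_V\,dr\le\tfrac12\bigl(\lVert v_1-v_2\rVert_V+\lVert w_1-w_2\rVert_V\bigr)$. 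Adding the bounds for the two pieces and integrating over $r\in[0,1]$ — the interchange of norm and integral being legitimate since $f\in C^2$ and $[0,1]^2$ is compact — yields
\[
\lVert(f(v_1)-f(w_1))-(f(v_2)-f(w_2))\rVert_W\le M_1\lVert(v_1-w_1)-(v_2-w_2)\rVert_V+\tfrac12 M_2\bigl(\lVert v_1-v_2\rVert_V+\lVert w_1-w_2\rVert_V\bigr)\lVert w_1-w_2\rVert_V ,
\]
which is the assertion.

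I expect the only genuinely delicate point to be the bookkeeping of the two–fold application of the mean value inequality: one must apply it to $\totalD f$ along the segment from $\gamma_2(r)$ to $\gamma_1(r)$, so that the ``position difference'' is exactly the convex combination displayed above — this is what produces the averaged bracket $\tfrac12(\lVert v_1-v_2\rVert_V+\lVert w_1-w_2\rVert_V)$ with the sharp constant $\tfrac12$ — while keeping the vector of norm $\lVert w_1-w_2\rVert_V$ in the second slot of $\totalD^2 f$, rather than symmetrising around the midpoint $\tfrac12(\gamma_1(r)+\gamma_2(r))$, which would only deliver the weaker constant $\tfrac14$ together with a product of two sums. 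Everything else is routine.
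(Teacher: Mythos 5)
Your overall strategy (a double application of the fundamental theorem of calculus, first along the chords joining the four points and then along the segment between the two moving base points) is the same as the paper's, but there is a concrete algebraic error in your decomposition, and it matters. With your parametrization $\gamma_i(r)=w_i+r(v_i-w_i)$ the integrand is $(\totalD f(\gamma_1(r)))(v_1-w_1)-(\totalD f(\gamma_2(r)))(v_2-w_2)$, and after extracting $(\totalD f(\gamma_1(r)))\bigl((v_1-w_1)-(v_2-w_2)\bigr)$ the remainder is $\bigl[\totalD f(\gamma_1(r))-\totalD f(\gamma_2(r))\bigr](v_2-w_2)$: the vector sitting in the derivative's argument slot can only be one of the chord directions $v_i-w_i$, never $w_1-w_2$. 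Carrying your argument through correctly therefore yields the bound $M_1\lVert(v_1-w_1)-(v_2-w_2)\rVert_V+\tfrac12 M_2\bigl(\lVert v_1-v_2\rVert_V+\lVert w_1-w_2\rVert_V\bigr)\lVert v_2-w_2\rVert_V$, with $\lVert v_2-w_2\rVert_V$ (not $\lVert w_1-w_2\rVert_V$) as the final factor. The hybrid you actually wrote down, namely the averaged bracket $\tfrac12(\lVert v_1-v_2\rVert_V+\lVert w_1-w_2\rVert_V)$ combined with the outer factor $\lVert w_1-w_2\rVert_V$, is false in general: take $V=W=\R$, $f$ with $f''\equiv M_2$ on a sufficiently large interval containing the four points (extended so that $f'$ stays bounded), $v_1=a$, $v_2=0$, $w_1=a-h$, $w_2=-h$ with $h>a>0$; then the left-hand side equals $M_2ah$, the first term on the right vanishes, and your second term equals $M_2a^2<M_2ah$.

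The paper avoids this by pairing the points the other way: it rewrites the left-hand side as $(f(v_1)-f(v_2))-(f(w_1)-f(w_2))$ and integrates along $\lambda v_1+(1-\lambda)v_2$ and $\lambda w_1+(1-\lambda)w_2$, so that the remainder carries $(w_1-w_2)$ in the derivative slot while the base-point difference becomes $\lambda(v_1-w_1)+(1-\lambda)(v_2-w_2)$. This produces the bracket $\tfrac12(\lVert v_1-w_1\rVert_V+\lVert v_2-w_2\rVert_V)$ together with the outer factor $\lVert w_1-w_2\rVert_V$, which is what the displayed computation \eqref{c06c} ends with and what is needed downstream in \eqref{v03} and \eqref{a10}; the bracket in the printed statement of the lemma should be read accordingly. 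In short: either switch to the paper's pairing, or keep yours and accept $\lVert v_2-w_2\rVert_V$ as the final factor; the two choices cannot be mixed.
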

\begin{proof}[Proof of \cref{f01}]  
The 
 fundamental theorem of calculus and the triangle inequality
 show for all
$v_1,v_2,w_1,w_2\in V $ that
\begin{align}\label{c06c}
&\left\lVert(f(v_1)-f(w_1)) - (f(v_2) -f(w_2))\right\rVert_W=
\left\lVert(f(v_1)-f(v_2)) - (f(w_1) -f(w_2))\right\rVert_W
   \nonumber\\
&
= \left\lVert \int_{0}^{1} ( \totalD f(\lambda v_1+(1-\lambda)v_2 ))(v_1-v_2) -
 \totalD f(\lambda w_1+(1-\lambda)w_2 )(w_1-w_2)\,d\lambda\right\rVert_W   \nonumber\\
&= \biggl\lVert\int_{0}^{1} \bigl[\totalD f(\lambda v_1+(1-\lambda)v_2 )\bigr]((v_1-v_2)-(w_1-w_2) )\,d\lambda  \nonumber\\
&\qquad +\int_{0}^{1}\bigl[
\totalD f(\lambda v_1+(1-\lambda)v_2 )-
\totalD f(\lambda w_1+(1-\lambda)w_2 )\bigr](w_1-w_2)\,d\lambda \biggr\rVert_W   \nonumber\\
&= \biggl\lVert\int_{0}^{1} \bigl[\totalD f(\lambda v_1+(1-\lambda)v_2 )\bigr]((v_1-v_2)-(w_1-w_2) )\,d\lambda  \nonumber\\
&\qquad +\int_{0}^{1}\int_{0}^{1}\left[
\totalD^2f\Bigl( \mu (\lambda v_1+(1-\lambda)v_2)+(1-\mu)(\lambda w_1+(1-\lambda)w_2 ) \Bigr)
\right]\nonumber\\
&\qquad\qquad\qquad\qquad\qquad\qquad\qquad\Bigl(\lambda (v_1-w_1)+(1-\lambda)(v_2-w_2),w_1-w_2\Bigr)\,d\mu \,d\lambda \biggr\rVert_W \nonumber\\
&\leq \left[\sup_{v\in V,h\in V\setminus\{0\}}
\frac{\left\lVert (\totalD f(v))(h)\right\rVert_W}{\lVert h\rVert_V}\right]\lVert (v_1-w_1) -(v_2-w_2)\rVert_V  \nonumber\\
&\quad +\left[\sup_{v\in V, h,k\in \setminus\{0\}}\frac{\left\lVert (\totalD^2 f (v))(h,k)\right\rVert_W}{\lVert h \rVert_V\lVert k\rVert_V}\right]\!\left[\int_{0}^{1}
\lambda\lVert v_1-w_1\rVert_V+(1-\lambda)\lVert v_2-w_2\rVert_V\,d\lambda\right]\!\lVert w_1-w_2\rVert_V.
\end{align}%
This and the fact that $\int_{0}^{1}\lambda\,d\lambda=
\int_{0}^{1}(1-\lambda)\,d\lambda=1/2
$ complete the proof of \cref{f01}.
\end{proof}
\begin{corollary}\label{c10}
Let $\lVert \cdot\rVert\colon \bigcup_{m,n\in\N}\R^{m\times n}\to[0,\infty)$ satisfy for all $m,n\in\N$, $s=(s_{ij})_{i\in[1,m]\cap\N,j\in [1,n]\cap\N}\in\R^{m\times n}$ that
$\lVert s\rVert^2=\sum_{i=1}^{m}\sum_{j=1}^{n}\lvert s_{ij}\rvert^2$,
let $d\in\N$, 
$T,p\in (0,\infty)$, 
let 
$\mu\in C^2(\R^d,\R^d) $, $ \sigma  \in C^2(\R^d,\R^{d\times d})$ 
have bounded first and second order derivatives,
let $(\Omega,\mathcal{F},\P, (\F_t)_{t\in[0,T]})$ be a filtered probability space which satisfies the usual conditions,
let $W=(W_t)_{t\in[0,T]}\colon [0,T]\times\Omega\to\R^{d}$ be a standard
$(\F_t)_{t\in[0,T]}$-Brownian motion with continuous sample paths, and
for every $n\in \N$, $x\in\R^d$ let $Y^{n,x}=(Y^{n,x}_t)_{t\in [0,T]} \colon [0,T]\times\Omega\to\R^d$ satisfy for all
$k\in\{0,1,\ldots,n-1\}$,
 $t\in (\frac{kT}{n},\frac{(k+1)T}{n}]$
that
 $Y^{n,x}_0=x$ and
$Y^{n,x}_t=  Y^{n,x}_{\frac{kT}{n}}+ \mu( Y^{n,x}_{\frac{kT}{n}} ) (t-\frac{kT}{n}) 
+\sigma( Y^{n,x}_{\frac{kT}{n}} ) (W_{t}-W_{\frac{kT}{n}}) 
$. 
Then
\begin{enumerate}[i)]
\item \label{c10a}for every $x\in\R^d$ there exists a unique adapted stochastic process with continuous sample paths
 $X^{x}=(X^{x}_t)_{t\in[0,T]} \colon [0,T]\times\Omega\to\R^d $  
  such that
for all $t\in[0,T]$ it holds a.s.\ that $X_t^{x}=x+\int_{0}^{t}\mu(X_s^{x})\,ds+\int_{0}^{t}\sigma(X_s^{x})\,dW_s$ and
\item \label{c10b}
there exists $C\in (0,\infty)$ such that for all
$n\in\N$,
$x\in\R^d$, $y\in\R^d\setminus\{x\}$, $t\in[0,T]$ it holds that
\begin{align}\begin{split}
&\frac{\left(\E\!\left[\left\lVert X^{x}_t-X^{y}_t\right\rVert^{p}\right]\right)^{\nicefrac{1}{p}}}{\lVert x-y\rVert}+
\frac{\left(
\E\!\left[
\left\lVert Y^{n,x}_{t}-Y^{n,y}_{t} \right\rVert^{p}\right]\right)^{\nicefrac{1}{p}}}{\lVert x-y\rVert}\\
&+
\frac{\sqrt{n} \left(\E \!\left[\bigl\lVert X_{t}^x- Y_{t}^{n,x}\bigr\rVert^p\right]\right)^{1/p}}{1+\lVert x\rVert}
+
\frac
{\sqrt{n} \left(\E \!\left[\bigl\lVert 
\bigl( X_{t}^x- Y_{t}^{n,x}\bigr)
-\bigl( X_{t}^y- Y_{t}^{n,y}\bigr)
\bigr\rVert^p\right]\right)^{1/p}}{\lVert x-y\rVert\bigl(1+\lVert x\rVert+\lVert y\rVert\bigr)}\leq C.
\end{split}\end{align}

\end{enumerate}
\end{corollary}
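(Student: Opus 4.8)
The plan is to deduce \cref{c10} from \cref{s01b}: \cref{s28} supplies the Lyapunov function $V$, \cref{f01} supplies hypothesis \eqref{v03}, and monotonicity of $L^{q}(\Omega)$-norms (Jensen's inequality, since $\P(\Omega)=1$) bridges the gap between the exponents demanded by \cref{s01b} and the given $p\in(0,\infty)$. For item~i): since $\mu$ and $\sigma$ have bounded first derivatives they are globally Lipschitz, so I would invoke the classical existence-and-uniqueness theorem for SDEs with Lipschitz coefficients. By the remark following \cref{s01b} this identifies $X^{x}$ with $X^{\funcPi,x}_{0,\cdot}$, and, letting $\delta_n\in\setS$ be the equidistant partition of $[0,T]$ into $n$ pieces (so $\size{\delta_n}=T/n$ and $\delta_n(0)=0$), the recursion displayed after \cref{s01b} identifies $Y^{n,x}$ with $X^{\delta_n,x}_{0,\cdot}$; moreover all processes occurring in the hypotheses of \cref{s01b} exist (SDE solutions for $\delta=\funcPi$, finite Euler recursions with linear interpolation for $\delta\in\setS$), so \cref{s01b} is applicable.

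Next I would set up the parameters. I would put $P:=\max\{2p,4\}$, so that $P\geq 4$, $p\leq P/2$, and $P/2\geq 2\geq 3/2$; let $c\in(0,\infty)$ be a common Lipschitz constant of $\mu$ and $\sigma$ and $b\in(0,\infty)$ a common bound for $\lVert\totalD^2\mu\rVert$ and $\lVert\totalD^2\sigma\rVert$; and define $V(x):=\bigl(a+(2c)^2\lVert x\rVert^2\bigr)^{P/2}$ with $a:=\max\{1,\,2(\lVert\mu(0)\rVert+\lVert\sigma(0)\rVert)^2\}$. Then $V\in C^2(\R^d,[1,\infty))$, and $(\alpha+\beta)^2\leq 2\alpha^2+2\beta^2$ gives $(V(x))^{1/P}=\sqrt{a+(2c)^2\lVert x\rVert^2}\geq c\lVert x\rVert+\lVert\mu(0)\rVert+\lVert\sigma(0)\rVert$ as well as $(V(x))^{1/P}\leq(\sqrt a+2c)(1+\lVert x\rVert)$. \cref{s28}, applied with its exponent equal to $P/2$ and its constant equal to $2c$, then yields the two bounds in \eqref{m42} with $\barC:=\max\{2cP,\,(2c)^2P(P-1)\}$, and \cref{f01}, applied to $\zeta\in\{\mu,\sigma\}$ with a suitable ordering of $(v_1,w_1,v_2,w_2)$ among $(x,y,\tilde x,\tilde y)$, yields \eqref{v03} with these $c$ and $b$. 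Thus \cref{s01b} applies with $m=d$ and exponent $P$.

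Finally I would read off the four summands of item~ii). Specialising \eqref{m28} to $s=\tilde s=0$ and $t=\tilde t$, once for $\delta=\funcPi$ and once for $\delta=\delta_n$, bounds both $\lVert X^{x}_t-X^{y}_t\rVert_P$ and $\lVert Y^{n,x}_t-Y^{n,y}_t\rVert_P$ by $\sqrt2\,e^{c^2[\sqrt T+P]^2T}\lVert x-y\rVert$. Estimate \eqref{m29} with $\delta=\delta_n$, together with $\lvert t\rvert^{1/2}\leq\sqrt T$, $\size{\delta_n}^{1/2}=\sqrt{T/n}$, and the growth bound on $(V(x))^{1/P}$, gives $\sqrt n\,\lVert X^{x}_t-Y^{n,x}_t\rVert_P\leq C_1(1+\lVert x\rVert)$. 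Estimate \eqref{m33} with $\delta=\delta_n$ and the substitution $(x,y,\tilde x,\tilde y)\gets(x,x,y,y)$ kills its first and third summands and, after the same elementary bounds and $\tfrac{(V(x))^{1/P}+(V(y))^{1/P}}{2}\leq(\sqrt a+2c)(1+\lVert x\rVert+\lVert y\rVert)$, gives $\sqrt n\,\bigl\lVert(X^{x}_t-Y^{n,x}_t)-(X^{y}_t-Y^{n,y}_t)\bigr\rVert_{P/2}\leq C_2\lVert x-y\rVert(1+\lVert x\rVert+\lVert y\rVert)$. Since $p\leq P/2\leq P$ and $\P(\Omega)=1$ we have $\lVert\cdot\rVert_p\leq\lVert\cdot\rVert_{P/2}\leq\lVert\cdot\rVert_P$, so dividing by the respective denominators and summing the four displays produces item~ii) with $C:=2\sqrt2\,e^{c^2[\sqrt T+P]^2T}+C_1+C_2$.

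The substance of the argument lives in \cref{s01b}, so the computations above are largely bookkeeping; the one delicate point, and the step I would watch most carefully, is the exponent juggling: \eqref{m33} is available only for exponent $\geq 4$ and is phrased in $\lVert\cdot\rVert_{p/2}$, which forces the inflation of $p$ to $P=\max\{2p,4\}$, while $P/2$ must simultaneously remain $\geq 3/2$ so that \cref{s28} still controls the chosen polynomial Lyapunov function, and only at the very end may one deflate back to $\lVert\cdot\rVert_p$ via monotonicity of $L^{q}$-norms on the probability space.
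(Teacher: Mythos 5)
Your proposal is correct and mirrors the paper's proof: both reduce to \cref{s01b} by constructing a polynomial Lyapunov function $V$ of the form $(a+(2c)^2\lVert x\rVert^2)^{P/2}$ via \cref{s28}, deriving condition \eqref{v03} from \cref{f01}, identifying $Y^{n,x}$ with the Euler scheme $X^{\delta_n,x}_{0,\cdot}$ on the equidistant $\delta_n\in\setS$ with $\size{\delta_n}=T/n$, and reading off the four summands from \eqref{m28}, \eqref{m29}, and \eqref{m33} (the latter with $(x,y,\tilde x,\tilde y)\gets(x,x,y,y)$). Your explicit inflation of the exponent to $P=\max\{2p,4\}$, ensuring simultaneously $P\geq 4$ for \eqref{m33}, $p\leq P/2$ so the four-point bound at $\lVert\cdot\rVert_{P/2}$ dominates the wanted $\lVert\cdot\rVert_p$, and $P/2\geq 3/2$ for \cref{s28}, makes precise the bookkeeping that the paper compresses into ``by Jensen's inequality we can assume $p\geq 4$.''
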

\begin{proof} [Proof of \cref{c10}]By Jensen's inequality we can assume $p\geq 4$.
Next, the fact that
$\mu\in C^2(\R^d,\R^d) $, $ \sigma  \in C^2(\R^d,\R^{d\times d})$ 
have bounded first and second order derivatives and \cref{f01} show that there exist $b,c\in (0,\infty)$ such that for all $x,y,\tilde{x},\tilde{y}\in\R^d$ it holds that
\begin{equation}
\begin{split}
&
\max _{\zeta\in\{\mu,\sigma\}}
\left\lVert 
(\zeta(x)-\zeta (y))-
(\zeta(\tilde{x})-\zeta (\tilde{y}))\right\rVert 
\leq c
\left\lVert 
(x-y)-(\tilde{x}-\tilde{y})\right\rVert 
+b
\tfrac{\left(\left\lVert x-y\right\rVert 
+\left\lVert \tilde{x}-\tilde{y}\right\rVert 
\right)}{2}\lVert x-\tilde{x}\rVert.
\end{split}\label{a10}
\end{equation}
Throughout the rest of this proof let $V\colon\R^d\to [1,\infty)$ satisfy for all $x\in\R^d$ that
\begin{align}
V(x)= 2^p\Bigl(1+\bigl(\lVert \mu(0) \rVert+ \lVert\sigma(0)\rVert\bigr)^2+ c^2\lVert x \rVert^2\Bigr)^{p/2}
,
\end{align}
 let
$\delta_n\colon [0,T]\to[0,T]$, $n\in\N$, satisfy for all $n\in\N$ that 
$\delta_n([0,\frac{T}{n}])=\{0\}, 
\delta_n((\frac{T}{n},\frac{2T}{n}])=\{\frac{T}{n}\},
\ldots, \delta_n((\frac{(n-1)T}{n},\frac{nT}{n}])=\{\frac{(n-1)T}{n}\}
$,  and 
for every $n\in\N$,
 $s\in [0,T]$, $x\in\R^d$ 
 let $(\mathbb{Y}^{\delta_n,x}_{s,t})_{t\in[s,T]} \colon[s,T]  \times\Omega\to\R^d $ satisfy for all $t\in[s,T]$ that 
$\mathbb{Y}^{\delta_n,x}_{s,s}=x$ and
$
\mathbb{Y}^{\delta_n,x}_{s,t}=
 \mathbb{Y}_{s,\max\{s,\rdown{t}{\delta_n}\}}^{\delta_n,x}+\mu( \mathbb{Y}_{s,\max\{s,\rdown{t}{\delta_n}\}}^{\delta_n,x})(
t-
\max\{s,\rdown{t}{\delta_n}\}
)
+\sigma( \mathbb{Y}_{s,\max\{s,\rdown{t}{\delta_n}\}}^{\delta_n,x})(W_{t}-
W_{\max\{s,\rdown{t}{\delta_n}\}}).
$
Then 
\begin{enumerate}[(A)]\itemsep0pt
\item   for all $x\in\R^d$ it holds that
\begin{align}\label{a11}
\lVert \mu(0) \rVert+ \lVert\sigma(0)\rVert+c\lVert x\rVert\leq 
2 \Bigl(1+\bigl(\lVert \mu(0) \rVert+ \lVert\sigma(0)\rVert\bigr)^2+ c^2\lVert x \rVert^2\Bigr)^{1/2}= (V(x))^{1/p},
\end{align}
\item 
for all $n\in\N$,
 $s\in [0,T]$, $x\in\R^d$ it holds that $(\mathbb{Y}^{\delta_n,x}_{s,t})_{t\in[s,T]}$ has continuous sample paths, and
 \item 
for all $n\in\N$,
 $s\in [0,T]$, $x\in\R^d$, $t\in[s,T]$ 
it holds 
 a.s.\ that
\begin{equation}
\label{y01b}
\mathbb{Y}^{\delta_n,x}_{0,t}=Y_{t}^{n,x}\quad\text{and}\quad \mathbb{Y}^{\delta_n,x}_{s,t}=
x+\int_{s}^{t}\mu( \mathbb{Y}_{s,\max\{s,\rdown{r}{\delta_n}\}}^{\delta_n,x})\,d{r}
+\int_{s}^{t}\sigma( \mathbb{Y}_{s,\max\{s,\rdown{r}{\delta_n}\}}^{\delta_n,x})\,dW_{r}.
\end{equation}

\end{enumerate}
Next, a standard result on stochastic differential equations with Lipschitz continuous coefficients
(see, e.g., \cite[Theorem~V.13.1 and Lemma~V.13.6]{RogersWilliams2000b}) and the fact that
$\mu, \sigma  $ are Lipschitz continuous show that 
for every 
 $s\in [0,T]$, $x\in\R^d$ there exists a unique adapted stochastic process with continuous sample paths $(\mathbb{X}^{x}_{s,t})_{t\in[s,T]} \colon[s,T]  \times\Omega\to\R^d $   such that
for all $t\in[s,T]$ it holds a.s.\ that
\begin{equation}
\label{y01c}
 \mathbb{X}^{x}_{s,t}=
x+\int_{s}^{t}\mu( \mathbb{X}_{s,t}^{x})\,d{r}
+\int_{s}^{t}\sigma( \mathbb{X}_{s,t}^{x})\,dW_{r}
.
\end{equation}
For every $x\in\R^d$ let   $X^{x}=(X^{x}_t)_{t\in[0,T]} \colon [0,T]\times\Omega\to\R^d $   satisfy 
   that $X^x=(\mathbb{X}_{0,t}^x)_{t\in[0,T]}$. 
This and \eqref{y01c} prove
 \eqref{c10a}.
 
 Moreover, the fact that 
$p\geq 3$
 and 
\cref{s28} (applied with
$p\gets p/2$, $a\gets 4 \bigl[1+\bigl(\lVert \mu(0) \rVert+ \lVert\sigma(0)\rVert\bigr)^2\bigr]$, $c\gets 2c$
in the notation of \cref{s28})
show for all $x,y\in\R^d$ that
$
\bigl\lvert((\totalD V)(x))(y)\bigr\rvert
\leq {2p}{c} (V(x))^{\frac{p-1}{p}}\lVert y\rVert$ and $
\bigl\lvert ((\totalD^2 V)(x))(y,y)\bigr\rvert
\leq 4p^2c^2(V (x))^{\frac{p-2}{p}}\lVert y\rVert^2.
$
This, the fact that $p\geq 4$, \eqref{a10}--\eqref{y01c}, 
and
\cref{s01b} (applied for every $n\in\N $ with $\barC\gets \max\{2pc,4p^2c^2\} $,
$\delta\gets \delta_n$, 
$(X^{\funcPi,x}_{s,t} )_{s\in[0,T],t\in[s,T],x\in\R^d}\gets(\mathbb{X}_{s,t}^x)_{s\in[0,T],t\in[s,T],x\in\R^d}$,
$(X^{\delta,x}_{s,t} )_{s\in[0,T],t\in[s,T],x\in\R^d}\gets (\mathbb{Y}_{s,t}^{\delta_n,x})_{s\in[0,T],t\in[s,T],x\in\R^d}$
in the notation of \cref{s01b})
complete the proof of \cref{c10}.
\end{proof}

\begin{corollary}\label{c01}
Let $d\in\N$, $T,p\in (0,\infty)$, 
let $\lVert\cdot\rVert\colon \R^d\to [0,\infty)$ be a norm,
let
$f\in C^2(\R^d,\R)$,
$\mu\in C^2(\R^d,\R^d) $, 
$\sigma\in C^2(\R^d,\R^{d\times d}) $ have bounded first and second order derivatives,
let $(\Omega,\mathcal{F},\P, (\F_t)_{t\in[0,T]})$ be a filtered probability space which satisfies the usual conditions,
 let $W^i\colon [0,T]\times\Omega\to\R^d $, $i\in\N$, be independent standard 
$(\F_t)_{t\in[0,T]}$-Brownian motions with continuous sample paths,
for every $n,i\in \N$, $x\in\R^d$ let $X^{n,i,x}=(X^{n,i,x}_t)_{t\in [0,T]} \colon [0,T]\times\Omega\to\R^d$ satisfy for all
$k\in\{0,1,\ldots,n-1\}$,
 $t\in ({kT}/{n},{(k+1)T}/{n}]$
that
 $X^{n,i,x}_0=x$ and
$X^{n,i,x}_t=  X^{n,i,x}_{{kT}/{n}}+ \mu( X^{n,i,x}_{{kT}/{n}} ) (t-\frac{kT}{n}) 
+\sigma( X^{n,i,x}_{{kT}/{n}} ) (W^i_{t}-W^i_{{kT}/{n}}) 
$, and for every $x\in\R^d$ let $X^{x}=(X^{x}_t)_{t\in[0,T]} \colon [0,T]\times\Omega\to\R^d $  
be an adapted stochastic process with continuous sample paths such that
for all $t\in[0,T]$ it holds a.s.\ that $X_t^{x}=x+\int_{0}^{t}\mu(X_s^{x})\,ds+\int_{0}^{t}\sigma(X_s^{x})\,dW_s^1$.
Then
\begin{align}\footnotesize\begin{split}
\sup_{\substack{x,y\in\R^d,n\in\N\colon\\ x\neq y}}
\frac{\sqrt{n}\left(\E\! 
\left[\left\lvert
\frac{1}{n}\sum\limits_{i=1}^{n} \left[\Bigl(f(X_T^{n,i,x})-\E [f(X_T^{x})]\Bigr)
-\Bigl(f(X_T^{n,i,y})-\E [f(X_T^{y})]\Bigr)\right]\right\rvert^p\right]\right)^{\nicefrac{1}{p}}}{\lVert x-y\rVert(1+\lVert x\rVert+\lVert y\rVert )}
<\infty.
\end{split}\end{align}
\end{corollary}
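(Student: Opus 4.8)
Here is the approach I would take. The inner Monte-Carlo increment splits into a centred fluctuation term and a deterministic ``weak error in the starting point'' term; the fluctuation term is handled by a moment inequality for i.i.d.\ sums, and the deterministic term by a first-order Taylor expansion arranged so that no cancellation is lost. As a first step I would invoke Jensen's inequality to reduce to $p\ge 2$, write $\lVert\cdot\rVert_p$ for the $L^p(\P)$-norm, and set $L_1:=\sup_{v\in\R^d,\,h\neq0}\lvert(\totalD f(v))(h)\rvert/\lVert h\rVert$ and $L_2:=\sup_{v\in\R^d,\,h,k\neq0}\lvert(\totalD^2 f(v))(h,k)\rvert/(\lVert h\rVert\lVert k\rVert)$, which are finite since $f$ has bounded first and second derivatives; in particular $\lvert f(a)-f(b)\rvert\le L_1\lVert a-b\rVert$, so all expectations in the statement are finite. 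Applying \cref{c10} (with $W\gets W^1$, $Y^{n,\cdot}\gets X^{n,1,\cdot}$, and $X^{\cdot}$ the given exact solution) produces a constant $C\in(0,\infty)$, independent of $n,x,y$, such that, writing $E_z:=X^z_T-X^{n,1,z}_T$, one has $\lVert X^x_T-X^y_T\rVert_p\le C\lVert x-y\rVert$, $\lVert X^{n,1,x}_T-X^{n,1,y}_T\rVert_p\le C\lVert x-y\rVert$, $\lVert E_z\rVert_p\le C(1+\lVert z\rVert)n^{-1/2}$, and $\lVert E_x-E_y\rVert_p\le C\lVert x-y\rVert(1+\lVert x\rVert+\lVert y\rVert)n^{-1/2}$; by equality in law of the $W^i$ these bounds, and the relevant expectations, are the same for every $i$.

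Next, setting $B_i:=f(X^{n,i,x}_T)-f(X^{n,i,y}_T)$ (i.i.d., with finite $p$-th moment by the above), I would use the identity
\[
\tfrac1n\smallsum_{i=1}^{n}\bigl[(f(X^{n,i,x}_T)-\E[f(X^x_T)])-(f(X^{n,i,y}_T)-\E[f(X^y_T)])\bigr]=\tfrac1n\smallsum_{i=1}^{n}\bigl(B_i-\E[B_i]\bigr)+D,
\]
where $D:=\E[f(X^{n,1,x}_T)-f(X^{n,1,y}_T)]-\bigl(\E[f(X^x_T)]-\E[f(X^y_T)]\bigr)$ is deterministic. Since the $B_i-\E[B_i]$ are i.i.d.\ and centred, a standard moment estimate (the Marcinkiewicz--Zygmund inequality, or the Burkholder--Davis--Gundy inequality applied to the discrete-time martingale $(\sum_{i\le k}(B_i-\E[B_i]))_k$) gives $\lVert\tfrac1n\sum_{i=1}^n(B_i-\E[B_i])\rVert_p\le \mathfrak{c}_p n^{-1/2}\lVert B_1\rVert_p\le \mathfrak{c}_p L_1 C\,n^{-1/2}\lVert x-y\rVert$, so $\sqrt n$ times the fluctuation part is bounded by a constant multiple of $\lVert x-y\rVert(1+\lVert x\rVert+\lVert y\rVert)$.

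It then remains to estimate $D=\E[g]$ with $g:=(f(X^{n,1,x}_T)-f(X^{n,1,y}_T))-(f(X^x_T)-f(X^y_T))$, and this is the delicate point: applying \cref{f01} to $f$ at the four points $X^{n,1,x}_T,X^x_T,X^{n,1,y}_T,X^y_T$ in the direct pairing is too crude, as it discards the cancellation in $D$ and leaves a term of size $\lVert x-y\rVert^2$ with no factor $n^{-1/2}$. Instead I would expand by the fundamental theorem of calculus along both segments, $f(a)-f(b)=\int_0^1(\totalD f(b+\theta(a-b)))(a-b)\,d\theta$, so that with $\Delta^n:=X^{n,1,x}_T-X^{n,1,y}_T$, $\Delta:=X^x_T-X^y_T$, $\alpha_\theta:=X^{n,1,y}_T+\theta\Delta^n$, $\beta_\theta:=X^y_T+\theta\Delta$ and the linearity of $\totalD f(\alpha_\theta)$,
\[
g=\int_0^1\Bigl((\totalD f(\alpha_\theta))(\Delta^n-\Delta)+\bigl((\totalD f(\alpha_\theta))-(\totalD f(\beta_\theta))\bigr)(\Delta)\Bigr)\,d\theta.
\]
The gain is that $\Delta^n-\Delta=E_y-E_x$ and $\alpha_\theta-\beta_\theta=(1-\theta)E_y+\theta E_x$, so that $\lvert g\rvert\le L_1\lVert E_x-E_y\rVert+\tfrac12 L_2\lVert\Delta\rVert(\lVert E_x\rVert+\lVert E_y\rVert)$; taking expectations, using the Cauchy--Schwarz inequality, the four bounds above, and $p\ge2$, I would obtain $\lvert D\rvert\le C(L_1+L_2C)\,n^{-1/2}\lVert x-y\rVert(1+\lVert x\rVert+\lVert y\rVert)$, so $\sqrt n\,\lvert D\rvert$ is again bounded by a constant times $\lVert x-y\rVert(1+\lVert x\rVert+\lVert y\rVert)$. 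Combining the two bounds, dividing by $\lVert x-y\rVert(1+\lVert x\rVert+\lVert y\rVert)$ and taking the supremum over $x\neq y$ and $n\in\N$ finishes the proof. I expect the estimate of $D$ to be the only genuine obstacle: one must see that the weak-error-in-the-starting-point $z\mapsto\E[f(X^{n,1,z}_T)-f(X^z_T)]$ is not merely $O(n^{-1/2})$ but Lipschitz in $z$ with modulus $O(n^{-1/2}(1+\lVert z\rVert))$, and the decisive trick is to keep $\Delta=X^x_T-X^y_T$ (whose $L^p$-norm is only $O(\lVert x-y\rVert)$) as a single factor while loading the entire ``strong error times growth'' onto the factor $\alpha_\theta-\beta_\theta$, rather than expanding $f$ in the straightforward pairing.
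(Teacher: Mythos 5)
Your proof is correct and runs along the same lines as the paper's. In both arguments the Monte-Carlo increment is split (the paper's identity \eqref{c02}) into a centred i.i.d.\ fluctuation, handled by Marcinkiewicz--Zygmund exactly as you do, and a deterministic ``bias'' $D$; and in both arguments the bias is controlled via the same four-point estimate with the key pairing that keeps $X^x_T-X^y_T$ (size $O(\lVert x-y\rVert)$) as a single factor and attaches the growth factor $(1+\lVert x\rVert+\lVert y\rVert)/\sqrt n$ to the Euler errors $E_x,E_y$. The only cosmetic difference is that the paper invokes the four-point estimate through \cref{f01} (specialised to \eqref{c03b} and then applied in \eqref{c04}), whereas you re-derive the same inequality by a fundamental-theorem-of-calculus expansion along the two segments.

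Your worry that ``applying \cref{f01} in the direct pairing leaves a term of size $\lVert x-y\rVert^2$ with no $n^{-1/2}$'' is not actually a defect of the pairing $(v_1,w_1,v_2,w_2)=(X^{n,1,x}_T,X^x_T,X^{n,1,y}_T,X^y_T)$, but an artefact of a transcription error in the statement of \cref{f01}: as written, the quadratic term reads $\bigl[\lVert v_1-v_2\rVert_V+\lVert w_1-w_2\rVert_V\bigr]\lVert w_1-w_2\rVert_V$, but the proof of \cref{f01} (the final displayed estimate, with $\int_0^1\lambda\lVert v_1-w_1\rVert_V+(1-\lambda)\lVert v_2-w_2\rVert_V\,d\lambda$) actually yields $\tfrac12\bigl[\lVert v_1-w_1\rVert_V+\lVert v_2-w_2\rVert_V\bigr]\lVert w_1-w_2\rVert_V$. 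With the corrected bracket the direct pairing does produce $\tfrac12 L_2\bigl[\lVert E_x\rVert+\lVert E_y\rVert\bigr]\lVert X^x_T-X^y_T\rVert$, which is precisely your estimate and precisely what the paper uses in \eqref{c04}. So there was no genuine obstruction to short-circuiting through \cref{f01}; your FTC derivation simply re-establishes its corrected form, and the rest of your argument (H\"older to split the product with $L^{2p}$ bounds from \cref{c10}, then Marcinkiewicz--Zygmund) is the paper's argument.
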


\begin{proof}[Proof of \cref{c01}]First, observe that by Jensen's inequality we can assume that 
$ p\geq 2$. Next, the assumptions on $f$ and \cref{f01} imply that there exists $c\in (0,\infty)$ such that for all
$x,y,\tilde{x},\tilde{y}\in\R^d$ it holds that
$\lvert f(x)-f(y)\rvert\leq c\lVert x-y\rVert$ and
\begin{align}\begin{split}
&
\lvert
(f(x)-f(y)) -(f(\tilde{x})-f(\tilde{y}))\rvert\leq c
\left[
 \lVert(x-y) -(\tilde{x}-\tilde{y})\rVert+ \tfrac{\lVert x-y\rVert+\lVert \tilde{x}-\tilde{y}\rVert}{2}\lVert x-\tilde{x}\rVert\right].
\end{split}\label{c03b}\end{align}
This, the triangle inequality, H\"older's inequality,  \cref{c10}, and the assumptions on $\mu,\sigma$ imply that
\begin{align}\begin{split}
&\sup_{\substack{x,y\in\R^d,n\in\N\colon x\neq y}}
\frac{\left(\E\!\left[\left\lvert f(X^{n,1,x}_{T})-f(X^{n,1,y}_{T}) \right\rvert^{p}\right]\right)^{\nicefrac{1}{p}}}{\lVert x-y\rVert }\\
&\leq 
\sup_{\substack{x,y\in\R^d,n\in\N\colon x\neq y}}
\frac{c\left(
\E\!\left[
\left\lVert X^{n,1,x}_{T}-X^{n,1,y}_{T} \right\rVert^{p}\right]\right)^{\nicefrac{1}{p}}}{\lVert x-y\rVert}<\infty
\end{split}\label{c03}
\end{align}
and
\begin{align}\begin{split}
&\sup_{\substack{x,y\in\R^d,n\in\N\colon\\ x\neq y}}\frac{\sqrt{n} \left(\E\!\left[\left\lvert \left(f(X_T^{n,1,x}) -f(X^{x}_T)\right)
-\left(f(X_T^{n,1,y}) -f(X^{x}_T)\right)\right\rvert^{p}\right]\right)^{\nicefrac{1}{p}}}{\lVert x-y\rVert(1+\lVert x\rVert+\lVert y\rVert )}
\\
&\leq c\sup_{\substack{x,y\in\R^d,n\in\N\colon\\ x\neq y}}\Biggl(\E \Biggl[\biggl\lvert   \frac{\sqrt{n}
\left\lVert \left(X_T^{n,1,x} -X^{x}_T\right)
-\left(X_T^{n,1,y} - X^{y}_T\right)\right\rVert
}{(1+\lVert x\rVert+\lVert y\rVert )\lVert x-y\rVert}\\
&\qquad\qquad\qquad\qquad\qquad
+\frac{0.5\sqrt{n}\Bigl[
\left\lVert X_T^{n,1,x} -X^{x}_T\right\rVert
+\left\lVert X_T^{n,1,y} -X^{y}_T\right\rVert\Bigr] \bigl\lVert X^{x}_T-X^{y}_T\bigr\rVert}{(1+\lVert x\rVert+\lVert y\rVert )\lVert x-y\rVert}\biggr\rvert^p\Biggr]\Biggr)^{\nicefrac{1}{p}}\\
\end{split}\nonumber\\
\begin{split}
&\leq c\sup_{\substack{x,y\in\R^d,n\in\N\colon\\ x\neq y}}  \frac{\sqrt{n}
\left(
\E\!\left[
\left\lVert \left(X_T^{n,1,x} -X^{x}_T\right)
-\left(X_T^{n,1,y} - X^{y}_T\right)\right\rVert^p\right]\right)^{\!\nicefrac{1}{p}}
}{(1+\lVert x\rVert+\lVert y\rVert )\lVert x-y\rVert}\\
& 
+c\left[\sup_{x\in\R^d,n\in\N} \frac{\sqrt{n}
\left(\E\!\left[
\left\lVert X_T^{n,1,x} -X^{x}_T\right\rVert^{2p}\right]\right)^{\frac{1}{2p}}
 }{1+\lVert x\rVert}\right]\!\!\!
\left[
\sup_{\substack{x,y\in\R^d\colon x\neq y}}
\frac{\left(\E\!\left[\left\lVert X^{x}_T-X^{y}_T\right\rVert^{2p}\right]\right)^{\frac{1}{2p}}}{\lVert x-y\rVert}\right]<\infty.
\label{c04}\end{split}\end{align}
Next, for all $x,y\in\R^d$, $n\in\N$ it holds that
\begin{align}\begin{split}
&
\frac{1}{n}\sum_{i=1}^{n} 
\Bigl(f(X_T^{n,i,x})-\E [f(X_T^{x})]\Bigr)
-\Bigl(f(X_T^{n,i,y})-\E [f(X_T^{y})]\Bigr)
\\
&
= 
\frac{1}{n}\sum_{i=1}^{n} 
\Bigl(f(X_T^{n,i,x})-f(X_T^{n,i,y}) \Bigr)
-\Bigl(\E\!\left[f(X_T^{n,i,x})-f(X_T^{n,i,y})\right]\Bigr)
\\
&\quad 
+  \E\!\left[ \left(f(X_T^{n,1,x})-f(X_T^{x})\right)-\left(f(X_T^{n,1,y})-f(X_T^{y})\right) \right]  
.
\end{split}\label{c02}\end{align}
Furthermore,
the Marcinkiewicz-Zygmund inequality (see \cite[Theorem~2.1]{Rio09}), the fact that $\exponentLP\in [2,\infty)$,  the triangle inequality, and  Jensen's inequality show that for all $n\in\N$ and all i.i.d.\ 
 integrable random variables $\mathfrak{X}_1,\mathfrak{X}_2,\ldots,\mathfrak{X}_n\colon\Omega\to\R$ it holds that
$
\left(\E\!\left[ \left\lvert \sum_{k=1}^{n}(\mathfrak{X}_k-\E[\mathfrak{X}_k])\right\rvert^p\right]\right)^{\nicefrac{1}{p}}\leq \sqrt{n}\sqrt{\exponentLP-1}\bigl(\E[\lvert\mathfrak{X}_1-\E[\mathfrak{X}_1]\rvert^p]\bigr)^{\nicefrac{1}{p}}
\leq 2\sqrt{n}\sqrt{\exponentLP-1}\bigl(\E[\lvert\mathfrak{X}_1\rvert^p]\bigr)^{\nicefrac{1}{p}}.
$ This, \eqref{c02}, the independence assumptions, the triangle inequality,
\eqref{c03}, and \eqref{c04} show that
\begin{align}
&\sup_{\substack{x,y\in\R^d,n\in\N\colon\\ x\neq y}}
\left[\sqrt{n}
\left(
\E\!\left[\left\lvert
\frac{1}{n}\sum_{i=1}^{n} 
\frac{\Bigl(f(X_T^{n,i,x})-\E [f(X_T^{x})]\Bigr)
-\Bigl(f(X_T^{n,i,y})-\E [f(X_T^{y})]\Bigr)}{\lVert x-y\rVert(1+\lVert x\rVert+\lVert y\rVert )}\right\rvert^p\right]\right)^{\nicefrac{1}{p}}\right]\nonumber
\\
&
\leq \sup_{\substack{x,y\in\R^d,n\in\N\colon\\ x\neq y}}\left[2\sqrt{p-1}\left(\E\!\left[\left\lvert
 \frac{
f(X_T^{n,1,x})
-f(X_T^{n,1,y})}{\lVert x-y\rVert(1+\lVert x\rVert+\lVert y\rVert )}\right\rvert^p\right]\right)^{\nicefrac{1}{p}}\right]\nonumber
\\
&\quad 
+\sup_{\substack{x,y\in\R^d,n\in\N\colon\\ x\neq y}} \frac{\sqrt{n}\left(\E\!\left[\left\lvert \left(f(X_T^{n,1,x})-f(X_T^{x})\right)-\left(f(X_T^{n,1,y})-f(X_T^{y})\right) \right\rvert^p\right]\right)^{\nicefrac{1}{p}}}{\lVert x-y\rVert(1+\lVert x\rVert+\lVert y\rVert )}<\infty
.
\end{align}
This completes the proof of \cref{c01}.
\end{proof}

\paragraph*{Acknowledgement}
This work has been
funded by the Deutsche Forschungsgemeinschaft (DFG, German Research Foundation) through
the research grant HU1889/7-2.

{\small

\bibliographystyle{acm}
\bibliography{bibfile}
}

\end{document}